 \documentclass[11pt,reqno]{amsart}
\usepackage[english]{babel}
\usepackage[letterpaper,top=2cm,bottom=2cm,left=3cm,right=3cm,marginparwidth=1.75cm]{geometry}

\usepackage{multirow}        
\usepackage{multicol}        
\usepackage{subfigure}        
\usepackage{graphicx}
\usepackage{amsfonts}
\usepackage{bbold}
\usepackage{diagbox}
\usepackage[colorlinks=true, allcolors=blue]{hyperref}
\usepackage{mathrsfs}
\usepackage{indentfirst}
\usepackage{amssymb,amsmath,amsthm}
\usepackage{epsfig}
\usepackage{appendix}
\usepackage{lineno}
\usepackage{enumerate}
\usepackage{comment}
\usepackage{subeqnarray}
\usepackage{cases}
\usepackage{stmaryrd}
\usepackage{color}
\usepackage{mathrsfs}
\usepackage{tikz}
\usepackage[update,prepend]{epstopdf}
\usepackage{booktabs}
\usepackage[linesnumbered,ruled,lined]{algorithm2e}
\usepackage{epsfig,graphics,bm,bbm}
\usepackage[numbers,sort&compress]{natbib}
\usepackage{cellspace} 
\numberwithin{equation}{section}
\newcommand{\bq}{\begin{equation}}
\newcommand{\eq}{\end{equation}}
\def\beq{\begin{equation*}}
\def\eeq{\end{equation*}}
\def\br{\begin{eqnarray}}
\def\er{\end{eqnarray}}
\def\brr{\bq\begin{array}{r@{}l}}
\def\err{\end{array}\eq}
\def\bry{\beq\begin{array}{r@{}l}}
\def\ery{\end{array}\eeq}
\def\dt{\Delta t}
\def\x{{\boldsymbol x}}
\def\n{{\boldsymbol n}}

\def\R{{\mathbb R}}
\def\Z{{\mathbb Z}}

\newcommand{\dps}{\displaystyle}
\newtheorem{theorem}{\indent Theorem}[section]

\newtheorem{remark}{\indent Remark}[section]

\newtheorem{example}{\indent Example}[section]

\newcommand{\ba}{\begin{array}}\newcommand{\ea}{\end{array}}
\newcommand{\be}{\begin{eqnarray}}\newcommand{\ee}{\end{eqnarray}}
\newcommand{\bex}{\begin{eqnarray*}}
\newcommand{\eex}{\end{eqnarray*}}

\date{\today}

\begin{document}

\title[A novel scheme for the dendritic crystal growth model]
{A fully-decoupled arbitrarily high-order time-stepping scheme based on matrix diagonalization for the anisotropic phase-field dendritic crystal growth model
}
\author{Weiwen Wang$^{1}$
\quad
Shaoqin Zheng$^{2,*}$}
\thanks{\hskip -12pt
${}^{1}$School of Mathematics and Statistics, Guangdong University of Technology, Guangdong, Guangzhou, 510520, China.\\
${}^{2}$Institute of Natural Sciences, Shanghai Jiao Tong University, Shanghai, 200240, China\\
${}^{*}$Corresponding author. This research is partially supported by China Postdoctoral Science Foundation No. 2024M761933.\\
Email:  wwwang@gdut.edu.cn (W.~Wang),  sqzheng@sjtu.edu.cn (S.~Zheng)}

\keywords {Dendritic crystal growth, Fully-decoupled, Arbitrarily high-order, Runge-Kutta methods,  Unconditional energy stability}
\subjclass[2020]{74N05, 65M12, 65M70, 65L06}

\date {\today}
\maketitle

\begin{abstract}
We propose a fully-decoupled arbitrarily high-order time-stepping scheme for the anisotropic phase-field dendritic crystal growth model. The scheme combines an auxiliary-variable formulation with algebraically stable Runge-Kutta methods and satisfies a discrete energy dissipation law. To address the computational bottleneck arising from the coupled linear system in existing high-order schemes, a matrix diagonalization technique is introduced to transform the coupled linear elliptic system into a set of independent constant-coefficient elliptic equations. The resulting equations can be solved separately and in parallel, thereby improving computational efficiency. Numerical experiments in both two and three dimensions are presented to verify the convergence, energy stability, and efficiency of the proposed scheme. Comparisons with the original coupled formulation demonstrate the effectiveness of the matrix diagonalization strategy, while additional tests illustrate the advantages of high-order temporal discretizations. Simulations under different anisotropy coefficients, latent heat parameters, rotation angles, and initial nucleus shapes are also presented to investigate their effects on dendritic morphology.

\end{abstract}

\section{Introduction}\label{sec1}
\setcounter{equation}{0}
Efficient numerical simulation of phase-field models remains a challenging task in scientific computing. The difficulties mainly stem from nonlinear free-energy terms, multiscale interfacial dynamics, and strong couplings among physical variables. Preserving the energy dissipation property is important for capturing the long-time behavior of phase-field models, while high-order time discretizations are useful for improving accuracy and reducing computational cost. Therefore, the development of numerical methods that combine high-order accuracy, energy stability, and computational efficiency remains an important research topic.

Among various phase-field models, the anisotropic dendritic crystal growth model has attracted considerable attention. Dendritic patterns are commonly observed in snowflake formation, frost growth, metal solidification, and crystallization in supersaturated solutions \cite{honjo1982quantitative,glicksman1976dendritic,dougherty1988steady,ramirez2005examination}. They arise from the interaction between thermal diffusion and anisotropic interfacial effects during crystal growth and solidification processes. To describe the evolution of such interfaces, the phase-field method \cite{halperin1974renormalization,kobayashi1993modeling,1985Diffuse} introduces a continuous function $\phi$ in both time and space to distinguish the solid and liquid phases. The function takes distinct values in each phase and varies smoothly across the interfacial region. As a result, the interface is represented implicitly through the variation of $\phi$, eliminating the need for explicit interface tracking.
The governing equations of the phase-field model are typically derived from a free-energy functional through a variational principle, which ensures thermodynamic consistency of the system \cite{marinozzi1996phase,1998Universal,mullis2003study,demange2017phase}. The resulting model consists of an anisotropic Allen-Cahn equation coupled with a heat transfer equation through a latent heat term \cite{karma1998}.
Numerically, the small interfacial width introduces severe stiffness into the phase equation, anisotropic effects may lead to large oscillations, and the latent heat term creates a strong nonlinear coupling between the phase field and temperature. These features make the construction of robust high-order solvers particularly challenging.

To address these challenges, a variety of numerical approaches have been developed over the past two decades. Nevertheless, many existing schemes are nonlinear and require iterative solvers, while others fail to preserve the energy dissipation property \cite{li2011fast,provatas1998efficient,jeong2001phase}. This has motivated the introduction of the invariant energy quadratization (IEQ) and scalar auxiliary variable (SAV) approaches for constructing linear energy-stable schemes. Both frameworks introduce auxiliary variables to handle the nonlinear free-energy terms, allowing the construction of linear schemes that preserve the energy dissipation property. Several IEQ-based schemes have been proposed for anisotropic dendritic growth models \cite{zhao2017numerical,Yang2019}. These methods transform the nonlinear free-energy terms into quadratic forms and enable the construction of linear schemes. Subsequent work introduced stabilization and decoupling techniques to improve energy stability and reduce computational cost \cite{2019Zhang,2020Zhang}. Nevertheless, some formulations still require solving coupled systems with variable coefficients, while fully-decoupled versions are often restricted to low-order temporal discretizations.
The SAV approach has also been applied to anisotropic dendritic growth models \cite{yang2020efficient}, providing an alternative framework for constructing linear energy-stable schemes. Subsequent developments introduced multi-auxiliary-variable strategies, improved treatments of the coupling terms, and generalized auxiliary variable techniques with relaxation \cite{yang2021fully,Lmh2022,guo2024efficient}. These developments further improve computational efficiency and reduce the number of equations that need to be solved at each time step.

Despite these advances, most existing energy-stable schemes are limited to first- and second-order temporal accuracy. For long-time simulations, achieving satisfactory accuracy with low-order methods often requires relatively small time steps, which increases the computational cost. Recently, Wang and Xu \cite{wang2025class} proposed an arbitrarily high-order time-stepping scheme for the anisotropic phase-field dendritic growth model based on a time-dependent auxiliary variable and algebraically stable Runge-Kutta methods. The resulting scheme preserves the energy dissipation property and achieves arbitrarily high-order temporal accuracy. However, it requires solving a coupled linear elliptic system and several linear equations with constant coefficients. As the number of Runge-Kutta stages increases or a finer spatial discretization is employed, the cost of solving the coupled system becomes more significant. Therefore, the efficiency of the overall algorithm largely depends on the treatment of this coupled structure. 

To address this issue, we develop a fully-decoupled framework based on matrix diagonalization. The proposed approach transforms the original system into a set of independent elliptic equations, which can be solved separately and in parallel. The resulting scheme preserves the energy stability and high-order accuracy of the original method, while reducing the cost of solving the coupled system. 

The main contributions of this work are summarized as follows:
\begin{itemize}
	\item A fully-decoupled high-order time-stepping framework is proposed for the anisotropic phase-field dendritic crystal growth model by applying matrix diagonalization to the coupled linear system.
	\item The proposed scheme satisfies a discrete energy dissipation law and achieves arbitrary-order temporal accuracy through algebraically stable Runge-Kutta methods.
	\item Through matrix diagonalization, the coupled elliptic system is decomposed into a set of independent constant-coefficient elliptic equations that can be solved separately and in parallel.
	\item Numerical examples are presented to verify the convergence and efficiency of the proposed scheme. Comparisons with the original coupled formulation demonstrate the computational advantage of the matrix diagonalization strategy, while additional tests illustrate the benefits of high-order temporal discretizations.
	\item The effects of anisotropy coefficients, latent heat parameters, rotation angles, and initial nucleus shapes on dendritic morphology are investigated through numerical simulations.
\end{itemize}

The remainder of this paper is organized as follows. In Section \ref{sec2}, we briefly recall the anisotropic phase-field dendritic crystal growth model and present its energy dissipation properties. Section \ref{sec3} is devoted to the development of arbitrarily high-order numerical schemes for this model. The procedure begins with an equivalent reformulation based on auxiliary variables, followed by the construction of a first-order IMEX scheme with a decoupled solution algorithm. This framework is then generalized to arbitrarily high-order accuracy by means of the algebraically stable Runge-Kutta method, leading to a novel class of fully-decoupled and unconditionally energy-stable schemes. The spatial discretization is completed with a Fourier spectral method. Numerical experiments are presented in Section \ref{sec4} to demonstrate the accuracy and efficiency of our proposed method. Finally, the paper ends with some concluding remarks.

\section{The anisotropic phase-field dendritic crystal growth model}\label{sec2}
\setcounter{equation}{0}
In this section, we briefly recall the anisotropic phase-field dendritic crystal growth model introduced in \cite{karma1998}. Suppose $\Omega \subset \R^d$ is a smooth, open, bounded, connected domain with $d=2,3$, we introduce a scalar phase-field function, which serves as an order parameter, to label the liquid and solid phases, i.e.
\begin{equation*}\label{scalar_phi}
\phi(\x,t) = \begin{cases}
		1,\qquad \mbox{solid},\\
		-1,\quad \mbox{liquid}.
	\end{cases}
\end{equation*}

Consider the following total free energy, which includes the conformational entropy specifying the spatial anisotropy, the double-well nonlinear potential responsible for the separation of liquid and solid phases, and the thermal energy:
\begin{equation*}\label{aenergy}
\begin{aligned}
	E(\phi,T)&=E_{\phi}(\phi)+E_{T}(T)\\[2pt]
	&=\displaystyle{\int_{\Omega}\Big(\dfrac{1}{2}\kappa^2(\nabla\phi)|\nabla\phi|^2+\frac{1}{\varepsilon^2}F(\phi)\Big)d\x+\int_{\Omega}\dfrac{\lambda}{2\varepsilon K} T^2d\x}, 
\end{aligned}
\end{equation*}
where $\varepsilon, \lambda$ and $K$ are positive parameters representing, respectively, the width of smooth transition layer between solid and liquid, the linear kinetic coefficient, and the latent heat parameter that controls the speed of heat transfer along the interface.
The scaled temperature is denoted by $T(\x,t)$, and $F(\phi)=\frac{1}{4}(\phi^2-1)^2$ represents the double-well type Ginzburg-Landau potential.
The anisotropic effect is characterized by the function $\kappa(\nabla \phi)$,
which depends on the direction of the interface normal $\n=-\frac{\nabla \phi}{|\nabla \phi|}$.
In a two-dimensional system, $\kappa(\nabla \phi)$ typically takes the following form \cite{li2011fast,meca2013phase,karma1996phase}:
\begin{equation}\label{kappa}
	\kappa(\nabla \phi)=1+\epsilon_{4} \cos (m (\theta-\theta_0)),
\end{equation}
where $m$ is a model number of anisotropy, $\epsilon_{4}\in[0,1)$ is an anisotropic strength parameter, and $\theta=\arctan (\frac{\phi_{y}}{\phi_{x}})$ with $\theta_0$ denoting a given angle of rotation. 
Here, we provide some specific expressions for $\kappa(\nabla \phi)$ based on the phase variable $\phi$, which will be used later:

\begin{equation*}
\begin{aligned}
\kappa(\nabla \phi)=\begin{cases}
	1 - 3\epsilon_4 + 4\epsilon_4\dfrac{\phi_x^4 + \phi_y^4}{|\nabla \phi|^{4}}, &\mbox{for 2D},\ m=4,\ \theta_0=0^{\circ};\\[10pt]
	1- \epsilon_{4}\dfrac{\phi_{y}^{2}-\phi_{x}^{2}}{|\nabla \phi|^{2}}\big(8\dfrac{\phi_{x}^{4}+\phi_{y}^{4}}{|\nabla \phi|^{4}} -7\big), &\mbox{for 2D},\ m=6,\ \theta_0=0^{\circ};\\[10pt]
	1+ \epsilon_{4}\dfrac{\phi_{y}^{2}-\phi_{x}^{2}}{|\nabla \phi|^{2}}\big(8\dfrac{\phi_{x}^{4}+\phi_{y}^{4}}{|\nabla \phi|^{4}} -7\big), &\mbox{for 2D},\ m=6,\ \theta_0=90^{\circ};\\[10pt]
	1 - 3\epsilon_4 + 4\epsilon_4\dfrac{\phi_{x}^{4}+\phi_{y}^{4}+\phi_{z}^{4}}{|\nabla \phi|^{4}}, &\mbox{for 3D},\ m=4,\ \theta_0=0^{\circ}.
\end{cases}
\end{aligned}
\end{equation*}

By adopting the Allen-Cahn type ($L^2$-gradient flow) relaxation dynamics for dendritic crystal growth, the governing dynamical equations can be derived via the variational form (VF) method as follows:
\begin{subequations}\label{aeq}
	\begin{align}
		\nonumber
		\tau \phi_{t} &=-\dfrac{\delta E}{\delta \phi}-\dfrac{\lambda}{\varepsilon} h'(\phi) T \\
		&=\nabla \cdot\left(\kappa^{2}(\nabla \phi) \nabla \phi+\kappa(\nabla \phi)|\nabla \phi|^{2} \zeta(\phi)\right)-\dfrac{1}{\varepsilon^{2}} F'(\phi)-\dfrac{\lambda}{\varepsilon} h'(\phi) T, \label{aeq_1}\\
		T_{t} &=D \Delta T+K h'(\phi) \phi_{t}. \label{aeq_2}
	\end{align}
\end{subequations}
In the above system, $D$ is the constant diffusion rate of the temperature, and $\tau>0$ represents the mobility parameter, which, in this paper, is taken as a constant rather than a function of $\phi$. $\frac{\delta E}{\delta \phi}$ is the variational derivative of the total free energy with respect to $\phi$, and the function $h(\phi)=\frac{1}{5} \phi^{5}-\frac{2}{3} \phi^{3}+\phi$ is a phenomenological functional that maintains the minima at $\phi=\pm 1$ and accounts for the generation of latent heat. The definitions and parameter choices described above can be found in \cite{kobayashi1993modeling,warren1995prediction,karma1998}.
$\zeta(\phi)=\frac{\delta \kappa(\nabla \phi)}{\delta \phi}$ is the variational derivative of $\kappa(\nabla\phi)$. For the two-dimensional system, $\zeta(\phi)$ is given by \cite{kobayashi1993modeling}:
\beq
\zeta(\phi)=\dfrac{\delta \kappa(\nabla \phi)}{\delta \phi}=
-m \epsilon_4 \dfrac{\sin \left(m\left(\theta-\theta_0\right)\right)}{|\nabla \phi|^2}\left(-\phi_y,\phi_x\right).
\eeq
Some specific expressions of $\zeta(\phi)$ read as follows:

\begin{equation*}
\begin{aligned}
	\zeta(\phi)=\begin{cases}
	\dfrac{16\epsilon_4}{|\nabla\phi|^6}\left(\phi_x^3\phi_y^2-\phi_x\phi_y^4,\ \phi_x^2\phi_y^3-\phi_x^4\phi_y\right), &\hspace{-30mm}\mbox{for 2D},\ m=4,\ \theta_0=0^{\circ};\\[10pt]
	\dfrac{12\epsilon_4}{|\nabla\phi|^8}\left(3\phi_x^2\phi_y-\phi_y^3\right)\left(\phi_x^3-3\phi_x\phi_y^2\right)\left(\phi_y,-\phi_x\right), &\hspace{-30mm}\mbox{for 2D},\ m=6,\ \theta_0=0^{\circ};\\[10pt]
	\dfrac{12\epsilon_4}{|\nabla\phi|^8}\left(3\phi_x^2\phi_y-\phi_y^3\right)\left(\phi_x^3-3\phi_x\phi_y^2\right)\left(-\phi_y,\phi_x\right), &\hspace{-30mm}\mbox{for 2D},\ m=6,\ \theta_0=90^{\circ};\\[10pt]
	\dfrac{16\epsilon_{4}}{|\nabla \phi|^{6}}\left(\phi_x^3\phi_y^2 + \phi_x^3\phi_z^2 - \phi_x\phi_y^4 - \phi_x\phi_z^4,\ \phi_x^2\phi_y^3 + \phi_y^3\phi_z^2 - \phi_x^4\phi_y - \phi_y\phi_z^4\right.,\\[10pt]
	\hspace{16mm} \left.\phi_x^2\phi_z^3 + \phi_y^2\phi_z^3 - \phi_x^4\phi_z - \phi_y^4\phi_z\right), &\hspace{-30mm}\mbox{for 3D},\ m=4,\ \theta_0=0^{\circ}.
\end{cases}
\end{aligned}
\end{equation*}

Throughout this work, periodic boundary conditions are assumed for simplicity. The same analysis remains valid under homogeneous Neumann boundary conditions.

Taking the $L^2$-inner product of \eqref{aeq_1} and \eqref{aeq_2} with $\phi_t$ and $\frac{\lambda}{\varepsilon K}T$ respectively, and using the integration by parts, we derive the following energy dissipation law under the given boundary conditions:
\bq\label{a_energy1}
\dfrac{d}{d t} E(\phi, T)=-\tau\left\| \phi_{t}\right\|^{2}-\frac{\lambda D}{\varepsilon K}\|\nabla T\|^{2} \leq 0,\nonumber
\eq
where $\|\cdot\|$ denotes the standard $L^2$-norm, and the right-hand side of the equation specifies the diffusion rate of the total free energy.

\section{Numerical schemes and decoupled technique}\label{sec3}
\setcounter{equation}{0}
This section is devoted to the development of arbitrarily high-order numerical schemes for the anisotropic dendrite growth model \eqref{aeq}. The procedure is outlined as follows. First, the original system is reformulated by introducing an auxiliary variable. On the basis of this reformulation, a first-order implicit-explicit (IMEX) scheme is constructed, with a detailed exposition of its efficient decoupled solution algorithm. The aforementioned framework is then generalized to achieve arbitrarily high-order temporal accuracy, which is realized by employing algebraically stable Runge-Kutta method within an autonomous ordinary differential equation (ODE) setting. Subsequently, a corresponding decoupling algorithm for the resulting high-order schemes is presented. Finally, a Fourier spectral method is introduced for spatial discretization.

\subsection{Auxiliary variable reformulation}\label{sec31}
To construct energy-stable numerical schemes, we introduce the following time-dependent auxiliary variable \cite{Lmh2022,wang2025class}:
\begin{equation*}\label{ar}
	U(t)=\sqrt{E_1(\phi)},
\end{equation*}
where
$$E_1(\phi)=\int_{\Omega}\left(\frac{1}{2}\left(\kappa^2(\nabla \phi)-C_1\right)|\nabla \phi|^2+\frac{1}{\varepsilon^2}\left(F(\phi)-\frac{C_2}{2} \phi^2\right)+C_0\right) d \x.$$
Here, $C_1$ (satisfying $0<C_1<(1-\epsilon_{4})^2$) and $C_2$ are positive constants chosen to ensure that the integral term $\int_{\Omega}\left(\frac{1}{2}\left(\kappa^2(\nabla \phi)-C_1\right)|\nabla \phi|^2+\frac{1}{\varepsilon^2}\left(F(\phi)-\frac{C_2}{2} \phi^2\right)\right) d \x$ remains bounded from below. 
This follows from the fact that $\kappa^2(\nabla\phi)\geq(1-\epsilon_{4})^2$ and $F(\phi)$ is a quartic potential with positive leading coefficient.
Therefore, one can always select a sufficiently large constant $C_0$ to guarantee the positivity of $E_1(\phi)$. Moreover, the introduction of $C_1$ and $C_2$ precisely serves to balance the explicit treatment of anisotropic coefficients and the nonlinear potential function, which plays a crucial role in establishing the $H^1$-stability of the numerical solution for the phase-field variable; see Theorem \ref{th31} and \ref{th32}.

Based on the auxiliary variable $U(t)$, the original equations \eqref{aeq} can be reformulated into the following equivalent form:
\begin{subequations}\label{aeq_sav}
\begin{align}
	&\phi_t=\dfrac{1}{\tau}\mu,\label{aeq_sav_1}\\
	&\mu=C_1\Delta\phi-\dfrac{C_2}{\varepsilon^2}\phi-UG(\phi)-UH(\phi,T),\label{aeq_sav_2}\\
	&U_t=\dfrac{1}{2}\int_{\Omega}G(\phi)\phi_td \x,\label{aeq_sav_3}\\[3pt]
	&T_t=D\Delta T+ UP(\phi,\mu),\label{aeq_sav_4}
\end{align}
\end{subequations}
where
$$G(\phi)=\dfrac{g(\phi)}{\sqrt{E_1(\phi)}},\quad H(\phi,T)=\dfrac{\lambda h'(\phi) T}{\varepsilon\sqrt{E_1(\phi)}},\quad P(\phi,\mu)=\dfrac{Kh'(\phi)\mu}{\tau\sqrt{E_1(\phi)}},$$
$$g(\phi)=-\nabla \cdot\left(\left(\kappa^2(\nabla \phi)-C_1\right) \nabla \phi+\kappa(\nabla \phi)|\nabla \phi|^2 \zeta(\phi)\right)+\frac{1}{\varepsilon^2}\left(F'(\phi)-C_2 \phi\right).$$
The initial conditions for \eqref{aeq_sav} read as:
\begin{equation}\label{aeq_sav_ini}
\phi|_{t=0}=\phi_0,\quad T|_{t=0}=T_0,\quad U|_{t=0}=\sqrt{E_1(\phi_0)}.
\end{equation}

At the continuous level, the new system \eqref{aeq_sav} is strictly equivalent to the original system \eqref{aeq}, since the auxiliary variable $U(t)$ can be uniquely recovered from \eqref{aeq_sav_3} together with	the initial conditions \eqref{aeq_sav_ini}.
Moreover, by taking the $L^2$-inner products of \eqref{aeq_sav_1} with $\phi_t$, \eqref{aeq_sav_2} with $\phi_t$, \eqref{aeq_sav_3} with $2U$, and \eqref{aeq_sav_4} with $\frac{\lambda}{\varepsilon K}T$ respectively, we obtain the following energy dissipation law:
\begin{equation*}\label{asav_energy}
\dfrac{d}{d t} \tilde{E}(\phi, T,U)=-\tau\left\| \phi_{t}\right\|^{2}-\frac{\lambda D}{\varepsilon K}\|\nabla T\|^{2} \leq 0,
\end{equation*}
where the corresponding energy functional is defined as
\be\label{asav_E} \tilde{E}(\phi,T,U)=\displaystyle{\int_{\Omega}\Big(\frac{C_1}{2}|\nabla \phi|^2+\frac{C_2}{2 \varepsilon^2} \phi^2+\frac{\lambda}{2 \varepsilon K} T^2-C_0\Big)d\x}+U^2.\nonumber
\ee

This reformulated energy law serves as the foundation for the construction and analysis of the energy-stable schemes developed in the subsequent sections.

\subsection{Temporal discretization}\label{sec32}
In this subsection, we develop temporal discretizations for the reformulated system \eqref{aeq_sav}-\eqref{aeq_sav_ini}. We first construct a first-order IMEX scheme and then extend it to arbitrarily high-order accuracy by means of algebraically stable Runge--Kutta methods.

\subsubsection{A first-order scheme}\label{sec321}
Let $t_n$ $(n=0,1,\ldots)$ be a uniform partition of the time interval with step size $\dt = t_{n+1} - t_n$. The resulting first-order IMEX scheme is given by
\begin{equation}\label{ark_1st}
\left\{\begin{array}{l}
	\dps \phi_{n+1}=\phi_{n}+\dfrac{\dt}{\tau}\mu_{n+1}, \\[4pt]
	\dps \mu_{n+1}=C_1\Delta\phi_{n+1}-\dfrac{C_2}{\varepsilon^2}\phi_{n+1}-U_{n+1}G(\phi_n)-U_{n+1}H(\phi_n,T_n), \\[4pt]
	\dps U_{n+1}=U_{n}+\dfrac{\dt}{2\tau}\big(G(\phi_{n}), \mu_{n+1}\big)+\dfrac{\dt}{2\tau}\big(H(\phi_{n}, T_{n}),\mu_{n+1}\big)-\dfrac{\dt\lambda}{2\varepsilon K}\big(P(\phi_{n}, \mu_{n}),T_{n+1}\big), \\[12pt]
	\dps T_{n+1}=T_{n}+\dt D\Delta T_{n+1}+\dt U_{n+1}P(\phi_n,\mu_{n}).
\end{array}\right.
\end{equation}

In the above discretization, the operators $G$, $H$, and $P$ are treated explicitly, while the remaining terms are handled implicitly. The last two terms in the third equation of \eqref{ark_1st} are introduced to ensure unconditional energy stability. Although they are discretized differently, they approximate the same continuous quantity and therefore preserve the first-order consistency of the scheme:
\begin{equation*}
	\begin{aligned}
		&\hspace{8mm}\dfrac{1}{2\tau}\big(H(\phi_{n}, T_{n}), \mu_{n+1}\big)-\dfrac{\lambda}{2\varepsilon K}\big(P(\phi_{n}, \mu_{n}),T_{n+1}\big)\\
		&=\dfrac{\lambda}{2\varepsilon\tau \sqrt{E_1(\phi_{n})}}\big((h'(\phi_{n})T_{n},\mu_{n+1})-(h'(\phi_{n})\mu_{n},T_{n+1}) \big)\\
		&=\dfrac{\lambda}{2\varepsilon\tau \sqrt{E_1(\phi_{n})}}\big((h'(\phi_{n})T_{n},\mu_{n+1})-(h'(\phi_{n})T_{n},\mu_{n})+(h'(\phi_{n})\mu_{n},T_{n})-(h'(\phi_{n})\mu_{n},T_{n+1}) \big)\\[5pt]
		&=O(\dt).
	\end{aligned}
\end{equation*}	

The following theorem shows that the scheme preserves a discrete energy dissipation law.

\begin{theorem}\label{th31}
In the absence of external forces, the scheme \eqref{ark_1st} is unconditionally stable, which implies that the following discrete energy law holds:
\begin{equation*}\label{mod_energy}
	\mathcal{E}^{n+1}-\mathcal{E}^{n} \leq 0, \qquad n=0,1,\ldots,
\end{equation*}
where the discrete energy at $t_n$ is defined as
\begin{equation*}
	\mathcal{E}^n=\frac{C_1}{2}\left\|\nabla \phi_n\right\|^2+\frac{C_2}{2 \varepsilon^2}\left\|\phi_n\right\|^2+\frac{\lambda}{2 \varepsilon K}\left\|T_n\right\|^2+\left|U_{n}\right|^2.
\end{equation*}
\end{theorem}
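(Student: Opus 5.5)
Our approach is the standard energy method for the scheme \eqref{ark_1st}: test each of the four equations with a suitable function, add the results so that the explicitly treated coupling terms cancel, and use the identity $2(a-b,a)=|a|^2-|b|^2+|a-b|^2$ to produce telescoping differences plus nonnegative numerical dissipation.

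\textbf{Step 1: the phase equation.} Take the $L^2$ inner product of the first equation of \eqref{ark_1st} with $\mu_{n+1}$, which gives $(\phi_{n+1}-\phi_n,\mu_{n+1})=\frac{\dt}{\tau}\|\mu_{n+1}\|^2$.

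\textbf{Step 2: the chemical potential.} Take the inner product of the second equation with $-(\phi_{n+1}-\phi_n)$. Integrating by parts under periodic boundary conditions, the terms $C_1\Delta\phi_{n+1}$ and $-\frac{C_2}{\varepsilon^2}\phi_{n+1}$ yield
\begin{equation*}
\frac{C_1}{2}\big(\|\nabla\phi_{n+1}\|^2-\|\nabla\phi_n\|^2+\|\nabla(\phi_{n+1}-\phi_n)\|^2\big)+\frac{C_2}{2\varepsilon^2}\big(\|\phi_{n+1}\|^2-\|\phi_n\|^2+\|\phi_{n+1}-\phi_n\|^2\big).
\end{equation*}
The remaining terms are $U_{n+1}\big(G(\phi_n)+H(\phi_n,T_n),\phi_{n+1}-\phi_n\big)$. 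By the first equation, $\phi_{n+1}-\phi_n=\frac{\dt}{\tau}\mu_{n+1}$, so these become $\frac{\dt}{\tau}U_{n+1}\big(G(\phi_n)+H(\phi_n,T_n),\mu_{n+1}\big)$.

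\textbf{Step 3: the auxiliary variable and the temperature.} Multiply the third equation by $2U_{n+1}$. This gives $|U_{n+1}|^2-|U_n|^2+|U_{n+1}-U_n|^2$ on the left. On the right it produces exactly the terms $\frac{\dt}{\tau}U_{n+1}(G+H,\mu_{n+1})$, which cancel those from Step 2, together with $-\frac{\dt\lambda}{\varepsilon K}U_{n+1}(P(\phi_n,\mu_n),T_{n+1})$. Next take the inner product of the fourth equation with $\frac{\lambda}{\varepsilon K}T_{n+1}$. This gives
\begin{equation*}
\frac{\lambda}{2\varepsilon K}\big(\|T_{n+1}\|^2-\|T_n\|^2+\|T_{n+1}-T_n\|^2\big)=-\frac{\lambda D\dt}{\varepsilon K}\|\nabla T_{n+1}\|^2+\frac{\dt\lambda}{\varepsilon K}U_{n+1}(P(\phi_n,\mu_n),T_{n+1}),
\end{equation*}
whose last term cancels the leftover $P$-term from the $U$ equation.

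\textbf{Step 4: summing.} Adding all contributions, every coupling term cancels, and we obtain
\begin{equation*}
\mathcal{E}^{n+1}-\mathcal{E}^n=-\frac{\dt}{\tau}\|\mu_{n+1}\|^2-\frac{\lambda D\dt}{\varepsilon K}\|\nabla T_{n+1}\|^2-R_n\le 0,
\end{equation*}
where $R_n\ge0$ collects the squared-increment terms.

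The main point to check carefully is the bookkeeping of signs and factors: the coefficients $\frac{\dt}{2\tau}$ and $\frac{\dt\lambda}{2\varepsilon K}$ in the $U$-update, once doubled by the multiplier $2U_{n+1}$, must match exactly the terms produced by the $\mu$ and $T$ equations. This is precisely why those terms were added to the scheme. The cancellation works because $P$ and $H$ use the same explicit factor $U_{n+1}$ and the same frozen arguments $\phi_n, T_n, \mu_n$.
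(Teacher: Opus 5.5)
Your proposal is correct and is essentially the paper's proof. The paper tests the first two equations with $\tau(\phi_{n+1}-\phi_n)/\dt^2$ and $(\phi_{n+1}-\phi_n)/\dt$ and uses the first equation to eliminate $\mu_{n+1}$ from the $U$-equation; you test with $\mu_{n+1}$ and $-(\phi_{n+1}-\phi_n)$ and convert in the opposite direction, which is the same bookkeeping up to a factor of $\dt$ and leads to the same cancellations and the same final dissipation identity.
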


\begin{proof}
By taking the inner product of the first two equations in \eqref{ark_1st} with $\tau\frac{\phi_{n+1}-\phi_{n}}{\dt^2}$ and $\frac{\phi_{n+1}-\phi_{n}}{\dt}$ respectively, and then summing the resulting equations yields
\begin{equation}\label{th31_e1}
\begin{aligned}
&\hspace{8mm}\tau\|\dfrac{\phi_{n+1}-\phi_{n}}{\dt}\|^2+\dfrac{U_{n+1}}{\dt}\big(G(\phi_n)+H(\phi_n,T_n),\phi_{n+1}-\phi_n\big)\\[2pt]
&+\dfrac{C_1}{2\dt}(\|\nabla\phi_{n+1}\|^2-\|\nabla\phi_{n}\|^2+\|\nabla\phi_{n+1}-\nabla\phi_{n}\|^2)\\[4pt]
&+\dfrac{C_2}{2\varepsilon^2\dt}(\|\phi_{n+1}\|^2-\|\phi_{n}\|^2+\|\phi_{n+1}-\phi_{n}\|^2)=0,
\end{aligned}
\end{equation}		
in which we have employed the following inequality:
\begin{equation}\label{q1}
2(a_{k+1},a_{k+1}-a_{k})=|a_{k+1}|^{2}-|a_{k}|^{2}+|a_{k+1}-a_{k}|^{2}.
\end{equation}
Eliminating $\mu_{n+1}$ from the third equation by means of the first in \eqref{ark_1st}, and subsequently multiplying the resulting expression by $\frac{2U_{n+1}}{\dt}$, we obtain
\begin{equation}\label{th31_e2}
\dfrac{2U_{n+1}(U_{n+1}-U_{n})}{\dt}=\dfrac{U_{n+1}}{\dt}\big(G(\phi_n)+H(\phi_n,T_n),\phi_{n+1}-\phi_n\big)-\dfrac{\lambda U_{n+1}}{\varepsilon K}\big(P(\phi_{n}, \mu_{n}),T_{n+1}\big).
\end{equation}
Furthermore, taking the inner product of the fourth equation in \eqref{ark_1st} with $\frac{\lambda}{\varepsilon K \dt}T_{n+1}$ gives
\begin{equation}\label{th31_e3}
	\big(\dfrac{T_{n+1}-T_{n}}{\dt},\dfrac{\lambda}{\varepsilon K}T_{n+1}\big)=-\dfrac{\lambda D}{\varepsilon K}\|\nabla T_{n+1}\|^{2}+\dfrac{\lambda U_{n+1}}{\varepsilon K}\big(P(\phi_{n}, \mu_{n}),T_{n+1}\big).
\end{equation}
Finally, combining \eqref{th31_e1}, \eqref{th31_e2} and \eqref{th31_e3}, applying \eqref{q1} once again, and omitting some non‑essential positive terms, we arrive at the desired result and thus complete the proof.

\end{proof}

\begin{remark}\label{re31}
A key advantage of the first-order scheme is that it allows a fully-decoupled implementation. Specifically, the fourth equation in \eqref{ark_1st} yields an explicit expression for $T_{n+1}$ in terms of $U_{n+1}$, while the first two equations allow $\phi_{n+1}$ and $\mu_{n+1}$ to be written explicitly as functions of $U_{n+1}$. Substituting these relations into the third equation reduces the system to solving a single equation for $U_{n+1}$ alone. Once $U_{n+1}$ has been computed, the remaining variables $\phi_{n+1}$, $\mu_{n+1}$, and $T_{n+1}$ can be derived explicitly in turn.

\end{remark}

This first-order scheme provides a general framework that can be readily extended to attain arbitrarily high-order accuracy, as detailed in the following subsection.

\subsubsection{Arbitrarily high-order schemes}\label{sec322}
To achieve arbitrarily high-order temporal accuracy while preserving energy stability, we employ algebraically stable Runge-Kutta methods. For completeness, we first recall the general Runge-Kutta framework for the following autonomous ODE:
\begin{equation}\label{ODE}
	\left\{\begin{array}{l}
		\boldsymbol{u}_t=\boldsymbol{F}(\boldsymbol{u}), \quad \boldsymbol{F}: \mathbb{R}^N \to \mathbb{R}^N,\\
		\boldsymbol{u}|_{t=0} = \boldsymbol{u}_0,
	\end{array}\right.
\end{equation}
where $\boldsymbol{u}_t$ denotes the time derivative. 
A $q$-stage Runge-Kutta method updates the solution of \eqref{ODE} from $t_{n}$ to $t_{n+1}$ by
\begin{equation*}
	\left\{\begin{array}{l}
		\dps \boldsymbol{u}_{ni}=\boldsymbol{u}_{n}+\dt \sum_{j=1}^{q} \alpha_{i j}  \boldsymbol{F}(\boldsymbol{u}_{nj}), \quad i=1, \ldots, q, \\[10pt]
		\dps \boldsymbol{u}_{n+1} = \boldsymbol{u}_n+\dt \sum_{i=1}^{q} \beta_{i} \boldsymbol{F}(\boldsymbol{u}_{ni}),
	\end{array}\right.
\end{equation*}
with the internal Runge-Kutta nodes $t_{ni}=t_{n}+\gamma_{i} \dt, \ i=1, \ldots, q$. This discretization can be represented by a Butcher tableau:
\bq\label{tableau}
\begin{array}{c|c}
	\mathcal{A} & \boldsymbol{\gamma} \\ \hline
	\vspace{-0.25cm} \\
	\boldsymbol{\beta}^T &
\end{array}=
\begin{array}{ccc|c}
	\alpha_{11} & \ldots & \alpha_{1 q} & \gamma_1 \\
	\vdots & & \vdots & \vdots \\
	\alpha_{q 1} & \ldots & \alpha_{q q} & \gamma_q \\
	\hline \beta_1 & \ldots & \beta_q &
\end{array}.
\eq
Here $\mathcal{A} = (\alpha_{ij}) \in \mathbb{R}^{q \times q}$. The vectors $\boldsymbol{\beta} = (\beta_1, \beta_2, \ldots, \beta_q)$ and $\boldsymbol{\gamma}^T = (\gamma_1, \gamma_2, \ldots, \gamma_q)$ are usual Butcher notations, where $\gamma_i = \sum_{j=1}^q \alpha_{ij}, \ i=1, \ldots, q.$

In this study, we adopt the algebraically stable Runge-Kutta method \cite{0Solving,Akrivis19,Gong2019energy,du2019analysis,wang2023stable}, which requires the coefficients in the tableau \eqref{tableau} to satisfy the following conditions:
\begin{equation}
	\begin{minipage}{0.8\textwidth} 
		\begin{enumerate}
			\item The coefficient matrix $\mathcal{A} = (\alpha_{ij})_{i,j=1,\dots,q}$ is non-singular.
			\item $\beta_i > 0$ for all $i = 1, \dots, q$.
			\item $\gamma_i \neq \gamma_j$ whenever $i \neq j$.
			\item The symmetric matrix $M \in \mathbb{R}^{q \times q}$ defined by
			\[
			m_{ij} := \beta_i \alpha_{ij} + \beta_j \alpha_{ji} - \beta_i \beta_j, \quad i,j = 1, \dots, q,
			\]
			is positive semidefinite.
			\item For $\ell=1, \ldots, p,\ (p\geq q)$,
			\[
			\sum_{i=1}^q \beta_i \gamma_i^{\ell-1} = \frac{1}{\ell}.
			\]
			\item For $\ell=1, \ldots, q$ and $i = 1, \dots, q$,
			\[
			\sum_{j=1}^q \alpha_{ij} \gamma_j^{\ell-1} = \frac{\gamma_i^\ell}{\ell}.
			\]
		\end{enumerate}
	\end{minipage}
	\label{alge} 
\end{equation}

Among the above conditions, the first four are essential for establishing the discrete energy dissipation law, while the remaining two are used in the consistency and convergence analysis. In Section \ref{sec4}, we will utilize $q$-stage Runge-Kutta methods ($q=2,3,4,5$) derived from Gaussian quadrature formulas. These methods satisfy all the above conditions, with their corresponding Butcher tableaux detailed in the Appendix.

Building upon this framework, we extend the first-order scheme \eqref{ark_1st} to arbitrarily high-order accuracy by applying the aforementioned Runge-Kutta method to the reformulated system \eqref{aeq_sav}.
To this end, we introduce the interpolation operator $\Pi_{n-1}$. Let $\mathbb{P}^{q-1}$ denote the space of polynomials of degree at most $q-1$. For any function $v$, $\Pi_{n-1}v(t)\in\mathbb{P}^{q-1}$ stands for its Lagrange interpolant matching $v$ at the nodes $t_{n-1,i}$, satisfying
$$
\big(\Pi_{n-1} v\big)(t_{n-1, i})=v(t_{n-1, i}), \quad i=1, \ldots, q.
$$
Similarly, for the discrete data of the internal stages $v_{n-1, i}$, $i=1,\ldots,q$, we denote by $\big(\Pi_{n-1} v_{n-1}\big)(t)$ the polynomial in $\mathbb{P}^{q-1}$ such that
\be\label{extrap}
\big(\Pi_{n-1} v_{n-1}\big)(t_{n-1, i})=v_{n-1, i}, \quad i=1, \ldots, q.\nonumber
\ee
For simplicity, we use the abbreviation $\Pi_{n-1} v_{n i}:=\big(\Pi_{n-1} v_{n-1}\big)(t_{n i})$ hereafter.

Assuming that the nodal values $\phi_n, T_{n}, U_{n}$ and the internal stage values $\phi_{n-1, i}$, $T_{n-1, i}$, $U_{n-1, i}$, $i=1, \ldots, q$ are already given, we propose the following $q$-stage time-stepping scheme for the reformulated system \eqref{aeq_sav}:
\begin{equation}\label{ark_phi}
	\hspace{8.5mm}\begin{cases}
		\mu_{n i}=C_1\Delta \phi_{n i}-\dfrac{C_2}{\varepsilon^2}\phi_{n i}-U_{n i} G(\Pi_{n-1}  \phi_{n i})-U_{n i} H(\Pi_{n-1} \phi_{n i}, \Pi_{n-1} T_{n i}),\\[2pt]
		\dps\phi_{n i}=\phi_{n}+\dfrac{\dt}{\tau } \sum_{j=1}^{q} \alpha_{i j} \mu_{n j},
	\end{cases}
\end{equation}
\begin{equation}\label{ark_r}
	\hspace{-14mm}\begin{cases}
		\dot{U}_{n i}=\dfrac{1}{2}\big(G(\Pi_{n-1}\phi_{n i}), \dfrac{1}{\tau}\mu_{n i}\big)+\dfrac{1}{2\tau}\big(H(\Pi_{n-1} \phi_{n i}, \Pi_{n-1} T_{n i}),\mu_{n i}\big)\\[10pt]
		\hspace{9.5mm}-\dfrac{\lambda}{2\varepsilon K}\big(P(\Pi_{n-1} \phi_{n i}, \Pi_{n-1} \mu_{n i}),T_{n i}\big), \\[2pt]
		\dps U_{n i}=U_{n}+\dt \sum_{j=1}^{q} \alpha_{i j} \dot{U}_{n j},
	\end{cases}
\end{equation}
\begin{equation}\label{ark_u}
	\hspace{-47.5mm}\begin{cases}
		\dot{T}_{n i}=D\Delta T_{n i}+U_{n i} P(\Pi_{n-1} \phi_{n i}, \Pi_{n-1} \mu_{n i}),\\[2pt]
		\dps T_{n i}=T_{n}+\dt \sum_{j=1}^{q} \alpha_{i j} \dot{T}_{n j}.
	\end{cases}
\end{equation}
Finally, the numerical solution at the next time level $t_{n+1}$ can be updated by
\begin{equation}\label{ark_n+1}
	\hspace{-72mm}\left\{\begin{array}{l}
		\dps \phi_{n+1}=\phi_{n}+\dfrac{\dt}{\tau} \sum_{i=1}^{q} \beta_{i} \mu_{n i}, \\
		\dps U_{n+1}=U_{n}+\dt \sum_{i=1}^{q} \beta_{i} \dot{U}_{n i}, \\
		\dps T_{n+1}=T_{n}+\dt \sum_{i=1}^{q} \beta_{i} \dot{T}_{n i}.
	\end{array}\right.
\end{equation}

The stability of the scheme presented above is established in the following theorem. 

\begin{theorem}\label{th32}
Let $(\phi_{n}, T_{n}, U_{n})$ be the solution of the discrete problem \eqref{ark_phi}-\eqref{ark_n+1} with the internal stage values $\phi_{n-1, i}, T_{n-1, i}$ known. Assume that the Runge-Kutta method \eqref{tableau} is algebraically stable and satisfies conditions \eqref{alge}, then the scheme admits a unique solution with $\phi_{n+1}, T_{n+1}, \phi_{ni}, T_{ni} \in H^{1}(\Omega),\ i=1, \ldots, q$ and $U_{n+1}\in \mathbb{R}$. Moreover, the following discrete energy law holds:
\begin{equation*}\label{aenergy3}
	\mathcal{E}^{n+1}-\mathcal{E}^{n} \leq 0, \qquad n=0,1,\ldots,
\end{equation*}
where the discrete energy at $t_n$ is defined as
\begin{equation*}\label{a_modenergy}
	\mathcal{E}^n=\frac{C_1}{2}\left\|\nabla \phi_n\right\|^2+\frac{C_2}{2 \varepsilon^2}\left\|\phi_n\right\|^2+\frac{\lambda}{2 \varepsilon K}\left\|T_n\right\|^2+\left|U_{n}\right|^2.
\end{equation*}
\end{theorem}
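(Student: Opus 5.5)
The proof has two parts: the energy law, obtained from the standard algebraically stable Runge--Kutta identity, and unique solvability, obtained by viewing the stage equations as a linear problem and using the energy identity to show the homogeneous problem has only the trivial solution.

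\emph{Energy law.} We write each component update in the form $v_{n+1}=v_n+\dt\sum_i\beta_i\dot v_{ni}$ and $v_{ni}=v_n+\dt\sum_j\alpha_{ij}\dot v_{nj}$. For $\phi$ the stage derivative is $\dot\phi_{ni}=\mu_{ni}/\tau$.

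For any inner-product-type quadratic form $\langle\cdot,\cdot\rangle$, algebraic stability gives the identity
\[
\langle v_{n+1},v_{n+1}\rangle-\langle v_n,v_n\rangle
=2\dt\sum_i\beta_i\langle v_{ni},\dot v_{ni}\rangle
-\dt^2\sum_{i,j}m_{ij}\langle\dot v_{ni},\dot v_{nj}\rangle .
\]
To verify it, expand $v_{n+1}=v_n+\dt\sum\beta_i\dot v_{ni}$ and replace $v_n$ by $v_{ni}-\dt\sum_j\alpha_{ij}\dot v_{nj}$. Since $M$ is positive semidefinite, the last sum is nonnegative.

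We apply the identity four times:
\begin{itemize}
\item to $\phi$ with the form $\frac{C_1}{2}\|\nabla\cdot\|^2+\frac{C_2}{2\varepsilon^2}\|\cdot\|^2$;
\item to $T$ with the form $\frac{\lambda}{2\varepsilon K}\|\cdot\|^2$;
\item to $U$ with $|\cdot|^2$.
\end{itemize}
Summing, we obtain $\mathcal{E}^{n+1}-\mathcal{E}^n\le\dt\sum_i\beta_i S_i$, where
\[
S_i=\tfrac{1}{\tau}\big(C_1(\nabla\phi_{ni},\nabla\mu_{ni})+\tfrac{C_2}{\varepsilon^2}(\phi_{ni},\mu_{ni})\big)+\tfrac{\lambda}{\varepsilon K}(T_{ni},\dot T_{ni})+2U_{ni}\dot U_{ni}.
\]

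We then substitute the stage equations into $S_i$:
\begin{itemize}
\item The first equation of \eqref{ark_phi} gives $-C_1(\nabla\phi_{ni},\nabla\mu_{ni})-\frac{C_2}{\varepsilon^2}(\phi_{ni},\mu_{ni})=\|\mu_{ni}\|^2+U_{ni}(G+H,\mu_{ni})$, with $G,H$ evaluated at the extrapolated arguments. Hence the $\phi$-part of $S_i$ equals $-\frac1\tau\|\mu_{ni}\|^2-\frac{U_{ni}}{\tau}(G+H,\mu_{ni})$.
\item The $T$-equation contributes $-\frac{\lambda D}{\varepsilon K}\|\nabla T_{ni}\|^2+\frac{\lambda U_{ni}}{\varepsilon K}(P,T_{ni})$.
\item By \eqref{ark_r}, $2U_{ni}\dot U_{ni}=\frac{U_{ni}}{\tau}(G+H,\mu_{ni})-\frac{\lambda U_{ni}}{\varepsilon K}(P,T_{ni})$.
\end{itemize}
All cross terms cancel, exactly as in Theorem \ref{th31}, so $S_i=-\frac1\tau\|\mu_{ni}\|^2-\frac{\lambda D}{\varepsilon K}\|\nabla T_{ni}\|^2\le0$. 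Because $\beta_i>0$, this yields $\mathcal{E}^{n+1}\le\mathcal{E}^n$.

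\emph{Unique solvability.} The coefficients $G,H,P$ are evaluated at known extrapolated data, so the stage system is linear in the unknowns $(\phi_{ni},\mu_{ni},T_{ni},\dot T_{ni},U_{ni},\dot U_{ni})$. Since $\mathcal{A}$ is nonsingular, we rewrite the stage relations as $\dot v_{n\cdot}=\dt^{-1}\mathcal{A}^{-1}(v_{n\cdot}-v_n)$.

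It then suffices to show that the homogeneous problem, with $\phi_n=T_n=U_n=0$, has only the trivial solution; existence follows in the finite-dimensional (Fourier-spectral) setting, or by Fredholm/Lax--Milgram arguments in $H^1$. For the homogeneous problem we take the same combination of tested equations as in the energy step, summed with weights $\beta_i$. This gives $0\ge\mathcal{E}^{n+1}+\dt\sum_i\beta_i\big(\frac1\tau\|\mu_{ni}\|^2+\frac{\lambda D}{\varepsilon K}\|\nabla T_{ni}\|^2\big)$. Hence $\mu_{ni}=0$, and by $\phi_{ni}=\frac{\dt}{\tau}\sum_j\alpha_{ij}\mu_{nj}$ also $\phi_{ni}=0$.

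This is the step I expect to be the main obstacle: the estimate only gives $\nabla T_{ni}=0$, not $T_{ni}=0$, and it gives no direct information on $U_{ni}$. I would close the gap as follows:
\begin{itemize}
\item With $\mu_{ni}=0$, the definition of $\dot U_{ni}$ reduces to $-\frac{\lambda}{2\varepsilon K}(P,T_{ni})$, and $P$ involves the old $\mu$ rather than the current one. Since $T_{ni}$ is a constant $c_i$, the $T$-stage equations become a finite linear system coupling $(c_i,U_{ni})$ through $\mathcal{A}^{-1}$.
\item From $\mathcal{E}^{n+1}\le 0$ and the stage identity we get $U_{n+1}=0$ and $T_{n+1}=0$.
\item I would then use the first stage equation with $\mu_{ni}=\phi_{ni}=0$, namely $U_{ni}(G+H)=0$. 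For generic data with $G+H\neq0$, this gives $U_{ni}=0$.
\item Then $\dot T_{ni}=0$, so $T_{ni}=0$, which completes uniqueness.
\end{itemize}
If $G+H$ vanishes, a separate argument would be needed: use the positivity of $\mathcal{A}^{-1}$-related quadratic forms, or invoke the Gauss-method property that the $\mathcal{A}$ eigenvalues have positive real part.

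Finally, the $H^1$ regularity of $\phi_{n+1},T_{n+1},\phi_{ni},T_{ni}$ follows from the elliptic structure of the stage equations, $C_1\Delta$ and $D\Delta$, combined with the bound on $\mathcal{E}^{n+1}$.
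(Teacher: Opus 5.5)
Your energy-law argument is correct. It is the standard algebraically stable Runge--Kutta computation that the paper defers to \cite{wang2023stable,wang2025class}: the $m_{ij}$-identity applied componentwise, followed by the stage-wise cancellation that gives $S_i=-\frac1\tau\|\mu_{ni}\|^2-\frac{\lambda D}{\varepsilon K}\|\nabla T_{ni}\|^2$.

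The unique-solvability part, however, has a genuine gap, exactly where you suspected.
\begin{itemize}
\item Testing the homogeneous problem with the weights $\beta_i$ can only give $\mu_{ni}=0$, $\nabla T_{ni}=0$ and $\mathcal{E}^{n+1}=0$. With $\mathcal{B}=\operatorname{diag}(\beta_1,\dots,\beta_q)$, the relevant matrix is $\mathcal{B}\mathcal{A}^{-1}+\mathcal{A}^{-T}\mathcal{B}=\mathcal{A}^{-T}\big(M+(\beta_i\beta_j)_{i,j=1}^{q}\big)\mathcal{A}^{-1}$. For the Gauss methods $M=0$, so this matrix has rank one; for $q=2$ it equals $3\left(\begin{smallmatrix}1&-1\\-1&1\end{smallmatrix}\right)$. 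The test therefore only sees $(\phi,T,U)_{n+1}$.
\item Your way out, $U_{ni}b_{ni}=0\Rightarrow U_{ni}=0$, needs $b_{ni}\not\equiv0$ for every $i$. The theorem claims solvability for arbitrary previous-stage data, so ``generic'' is not enough. For example, $\Pi_{n-1}\phi_{ni}\equiv0$ and $\Pi_{n-1}T_{ni}\equiv0$ give $b_{ni}\equiv0$, because $g(0)=0$ and $H(0,0)=0$. At the same time $p_{ni}=K\,\Pi_{n-1}\mu_{ni}/\big(\tau\sqrt{E_1(0)}\big)$ need not vanish, so the $(c_i,U_{ni})$ system is left unresolved.
\item The fallback via the spectrum of $\mathcal{A}$ does not close the gap either. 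Since $b_{ni},p_{ni}$ change from stage to stage, the homogeneous system has the form $\mathcal{Y}=\Delta t\,\mathcal{A}\operatorname{diag}(J_1,\dots,J_q)\mathcal{Y}$ with stage-dependent $J_i$, not $\mathcal{Y}=\Delta t\,(\mathcal{A}\otimes J)\mathcal{Y}$. Its invertibility is not decided by the eigenvalues of $\mathcal{A}$.
\end{itemize}

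The missing idea is that the cancellation producing $S_i$ happens inside each stage. You may therefore weight stage $i$ by any $w_i>0$ instead of $\beta_i$. In the homogeneous problem $\mathcal{M}_n=\tau\Delta t^{-1}\mathcal{A}^{-1}\Phi_n$, $\dot{\mathcal{T}}_n=\Delta t^{-1}\mathcal{A}^{-1}\mathcal{T}_n$ and $\dot{\mathcal{U}}_n=\Delta t^{-1}\mathcal{A}^{-1}\mathcal{U}_n$, hence
\[
\frac{1}{\Delta t}\sum_{i,j=1}^{q}w_i(\mathcal{A}^{-1})_{ij}\Big[C_1(\nabla\phi_{ni},\nabla\phi_{nj})+\frac{C_2}{\varepsilon^2}(\phi_{ni},\phi_{nj})+\frac{\lambda}{\varepsilon K}(T_{ni},T_{nj})+2U_{ni}U_{nj}\Big]=\sum_{i=1}^{q}w_iS_i\le0 .
\]
For the Gauss methods take $w_i=\beta_i(1-\gamma_i)/\gamma_i$. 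Here is why this works.
\begin{itemize}
\item For $u\in\mathbb{R}^q$, condition 6 of \eqref{alge} gives $(\mathcal{A}^{-1}u)_i=u'(\gamma_i)$. Here $u(t)=t\,v(t)$, with $\deg v\le q-1$, is the polynomial interpolating $u(\gamma_i)=u_i$.
\item Exactness of Gauss quadrature up to degree $2q-1$ then yields $\sum_{i,j}w_i(\mathcal{A}^{-1})_{ij}u_iu_j=\frac12\int_0^1v^2\,dt\ge\sigma\sum_iw_iu_i^2$, with $\sigma=\min_i\frac{1}{2\gamma_i(1-\gamma_i)}>0$. For $q=2$ this gives $W\mathcal{A}^{-1}+\mathcal{A}^{-T}W=6W$.
\item This matrix inequality transfers to Hilbert-space-valued stage vectors. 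So the left-hand side of the display is at least $\frac{2\sigma}{\Delta t}\sum_iw_ie_i$, where $e_i$ is the quadratic form defining $\mathcal{E}^n$ evaluated at $(\phi_{ni},T_{ni},U_{ni})$.
\end{itemize}
All stage values therefore vanish at once, with no case distinction on $b_{ni}$. For a general tableau satisfying \eqref{alge}, this diagonal positivity of $\mathcal{A}^{-1}$ is an extra ingredient you must verify or assume. With uniqueness settled, your reduction of existence to uniqueness goes through, either in finite dimension or via Fredholm with the finite-rank $U$-coupling.
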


\begin{proof}\label{proof31}
The proof proceeds similarly to that of Theorem \ref{th31}. For brevity, the details are omitted and can be found in \cite{wang2023stable,wang2025class}.
	
\end{proof}

\begin{remark}\label{re32}
Since $\Pi_{n-1}v(t)$ is the Lagrange interpolation polynomial of degree at most $q-1$, it follows that $\Pi_{n-1}v_{ni}$ provides a $q$th-order approximation of $v_{ni}$ for each stage $i=1,\ldots,q$. Consequently, the scheme \eqref{ark_phi}-\eqref{ark_n+1} achieves a formal consistency order of $q+1$. A detailed analysis is provided in Theorem 4.2 of \cite{wang2025class}.

\end{remark}

Although the scheme \eqref{ark_phi}-\eqref{ark_n+1} is unconditionally stable and arbitrarily high-order accurate, its direct implementation requires solving a coupled multi-stage system at each time step. To overcome this difficulty, we develop a matrix-diagonalization-based decoupling strategy, which transforms the original problem into a set of independent elliptic equations with constant coefficients. These equations can be solved independently and in parallel, leading to a more efficient implementation.
This approach transforms the original problem into a set of independent elliptic-type equations, which can be solved efficiently and in parallel. It thereby avoids the computational bottleneck of iteratively solving coupled systems, as required in methods such as \cite{wang2025class}.

To formalize this decoupled approach, we first collect the internal stage variables into vector form. Let $\Phi_{n}=\left(\phi_{n 1}, \ldots, \phi_{n q}\right)^T$, $\mathcal{M}_n =(\mu_{n1},\dots,\mu_{nq})^T$, $\mathcal{U}_{n}=\left(U_{n 1}, \ldots, U_{n q}\right)^T$, $\dot{\mathcal{U}}_{n}=\left(\dot{U}_{n 1}, \ldots, \dot{U}_{n q}\right)^T$, $\mathcal{T}_{n}=\left(T_{n 1}, \ldots, T_{n q}\right)^T$, $\dot{\mathcal{T}}_{n}=\left(\dot{T}_{n 1}, \ldots, \dot{T}_{n q}\right)^T$, $\mathbb{1}=(1, \ldots, 1)^T$ and $I$ stands for the $q \times q$ identity matrix. Thus, \eqref{ark_phi} can be rewritten as follows
\begin{equation}\label{de_phi1}
	\begin{cases}
		\mathcal{M}_n=C_1\Delta \Phi_{n}-\dfrac{C_2}{\varepsilon^2}\Phi_{n}-B_1\mathcal{U}_{n},\\
		\Phi_{n}=\phi_n\mathbb{1}+\dfrac{\dt}{\tau }\mathcal{A}\mathcal{M}_n,
	\end{cases}
\end{equation}
where $B_{1}$ is the diagonal matrix-valued function:
\begin{equation*}
	B_{1}=\operatorname{diag}\left(b_{n 1}, \ldots, b_{n q}\right) \  \mbox { with }\ b_{n i}=G(\Pi_{n-1} \phi_{n i})+H(\Pi_{n-1} \phi_{n i}, \Pi_{n-1} T_{n i}).
\end{equation*}
Substituting the first equation of \eqref{de_phi1} into the second equation yields
\begin{equation}\label{de_phi2}
	\left(\mathcal{A}^{-1}-\dfrac{\dt C_1}{\tau }\Delta I+\dfrac {\dt C_2}{\tau\varepsilon^2}I\right)\Phi_{n}=\phi_n\mathcal{A}^{-1}\mathbb{1}-\dfrac{\dt}{\tau }B_1\mathcal{U}_{n}.
\end{equation}
Let $\Lambda$ be the diagonal matrix whose diagonal entries $\{\lambda_k\}_{k=1}^{q}$ are the eigenvalues of $\mathcal{A}^{-1}$, and $E$ be the matrix formed by the corresponding eigenvectors of
$\mathcal{A}^{-1}$, i.e.,
\begin{equation*}\label{EG}
	\mathcal{A}^{-1}E=E\Lambda.
\end{equation*}	
The above equation \eqref{de_phi2} can be represented as
\begin{equation*}
	\begin{aligned}
		&\left(E\Lambda E^{-1}-\dfrac{\dt C_1}{\tau }\Delta EE^{-1}+\dfrac {\dt C_2}{\tau\varepsilon^2}EE^{-1}\right)\Phi_{n}=\phi_n\mathcal{A}^{-1}\mathbb{1}-\dfrac{\dt}{\tau }B_1\mathcal{U}_{n},\\
		&E\left(\Lambda -\frac{\dt C_1}{\tau }\Delta I+\frac {\dt C_2}{\tau\varepsilon^2}I\right)E^{-1}\Phi_{n}=\phi_n\mathcal{A}^{-1}\mathbb{1}-\dfrac{\dt}{\tau }B_1\mathcal{U}_{n},\\
		&\left(\Lambda -\frac{\dt C_1}{\tau }\Delta I+\frac {\dt C_2}{\tau\varepsilon^2} I\right)E^{-1}\Phi_{n}=E^{-1}\phi_n\mathcal{A}^{-1}\mathbb{1}-\dfrac{\dt}{\tau }E^{-1}B_1\mathcal{U}_{n},
	\end{aligned}
\end{equation*}	
then we have
\begin{equation*}\label{de_phi3}
	\begin{aligned}
		\Phi_{n}=\ &E\left(\Lambda -\frac{\dt C_1}{\tau }\Delta I+\frac {\dt C_2}{\tau\varepsilon^2}I\right)^{-1}E^{-1}\phi_n\mathcal{A}^{-1}\mathbb{1}\\
		&-E\left(\Lambda -\frac{\dt C_1}{\tau }\Delta I+\frac {\dt C_2}{\tau\varepsilon^2}I\right)^{-1}\dfrac{\dt}{\tau }E^{-1}B_1\mathcal{U}_{n}.
	\end{aligned}
\end{equation*}
The above equation can be simplified as
\begin{equation}\label{de_phi}
	\Phi_{n}=g_n+B_n\mathcal{U}_{n},
\end{equation}
where 
\begin{subequations}\label{g_B}
	\begin{align}
		g_n&=E\left(\Lambda -\frac{\dt C_1}{\tau }\Delta I+\frac {\dt C_2}{\tau\varepsilon^2}I\right)^{-1}E^{-1}\phi_n\mathcal{A}^{-1}\mathbb{1}, \label{g_B_1}\\
		B_n&=-E\left(\Lambda -\frac{\dt C_1}{\tau }\Delta I+\frac {\dt C_2}{\tau\varepsilon^2}I\right)^{-1}\frac{\dt}{\tau }E^{-1}B_1. \label{g_B_2}
	\end{align}
\end{subequations}
In a similar way, from \eqref{ark_u} we have
\begin{equation}\label{de_u1}
	(\mathcal{A}^{-1}-\dt D\Delta I)\mathcal{T}_{n}=T_{n}\mathcal{A}^{-1}\mathbb{1}+\dt P_1\mathcal{U}_{n},
\end{equation}
where $P_{1}$ is the diagonal matrix-valued function:
\begin{equation*}
	P_{1}=\operatorname{diag}\left(p_{n 1}, \ldots, p_{n q}\right) \  \mbox { with }\ p_{n i}=P(\Pi_{n-1} \phi_{n i}, \Pi_{n-1} \mu_{n i}).
\end{equation*}
Thus \eqref{de_u1} can be rewritten as
\begin{equation}\label{de_u}
	\mathcal{T}_{n}=h_n+P_n\mathcal{U}_{n},
\end{equation}
where 
\begin{subequations}\label{l_W}
	\begin{align}
		h_n&=E(\Lambda -\dt D\Delta I)^{-1}E^{-1}T_{n}\mathcal{A}^{-1}\mathbb{1}, \label{l_W_1}\\
		P_n&=E(\Lambda -\dt D\Delta I)^{-1}\dt E^{-1}P_1. \label{l_W_2}
	\end{align}
\end{subequations}
Substituting \eqref{de_phi} and \eqref{de_u} into the second equations of \eqref{de_phi1} and \eqref{ark_u} respectively, we obtain:
\begin{equation}\label{de_sigma}
	\mathcal{M}_n=\tau\dt^{-1}\mathcal{A}^{-1}(g_n-\phi_n\mathbb{1})+\tau\dt^{-1}\mathcal{A}^{-1}B_n\mathcal{U}_{n},
\end{equation}
\begin{equation}\label{de_dotu}
	\dot{\mathcal{T}}_{n}=\dt^{-1}\mathcal{A}^{-1}(h_n-T_{n}\mathbb{1})+\dt^{-1}\mathcal{A}^{-1}P_n\mathcal{U}_{n}.
\end{equation}
Then the first relation of \eqref{ark_r} yields
\begin{equation}\label{de_dotr}
	\dot{\mathcal{U}}_{n}=\dt^{-1}f_n+\dt^{-1}F_n\mathcal{U}_{n}+s_n+S_n\mathcal{U}_{n},
\end{equation}
where 
$$f_n=\dfrac{1}{2\tau}\Big(\left(b_{n1},\tau \mathcal{A}^{-1}(g_n-\phi_n\mathbb{1})(1)\right),\ldots,\left(b_{nq},\tau \mathcal{A}^{-1}(g_n-\phi_n\mathbb{1})(q)\right)\Big)^T,$$
$$s_n=-\dfrac{\lambda}{2\varepsilon K}\Big(\left(p_{n1},h_n(1)\right),\ldots,\left(p_{nq},h_n(q)\right)\Big)^T,$$
and
$$F_n=\dfrac{1}{2\tau}(F_{ij}),\qquad F_{ij}=\big(b_{ni},(\tau \mathcal{A}^{-1}B_n)_{ij}\big),$$
$$S_n=-\dfrac{\lambda}{2\varepsilon K}(S_{ij}),\qquad S_{ij}=\big(p_{ni},(P_n)_{ij}\big).$$
From the second equation of \eqref{ark_r}, we have
\begin{equation*}\label{de_r1}
	\mathcal{U}_{n}=U_{n}\mathbb{1}+\dt \mathcal{A}\dot{\mathcal{U}}_{n}.
\end{equation*}
Substituting \eqref{de_dotr} to the above equation yields
\begin{equation*}\label{de_r2}
	\mathcal{U}_{n}=U_{n}\mathbb{1}+\mathcal{A}f_n+\mathcal{A}F_n\mathcal{U}_{n}+\dt \mathcal{A}s_n+\dt \mathcal{A}S_n\mathcal{U}_{n},
\end{equation*}
which finally follows
\begin{equation}\label{de_r3}
	\mathcal{U}_{n}=(I-\mathcal{A}F_n-\dt \mathcal{A}S_n)^{-1}(U_{n}\mathbb{1}+\mathcal{A}f_n+\dt \mathcal{A}s_n).
\end{equation}
With $\mathcal{U}_{n}$ computed by this expression, $\Phi_n, \mathcal{T}_{n}, \mathcal{M}_n, \dot{\mathcal{T}}_{n}, \dot{\mathcal{U}}_{n}$ can be obtained directly, and then $\phi_{n+1}, T_{n+1}$ and $U_{n+1}$ can be updated by \eqref{ark_n+1}.

The resulting fully-decoupled solution procedure is summarized as follows:

\begin{quote}
\begin{enumerate}
\renewcommand{\labelenumi}{\textbf{Step \arabic{enumi}.}}
	\item Evaluate $g_n, B_n, h_n$ and $P_n$ from \eqref{g_B} and \eqref{l_W}. These computations can be performed in parallel.
	\item Compute $f_n, s_n, F_n$ and $S_n$ by \eqref{de_dotr}.
	\item Determine $\mathcal{U}_n$ from \eqref{de_r3}.
	\item Obtain $\Phi_n, \mathcal{T}_{n}, \mathcal{M}_n, \dot{\mathcal{T}}_{n}$ and $\dot{\mathcal{U}}_{n}$ from \eqref{de_phi}, \eqref{de_u}, \eqref{de_sigma}, \eqref{de_dotu} and \eqref{de_dotr}, respectively.
	\item Update $\phi_{n+1}, T_{n+1}$ and $U_{n+1}$ by \eqref{ark_n+1}.
	\end{enumerate}
\end{quote}

\subsection{Fully discrete scheme}\label{sec33}
In this subsection, we propose a Fourier spectral method for the spatial discretization of \eqref{ark_phi}-\eqref{ark_n+1}. For simplicity, we consider the computational domain $\Omega=(0,2\pi)^2$. 
The extension to general rectangular periodic domains is straightforward. Then the Fourier space on $\Omega$ is defined by
\begin{equation*}\label{F_space}
	X_{N}:=\mbox{\rm{span}}\{e^{\mathrm{i}(kx+ly)}:0\leq|k|\leq \dfrac{N}{2},\ 0\leq|l|\leq \dfrac{N}{2}\},\ k,l\in\Z,\ x,y\in \R,
\end{equation*}
where $\mathrm{i}=\sqrt{-1}$ and $N$ is generally taken as a positive even integer. Let
\begin{equation*}
	x_{j_1}=j_1h=j_1\dfrac{2\pi }{N},\quad y_{j_2}=j_2h=j_2\dfrac{2\pi }{N},\quad 0\leq j_1,j_2\leq N-1,
\end{equation*}
be the uniform grid points on $\Omega$.

The fully discrete implementation proceeds as follows.

\textbf{(1) Computation of $g_n$, $B_n$, $h_n$, and $P_n$.}

Since the solution steps for $g_n, B_n, h_n$ and $P_n$ are analogous in structure, we take $g_n$ as an example to illustrate the algorithm explicitly. Let $g_n=E\hat{g}_n$, $\hat{g}_n=(\hat{g}_{n1},\ldots,\hat{g}_{nq})^T$, then \eqref{g_B_1} can be written as 
\begin{equation}\label{g_dis}
\left(\Lambda -\frac{\dt C_1}{\tau }\Delta I+\frac {\dt C_2}{\tau\varepsilon^2}I\right)\hat{g}_n=\mathcal{G}_n,
\end{equation}
where 
\begin{equation*}
\mathcal{G}_n=E^{-1}\phi_n\mathcal{A}^{-1}\mathbb{1}=(\mathcal{G}_{n1},\ldots,\mathcal{G}_{nq})^T.
\end{equation*}
Thus, \eqref{g_dis} can be decomposed into solving $q$ linear elliptic equations with constant coefficients:
\begin{equation*}\label{g_dis_q}
	\left(\lambda_i -\frac{\dt C_1}{\tau }\Delta I+\frac {\dt C_2}{\tau\varepsilon^2}I\right)\hat{g}_{ni}=\mathcal{G}_{ni}, \quad i=1, \ldots, q. 
\end{equation*}
Applying the Fourier transform in space yields
\begin{equation}\label{g_dis_q1}
	\left(\lambda_i +\frac{\dt C_1}{\tau }(k^2+l^2)+\frac {\dt C_2}{\tau\varepsilon^2}\right)\tilde{\hat{g}}_{ni,k,l}=\tilde{\mathcal{G}}_{ni,k,l}, \quad i=1, \ldots, q,
\end{equation}
where $\tilde{\mathcal{G}}_{ni,k,l}$ is the discrete Fourier coefficient of $\mathcal{G}_{ni}$, which can be obtained by the following discrete Fourier transform:
$$\tilde{\mathcal{G}}_{ni,k,l}=\dfrac{1}{N^2c_{k}c_l}\sum_{j_1=0}^{N-1}\sum_{j_2=0}^{N-1}\mathcal{G}_{ni}(x_{j_1},y_{j_2})e^{-i(kx_{j_1}+ly_{j_2})},\quad k,l=-N/2,\ldots,N/2,$$
where $c_k=1$ for $|k|<N/2$ and $c_k=2$ for $k=\pm N/2$, with $c_l$ defined analogously.
After solving \eqref{g_dis_q1} for	$\tilde{\hat g}_{ni,k,l}$, the function $\hat g_n$ can be obtained by the inverse Fourier transform:
\begin{equation*}
	\hat{g}_n(x_{j_1},y_{j_2})=\sum_{k,l=-N/2}^{N/2}\tilde{\hat{g}}_{ni,k,l}e^{i(kx+ly)},
\end{equation*}
thereby yielding $g_n$. $B_n, h_n$ and $P_n$ can be obtained through similar steps.

\textbf{(2) Evaluation of $f_n$, $s_n$, $F_n$, and $S_n$.}

Let $f_n=\frac{1}{2\tau}(f_{n1},\ldots,f_{nq})^T$, we calculate the discrete inner product by
$$f_{ni}=\dfrac{1}{N^2}\sum_{j_1=0}^{N-1}\sum_{j_2=0}^{N-1}b_{ni}(x_{j_1},y_{j_2})\mathcal{F}_{ni}(x_{j_1},y_{j_2}),$$
where
\begin{equation*}
\mathcal{F}_n=\tau \mathcal{A}^{-1}(g_n-\phi_n\mathbb{1})=(\mathcal{F}_{n1},\ldots,\mathcal{F}_{nq})^T.
\end{equation*}
The same calculation yields $s_n, F_n$ and $S_n$.

\textbf{(3) Determination of $\mathcal U_n$.}
	
The vector $\mathcal U_n$ is obtained by solving the $q\times q$ linear system \eqref{de_r3}.

\textbf{(4) Computation of the remaining variables and solution update.}
	
Once $\mathcal U_n$ is available, $\Phi_n$, $\mathcal T_n$, $\mathcal{M}_n$, $\dot{\mathcal T}_n$, and $\dot{\mathcal U}_n$ can be computed directly. The solution at the next time level is then updated through \eqref{ark_n+1}.

\section{Numerical examples}\label{sec4}
\setcounter{equation}{0}
In this section, we present several numerical examples to verify the accuracy, stability, and efficiency of our schemes.
We first examine their convergence behavior using smooth fabricated solutions and compare the results with those obtained by the method proposed in \cite{wang2025class}. We then investigate the energy stability of the schemes.
Next, we study the influence of the anisotropy coefficient $\kappa(\nabla\phi)$, the latent heat coefficient $K$, and the initial shape of the crystal nucleus on dendritic crystal growth in two dimensions. A comparison among second- to fourth-order schemes is also presented.
Finally, we perform a three-dimensional simulation of crystal growth with fourfold anisotropy. Unless otherwise stated, all computations are carried out on the periodic domain $(0,2\pi)^d$ ($d=2,3$), and spatial discretization is performed via the Fourier spectral method.

\subsection{Accuracy test}\label{sec41}
To verify the convergence rates of the proposed schemes, we first consider a two-dimensional example with fourfold anisotropy ($m=4$).
The computations are performed on a standard laptop with a 13th Gen Intel(R) Core(TM) i9-13900H processor running at 2.60 GHz and featuring 32.0 GB of RAM.

\begin{example}\label{example1}
\upshape
We first consider a 2D example admitting the exact solution
\begin{equation*}
\phi(x,y,t)=\sin(t)\cos(x)\cos(y),\quad T(x,y,t)=\sin(t)\sin(x)\sin(y).
\end{equation*}
Suitable source terms are introduced to ensure that the above solution satisfies the governing system \eqref{aeq}. The parameters are set as
\begin{equation*}
\begin{aligned}
	&C_1=0.9,\ C_2=10,\ C_0=1e4,\ \tau=4e3,\ \varepsilon=0.1,\\
	&\epsilon_4=0.05,\ \lambda=0.1,\ D=2.25e-2,\ K=0.01,\ \theta_0=0^{\circ}.
\end{aligned}
\end{equation*}

\end{example}

We discretize the space by using $128 \times 128$ Fourier modes so that the spatial discretization error is negligible compared with the temporal discretization error. 
Figure~\ref{fig_example1_1} shows the $H^1$-errors of $\phi$ and $T$ at $t=1$ obtained by the first-order scheme and the $q$-stage ($q=2,3,4,5$) Gauss schemes with different time step sizes. The observed convergence rates agree well with the theoretical predictions. Since the first-order scheme is significantly less accurate than the high-order schemes, it will not be considered in the remaining numerical tests.

To further illustrate the efficiency of the proposed method, we compare it with the arbitrarily high-order and unconditionally energy-stable scheme developed in \cite{wang2025class}. For convenience, our proposed method and the method in \cite{wang2025class} are denoted by Scheme I and Scheme II, respectively. 
The CPU times of the two schemes for different values of $q$ and time step sizes $\dt$ are listed in Table~\ref{table_example1}. It can be seen that Scheme I consistently requires less CPU time than Scheme II, and the difference becomes more pronounced as $q$ increases. Figure~\ref{fig_example1_2} shows the $H^1$-errors of $\phi$ and $T$ versus CPU time. For a given computational cost, higher-order schemes produce smaller errors. In addition, for the error levels indicated by the dashed lines, higher-order schemes require less CPU time.

\begin{figure*}[htbp]
	\begin{minipage}[t]{0.49\linewidth}
		\centerline{\includegraphics[scale=0.52]{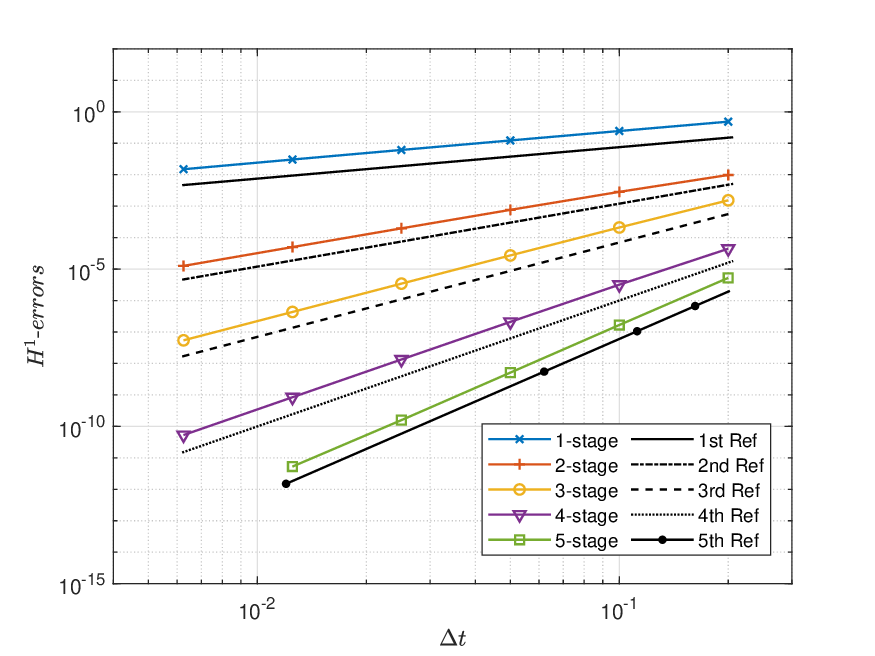}}
		\centerline{(a) $H^1$-errors for phase variable $\phi$}
	\end{minipage}
	\begin{minipage}[t]{0.49\linewidth}
		\centerline{\includegraphics[scale=0.52]{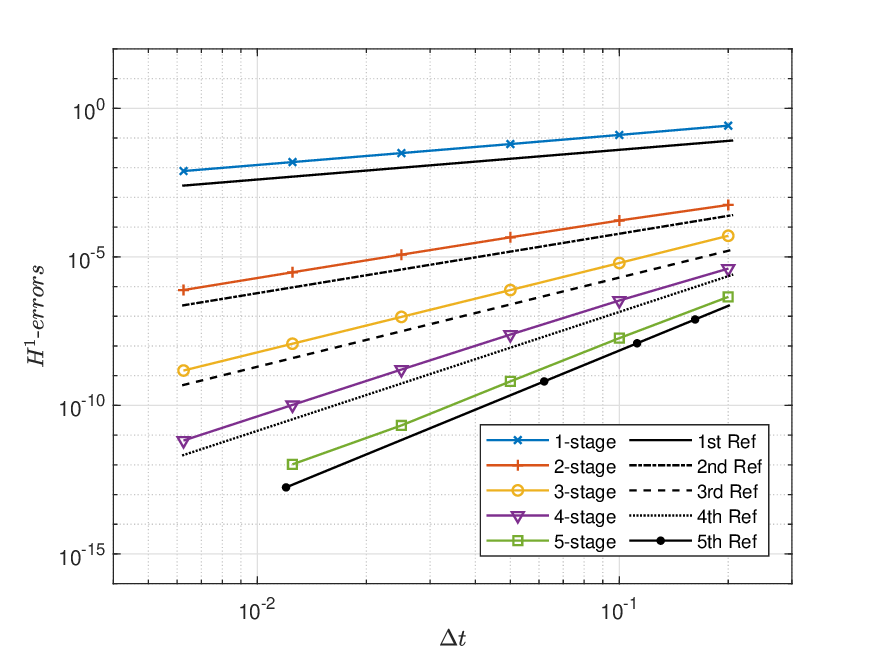}}
		\centerline{(b) $H^1$-errors for temperature $T$}
	\end{minipage}
	\caption{Example \ref{example1}: $H^1$-errors at $t=1$ using first-order scheme and $q$-stage ($q=2,3,4,5$) Gauss time-stepping method with different time steps.}\label{fig_example1_1}
\end{figure*}

\begin{table}[!htbp]
	\footnotesize
	\begin{tabular}{ccccccccc}
		\hline
		\multirow{2}{*}{$l$} & \multicolumn{2}{c}{$q=2$} & \multicolumn{2}{c}{$q=3$} & \multicolumn{2}{c}{$q=4$} & \multicolumn{2}{c}{$q=5$} \\ \cline{2-9} 
		& Scheme I   & Scheme II   & Scheme I   & Scheme II   & Scheme I   & Scheme II  & Scheme I   & Scheme II  \\ \hline
		1 & 0.28 & 0.54 & 0.45 & 2.52 & 0.68 & 3.03 & 1.15 & 4.34 \\
		2 & 0.42 & 1.03 & 0.77 & 5.88 & 1.28 & 6.88 & 1.85 & 8.43 \\
		3 & 0.75 & 1.87 & 1.43 & 11.86 & 2.30 & 13.77 & 3.42 & 17.46\\
		4 & 1.54 & 3.51 & 2.88 & 22.19 & 4.70 & 26.08 & 6.79 & 35.53 \\
		5 & 3.13 & 6.03 & 5.81 & 39.30 & 13.44 & 45.38 & 13.80 & 72.68 \\
		6 & 6.25 & 12.26 & 11.56 & 79.20 & 21.46 & 91.23 & 27.34 & 144.23 \\ \hline
	\end{tabular}
	\vskip 3mm 
	\caption{Example \ref{example1}: Comparison of the CPU costs (in second) between decoupled scheme (Scheme I) and coupled scheme (Scheme II) with $\dt=\frac{1}{5\cdot 2^{l-1}}$.
	}\label{table_example1}
\end{table}

\begin{figure*}[htbp]
    \begin{minipage}[t]{0.49\linewidth}
		\centerline{\includegraphics[scale=0.52]{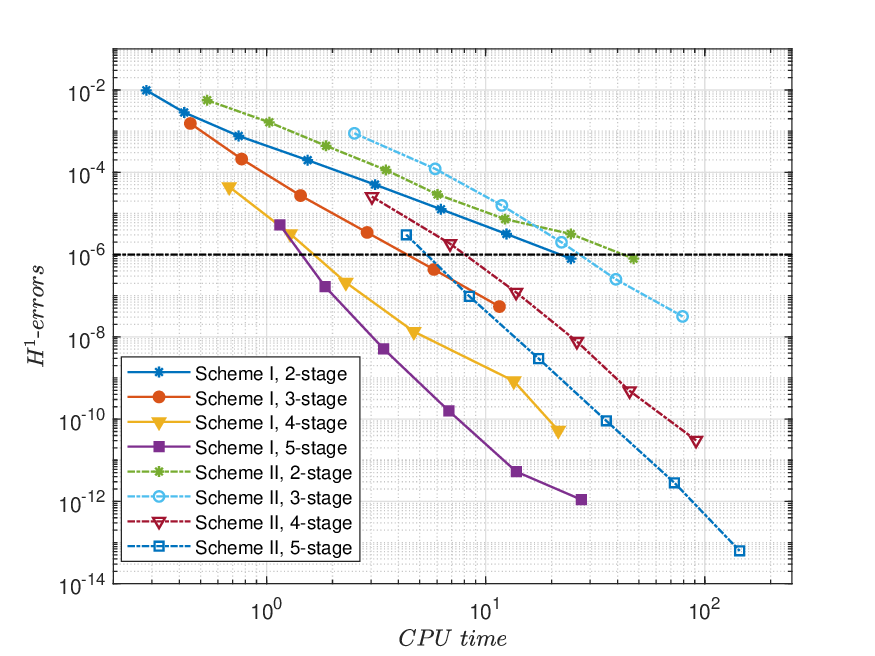}}
		\centerline{(a) $H^1$-errors for phase variable $\phi$}
	\end{minipage}
	\begin{minipage}[t]{0.49\linewidth}
		\centerline{\includegraphics[scale=0.52]{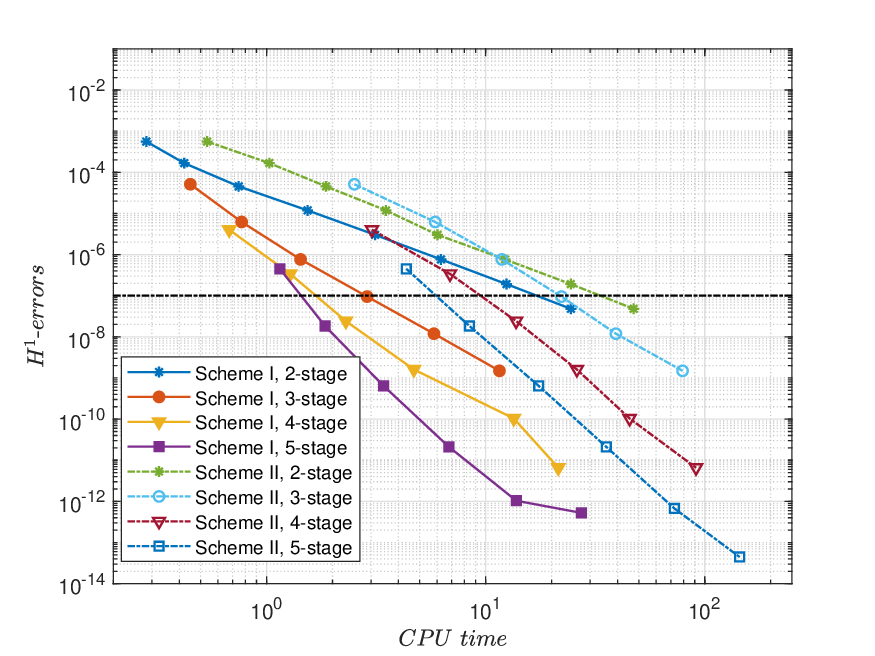}}
		\centerline{(b) $H^1$-errors for temperature $T$}
	\end{minipage}
	\caption{Example \ref{example1}: Comparison of the $H^1$-errors of $\phi$ and $T$ versus CPU time for the $q$-stage ($q=2,3,4,5$) Gauss time-stepping method under two numerical schemes. }\label{fig_example1_2}
\end{figure*}

\subsection{Stability test}\label{sec42}
In this subsection, we verify the stability property of the proposed schemes through a numerical example, and demonstrate how the two stabilizers, $C_1$ and $C_2$, effectively suppress high-frequency oscillations.

\begin{example}\label{example2}
\upshape
For the fourfold crystal growth model \eqref{aeq}, the initial conditions are chosen as
\begin{equation*}\label{ex2_initial}
\phi^0(\x)=\tanh\left(\dfrac{r_0-|\x-\x_0|^2}{\epsilon_0}\right),\quad T^0(\x)=-0.5\phi^0(\x),
\end{equation*}
where $\x_0=(\pi,\pi),\ r_0=0.25\pi,\ \epsilon_0=0.1$. The source terms are taken to be zero, and the remaining parameters are given by
\begin{equation}\label{ex2_params}
\begin{aligned}
	&C_1=0.9,\ C_2=10,\ C_0=1e5,\ \tau=1e3,\ \varepsilon=0.1,\\
	&\epsilon_4=0.06,\ \lambda=1,\ D=1e-3,\ K=0.1,\ \theta_0=0^{\circ}.
\end{aligned}
\end{equation}

\end{example}

As shown in Figure~\ref{fig_example2_1}, the anisotropy coefficient $\kappa(\nabla\phi)$ exhibits strong oscillatory behavior, which makes the numerical computation more challenging. This motivates the following numerical test on the stability of the proposed schemes.

In this example, we use $128 \times 128$ Fourier modes for the spatial discretization and a 3-stage Gauss time-stepping method for the time discretization. 
Figure~\ref{fig_example2_2} shows the evolution of the modified energy for different time step sizes. In all cases, the energy decreases monotonically, which is consistent with Theorem~\ref{th32}. Meanwhile, the energy curves become increasingly close to each other as $\dt$ decreases, indicating convergence of the numerical solution with respect to the time step size.
Having confirmed the energy stability of the scheme, we next examine its numerical accuracy through the quantity $\xi^{n}=U^{n}/\sqrt{E_1(\phi^{n})}$. Figure~\ref{fig_example2_3} shows that $\xi^n$ remains close to 1
throughout the computation. Moreover, the deviation of $\xi^n$ from 1
decreases as $\dt$ becomes smaller.

To further illustrate the role of the stabilizers $C_1$ and $C_2$, we consider the following two parameter settings while keeping all other parameters	in \eqref{ex2_params} fixed:
\begin{equation*}\label{stabilizers}
(1)\ C_1=C_2=0; \qquad (2)\ C_1=0.9,\ C_2=10.
\end{equation*}
Figure~\ref{fig_example2_4} shows the numerical results obtained with the two parameter settings using the same time step size $\dt=0.1$. When $C_1=C_2=0$, the quantity $\xi^n$ deviates from 1 and pronounced oscillations appear in the numerical solution. By contrast, with $C_1=0.9$ and $C_2=10$, $\xi^n$ stays much closer to 1 and the oscillations are effectively reduced. These observations indicate that the stabilizers play an important role in maintaining the accuracy and stability of the scheme. 

\begin{figure*}[htbp]
	\begin{minipage}[t]{0.49\linewidth}
		\centerline{\includegraphics[scale=0.52]{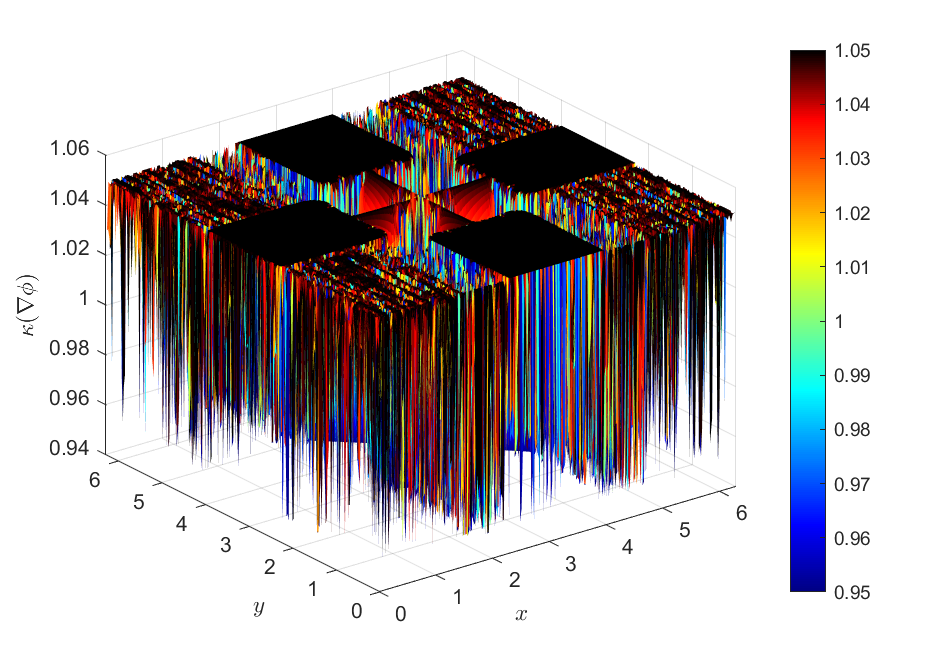}}
		\centerline{(a) The profile of $\kappa(\nabla \phi)$}
	\end{minipage}
	\begin{minipage}[t]{0.49\linewidth}
		\centerline{\includegraphics[scale=0.52]{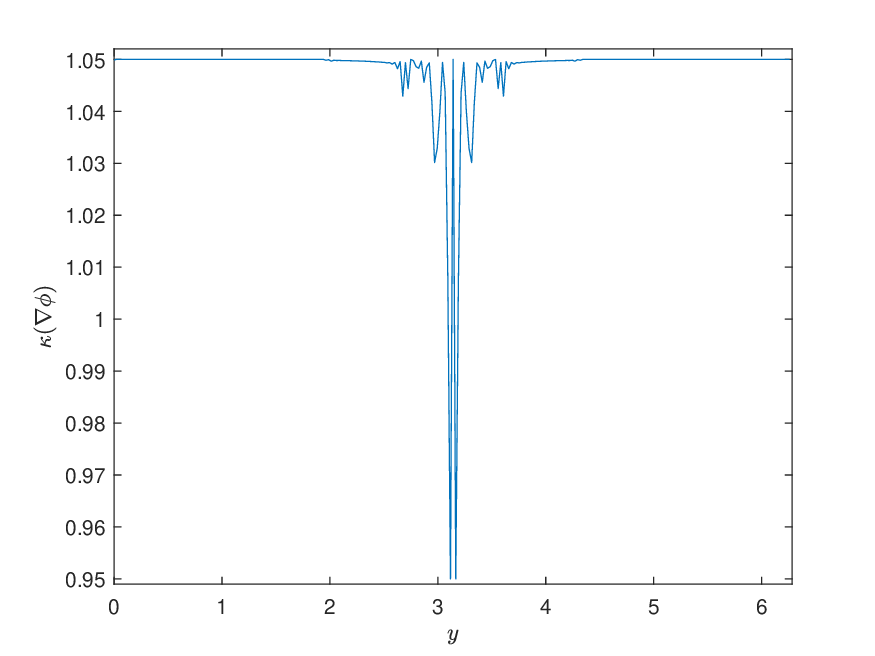}}
		\centerline{(b) $\kappa(\nabla \phi)$ at $y=3.117$}
	\end{minipage}
	\caption{Example \ref{example2}: Spatial distribution of the anisotropic coefficient $\kappa(\nabla \phi)$ at $t=0$.
	}\label{fig_example2_1}
\end{figure*}

\begin{figure*}[htbp]
	\centering
	\includegraphics[width=0.55\textwidth]{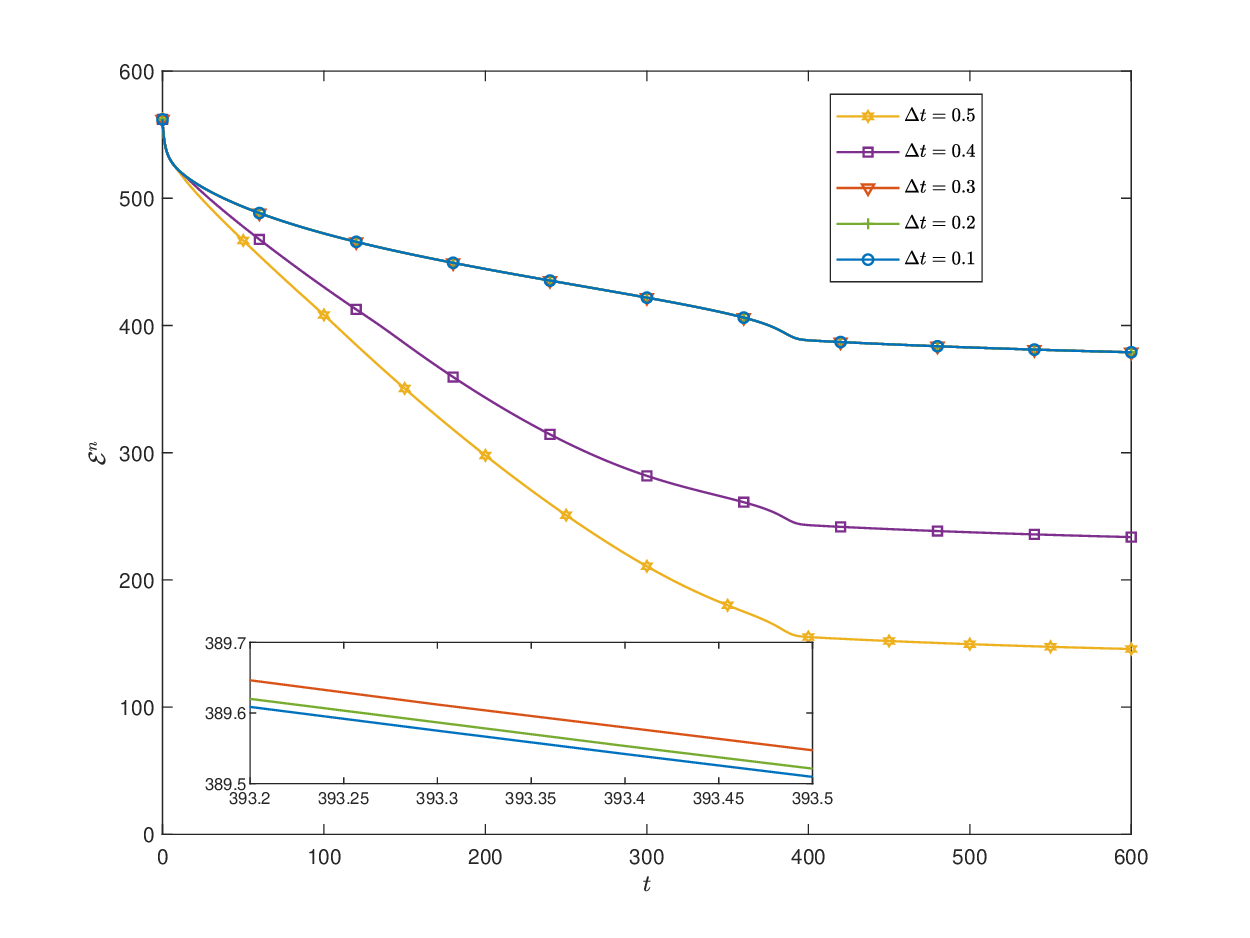}
	\caption{Example \ref{example2}: Evolution of the modified energy $\mathcal{E}^{n}$ for different time steps.}\label{fig_example2_2}
\end{figure*}

\begin{figure*}[htbp]
	\begin{minipage}[t]{0.49\linewidth}
		\centerline{\includegraphics[scale=0.38]{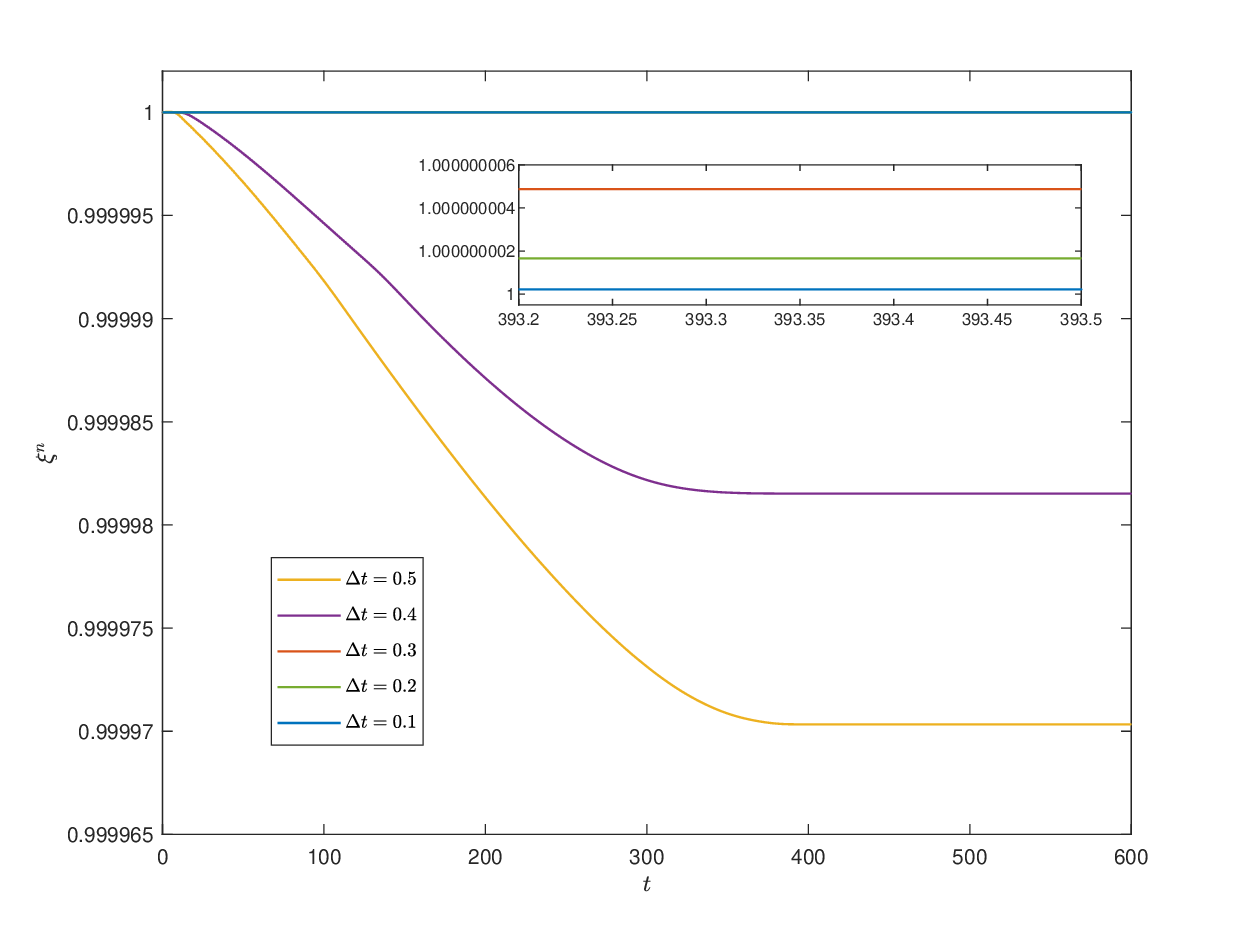}}
		\centerline{(a) Temporal evolution of $\xi^n$}
	\end{minipage}
	\begin{minipage}[t]{0.49\linewidth}
		\centerline{\includegraphics[scale=0.38]{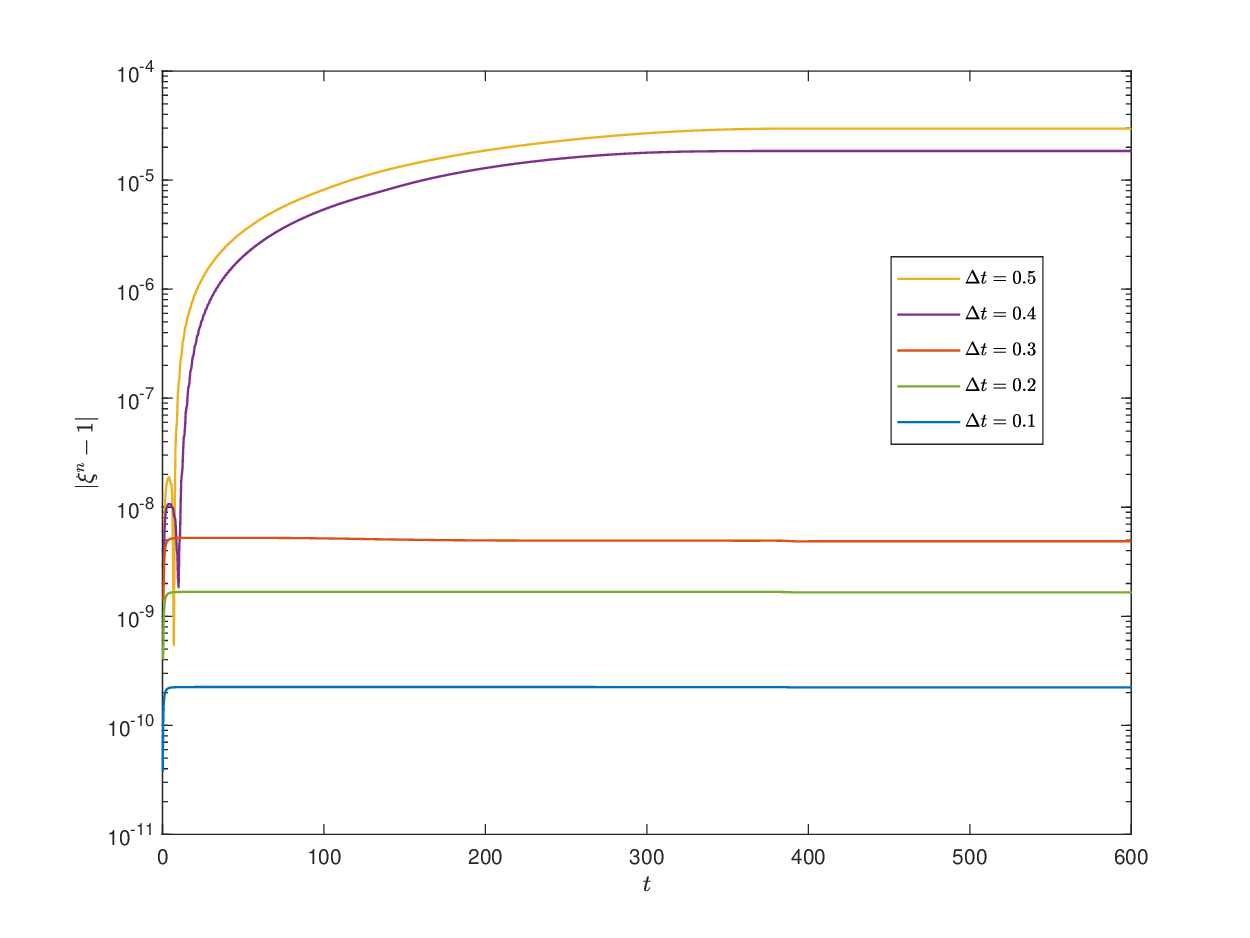}}
		\centerline{(b) Temporal variation of $|\xi^n - 1|$}
	\end{minipage}
	\caption{Example \ref{example2}: Temporal behavior of $\xi^{n}$ and its deviation from 1 for different time steps.}\label{fig_example2_3}
\end{figure*}

\begin{figure*}[htbp]
	\begin{minipage}[t]{0.49\linewidth}
		\centerline{\includegraphics[scale=0.4]{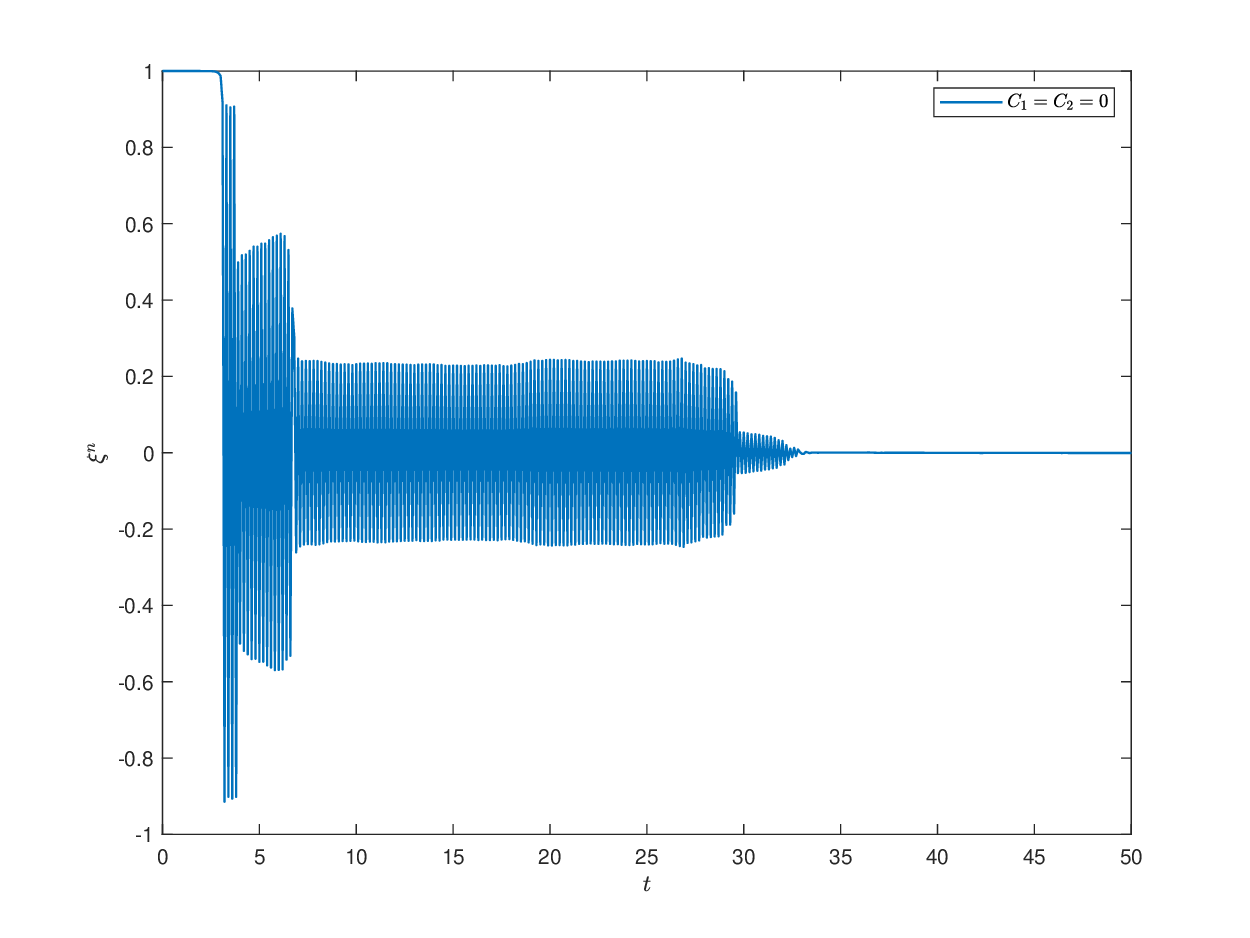}}
		\centerline{(a) $C_1=C_2=0$}
	\end{minipage}
	\begin{minipage}[t]{0.49\linewidth}
		\centerline{\includegraphics[scale=0.4]{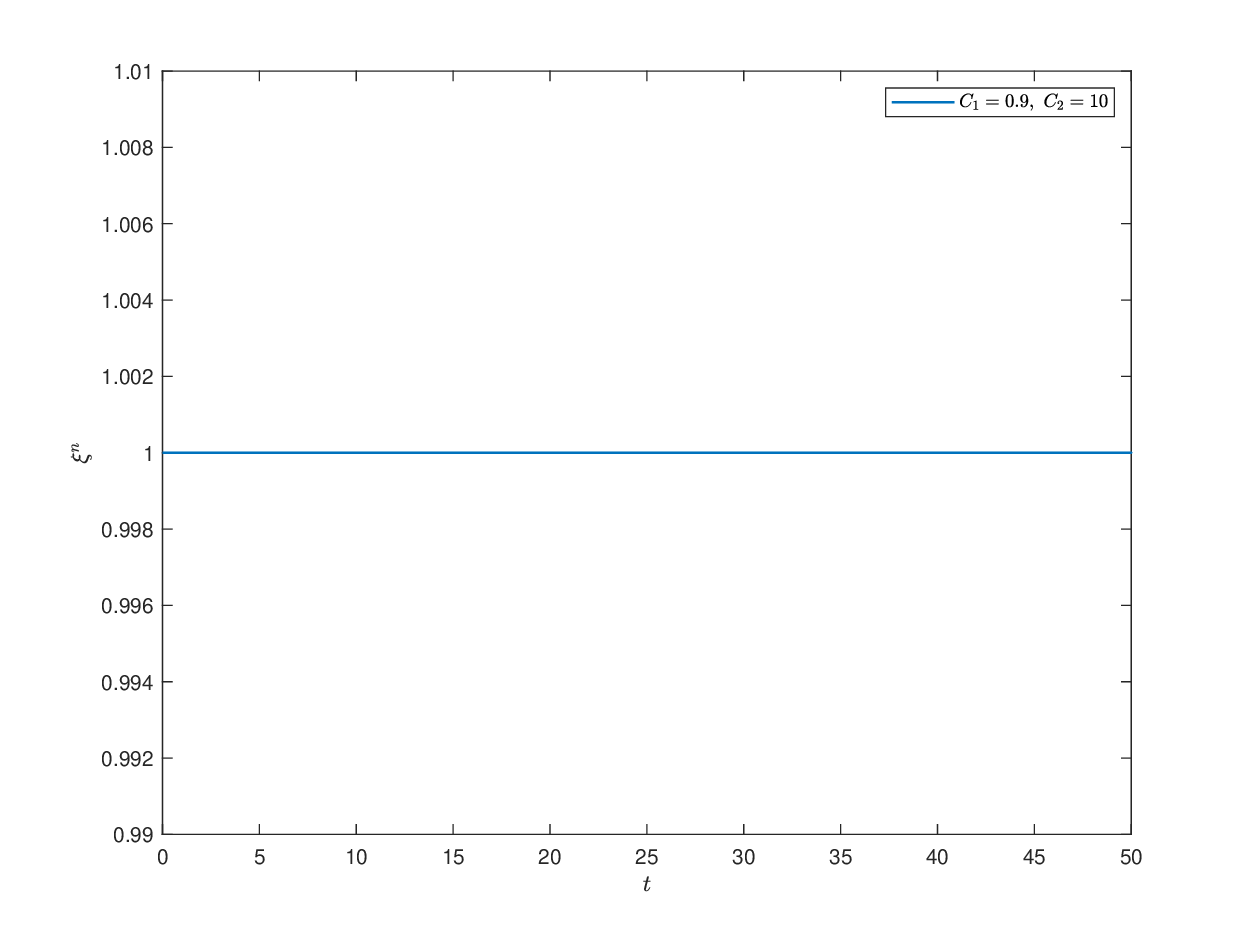}}
		\centerline{(b) $C_1=0.9,\ C_2=10$}
	\end{minipage}
	\caption{Example \ref{example2}: Evolution of $\xi^n$ under different stabilization parameters with time step $\Delta t=0.1$.
	}\label{fig_example2_4}
\end{figure*}

\subsection{2D dendrite crystal growth}\label{sec43}
In this subsection, we investigate how various types of anisotropy affect the growth of dendritic crystals in 2D case. This study involves modifying the shapes and positions of the initial crystal nuclei and adjusting model parameters to observe their effects on crystal growth. Unless specified otherwise, we use $512 \times 512$ Fourier modes for the spatial discretization and a 4-stage Gauss time-stepping method for the time discretization with $\dt=0.01$.

\begin{example}\label{example3}
\upshape
We first consider the fourfold anisotropy case. The initial conditions are given by
\begin{equation}\label{ex3_initial}
\phi^0(\x)=\tanh\left(\dfrac{r_0-|\x-\x_0|^2}{\epsilon_0}\right),\quad T^0(\x)=
\begin{cases}
	0,\qquad \quad \phi^0(\x)>0,\\
	-0.55, \quad \mbox{otherwise},
\end{cases}
\end{equation}
where $\x_0=(\pi,\pi),\ r_0=2.7e-2,\ \epsilon_0=5.4e-3$. The remaining model parameters are chosen as
\begin{equation}\label{ex3_params}
C_1=0.6,\ C_2=10,\ C_0=5e5,\ \tau=6e2,\ \varepsilon=1.5e-2,\ \lambda=3.6e2,\ D=2.25e-3.
\end{equation}

\end{example}

To study the influence of the latent heat coefficient, we fix $m=4$, $\epsilon_{4}=0.04$, and $\theta_0=0^{\circ}$, and vary $K$ while keeping all other parameters in \eqref{ex3_params} unchanged.
In Figures~\ref{fig_example3_K6}-\ref{fig_example3_K9}, we present snapshots of the phase function $\phi$ and temperature $T$ at different time instances for $K$ varying from 0.6 to 0.9 with an increment of 0.1. 
Starting from the initial circular nucleus in the first subfigure of Figure~\ref{fig_example3_K6}(a), the crystal gradually develops a fourfold dendritic structure with missing orientations at the corners, which is characteristic of fourfold anisotropy.
Since the latent heat coupling term restricts heat to propagate only at the interface, the temperature profiles $T$ align closely with the phase transition behavior, resulting in a similar dendritic pattern.
To better illustrate the effect of the latent heat coefficient $K$, we plot the interface $\phi=0$ every 2 time units from $t=0$ to $t=20$ in Figure~\ref{fig_example3_contour}. It can be observed that, as $K$ increases, the dendrite branches become thinner, while the tips become sharper. These results indicate that the latent heat coefficient $K$ has a significant influence on the morphology of the dendrites.

The energy dissipation and crystal growth behavior are further illustrated in Figure~\ref{fig_example3_1}. The crystal size is quantified by the characteristic radius of an equivalent circle whose area is given by $\int_{\Omega} \frac{1+\phi}{2} d \boldsymbol{x}$. Figure~\ref{fig_example3_1}(a) shows that the modified energy consistently maintains a monotonic decrease, indicating that the numerical solutions satisfy the energy dissipation property. Figure~\ref{fig_example3_1}(b) shows that the crystal size increases as the latent heat coefficient $K$ decreases.
The numerical results above are consistent with those reported in \cite{kobayashi1993modeling,karma1998,Yang2019,2019Zhang,Lmh2022,guo2024efficient}.

Furthermore, by fixing $K=0.7$ and all other parameters in \eqref{ex3_params}, we explore the impact of the anisotropic strength $\epsilon_{4}$ on the morphology of dendritic crystals. Figure~\ref{fig_example3_e4} shows the 2D dynamical evolutions of crystal growth process for four cases of $\epsilon_{4}=0.01,\ \epsilon_{4}=0.02,\ \epsilon_{4}=0.04,$ and $\epsilon_{4}=0.08$, respectively. 
By examining the profiles of $\phi$ and the corresponding interface contours in Figure~\ref{fig_example3_e4}, we observe that increasing $\epsilon_4$ leads to thinner dendrite branches.

Finally, we compare the performance of the 2-, 3-, and 4-stage Gauss time-stepping schemes.
Setting $K=0.7,\ \epsilon_{4}=0.04,\ \dt=0.01$, and using $256 \times 256$ Fourier modes for spatial discretization, we plot in Figure~\ref{fig_example3_compare} the phase-field and temperature profiles along the middle cross section $y=2.0126$ at time $t=20$ using the $q$-stage ($q=2,3,4$) Gauss time-stepping method. Reference solutions, labeled as ``Ref'', are derived by the 5-stage scheme with $512 \times 512$ Fourier modes and a time step of $\dt=0.001$. The reference solutions agree well with those obtained by the 4-stage scheme using the same spatial resolution and time step size. The enlarged views show that the results produced by the 4-stage scheme are slightly closer to the reference solution than those of the 3-stage scheme, while both outperform the 2-stage scheme.

\begin{figure*}[htbp]
	\begin{minipage}[t]{0.24\linewidth}
		\centerline{\includegraphics[scale=0.28]{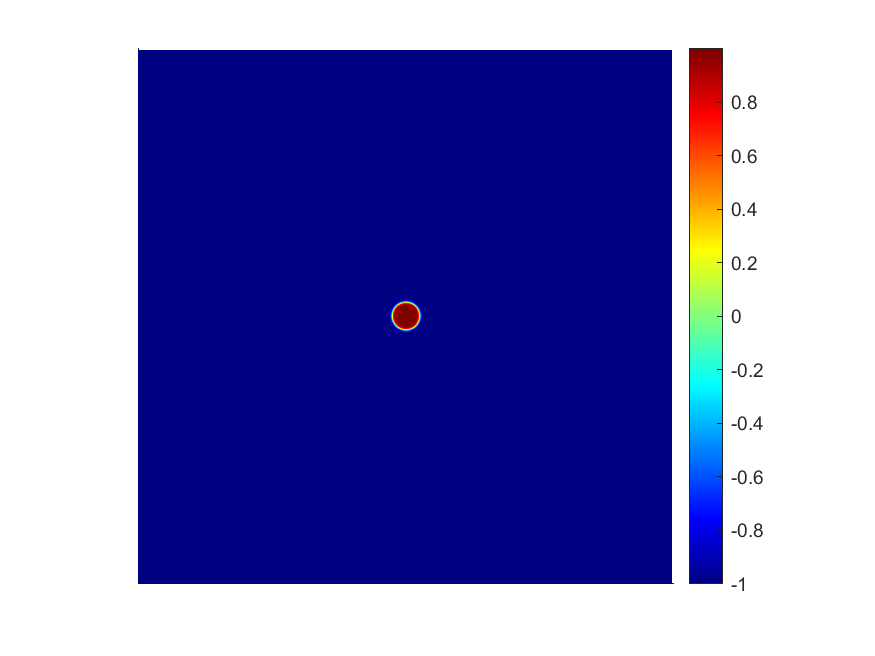}}
	\end{minipage}
	\begin{minipage}[t]{0.24\linewidth}
		\centerline{\includegraphics[scale=0.28]{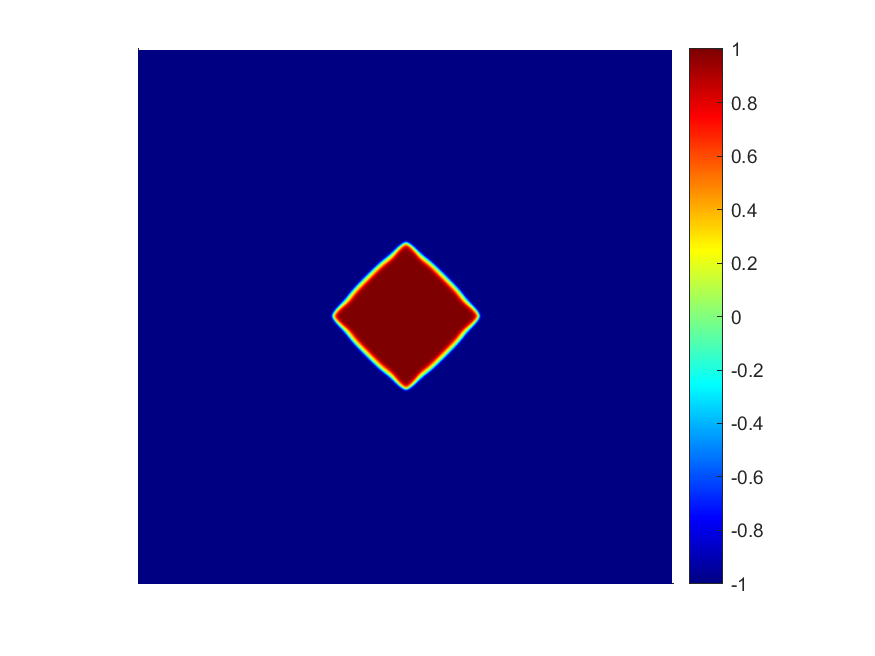}}
	\end{minipage}
	\begin{minipage}[t]{0.24\linewidth}
		\centerline{\includegraphics[scale=0.28]{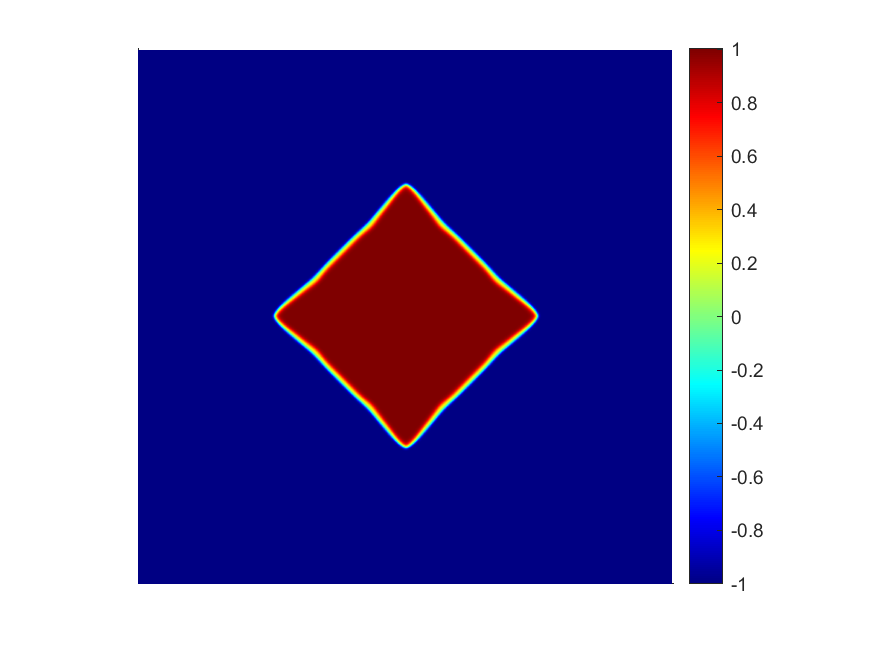}}
	\end{minipage}
	\begin{minipage}[t]{0.24\linewidth}
		\centerline{\includegraphics[scale=0.28]{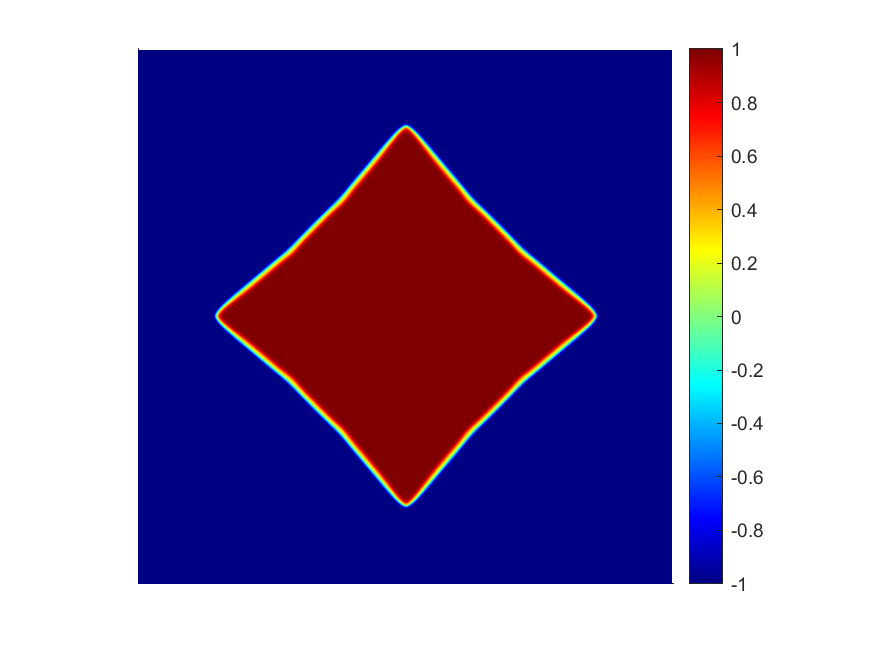}}
	\end{minipage}
	\centerline{(a) Phase-field evolution}
	\vskip 3mm
	\begin{minipage}[t]{0.24\linewidth}
		\centerline{\includegraphics[scale=0.28]{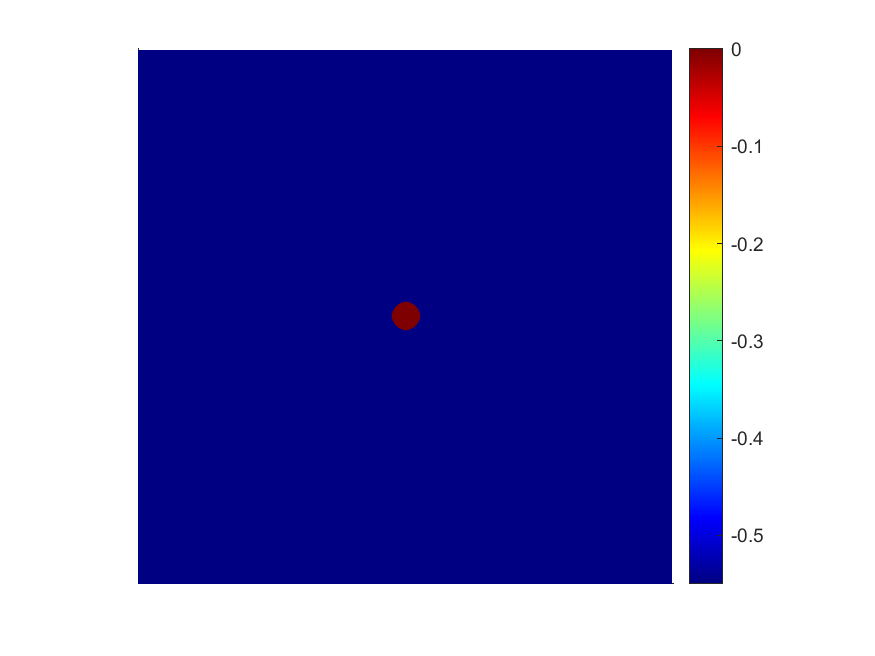}}
	\end{minipage}
	\begin{minipage}[t]{0.24\linewidth}
		\centerline{\includegraphics[scale=0.28]{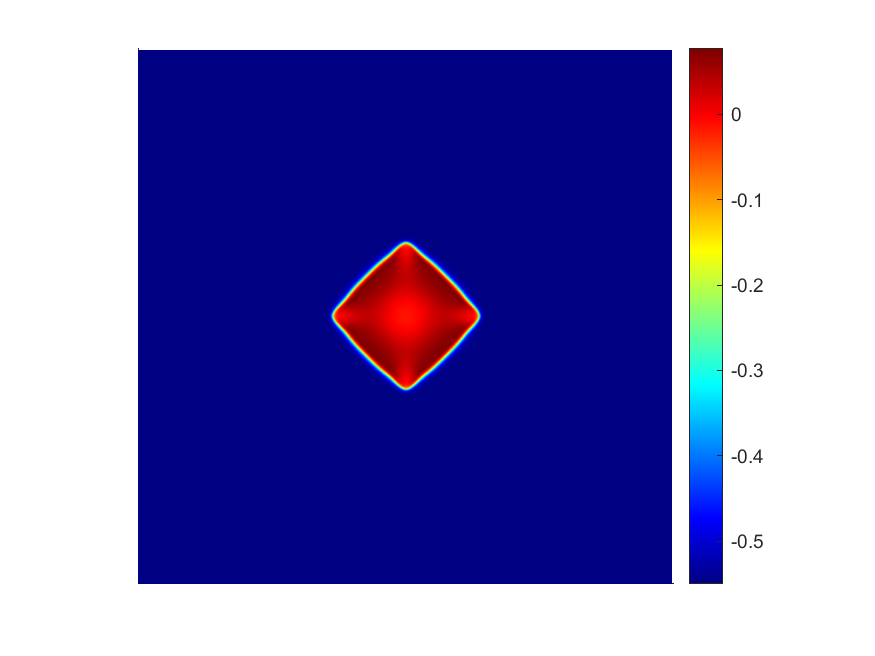}}
	\end{minipage}
	\begin{minipage}[t]{0.24\linewidth}
		\centerline{\includegraphics[scale=0.28]{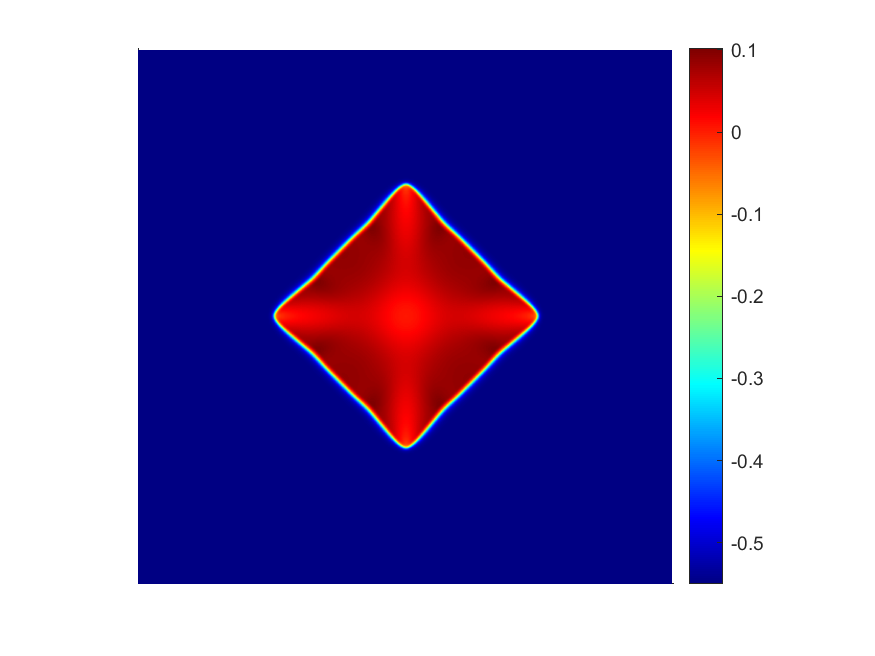}}
	\end{minipage}
	\begin{minipage}[t]{0.24\linewidth}
		\centerline{\includegraphics[scale=0.28]{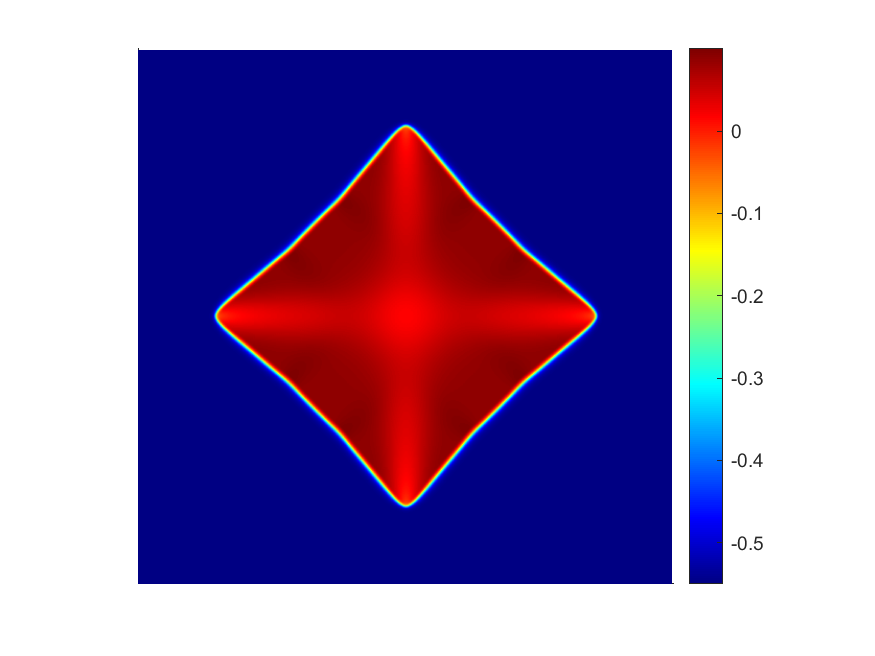}}
	\end{minipage}
	\centerline{(b) Temperature evolution}
	\caption{Example \ref{example3}: Evolution of phase-field variable $\phi$ and temperature $T$ in a 2D dendritic crystal growth simulation with fourfold anisotropy. Snapshots are taken at $t=0, 5, 10, 15$ with $K = 0.6$.
	}\label{fig_example3_K6}
\end{figure*}

\begin{figure*}[htbp]
	\begin{minipage}[t]{0.24\linewidth}
		\centerline{\includegraphics[scale=0.28]{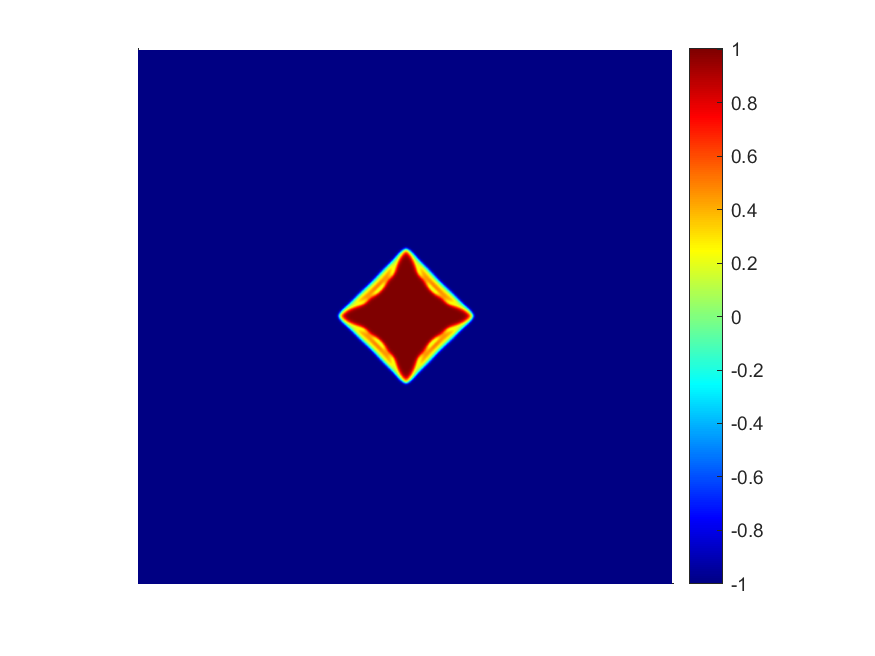}}
	\end{minipage}
	\begin{minipage}[t]{0.24\linewidth}
		\centerline{\includegraphics[scale=0.28]{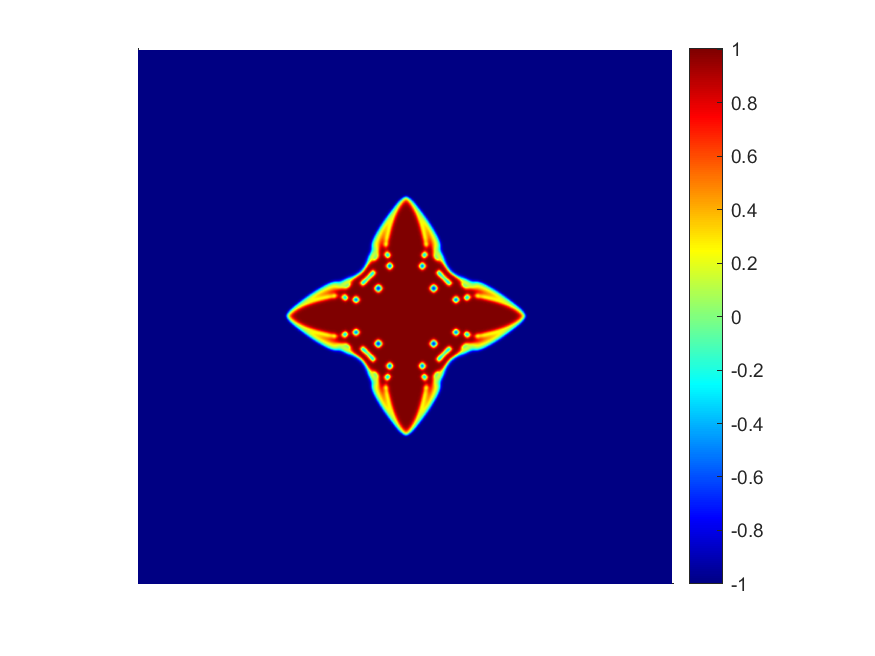}}
	\end{minipage}
	\begin{minipage}[t]{0.24\linewidth}
		\centerline{\includegraphics[scale=0.28]{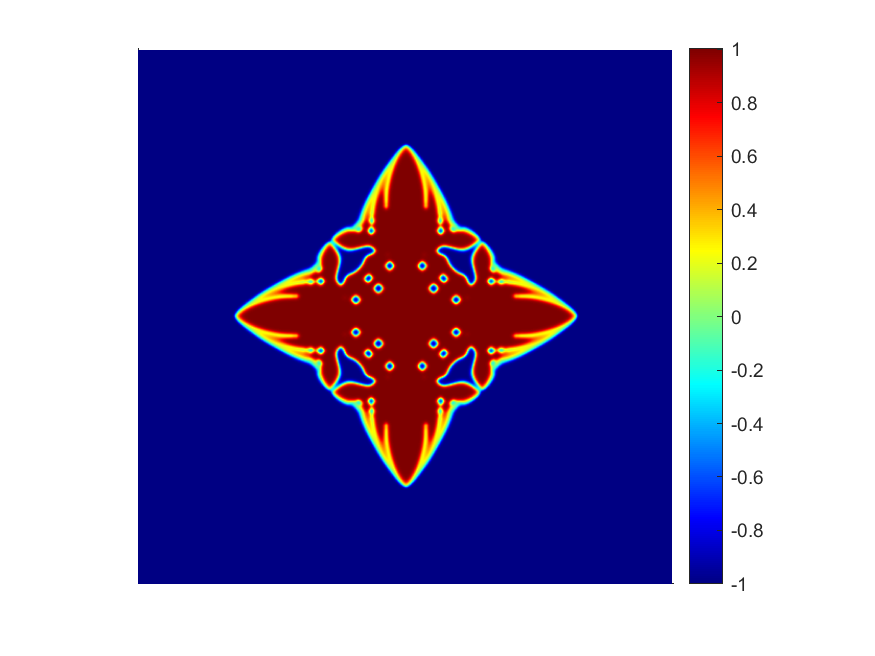}}
	\end{minipage}
	\begin{minipage}[t]{0.24\linewidth}
		\centerline{\includegraphics[scale=0.28]{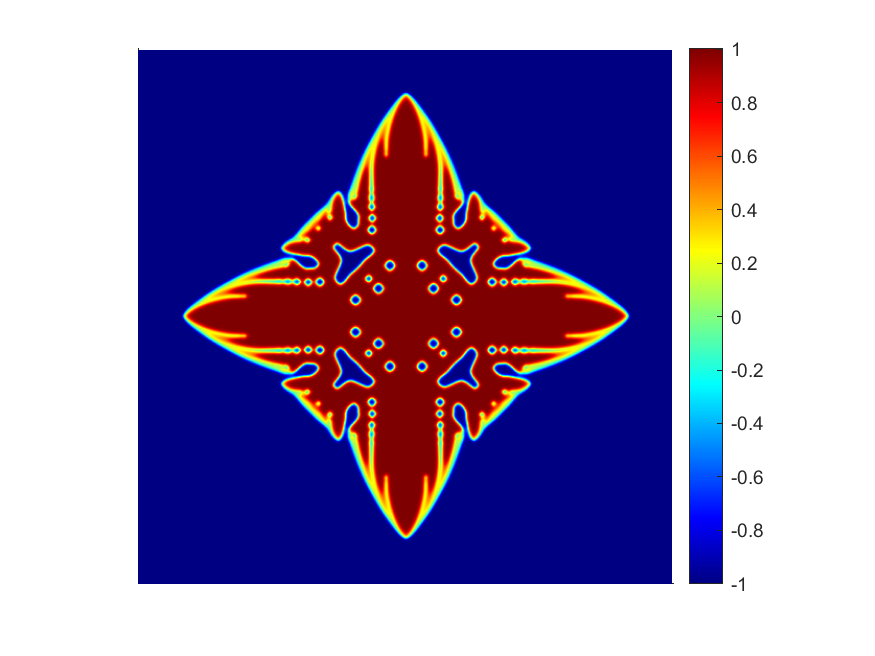}}
	\end{minipage}
	\centerline{(a) Phase-field evolution}
	\vskip 3mm
	\begin{minipage}[t]{0.24\linewidth}
		\centerline{\includegraphics[scale=0.28]{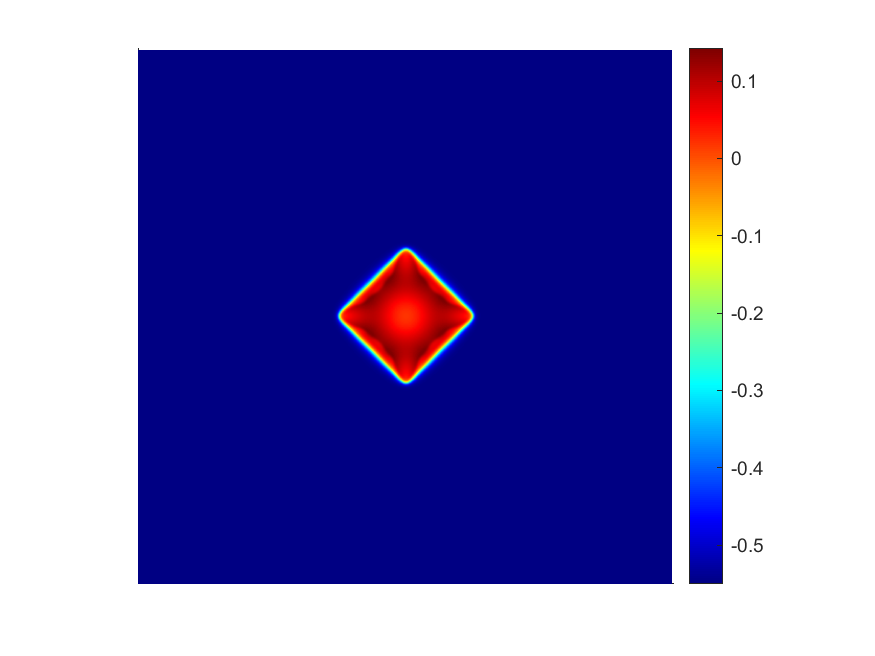}}
	\end{minipage}
	\begin{minipage}[t]{0.24\linewidth}
		\centerline{\includegraphics[scale=0.28]{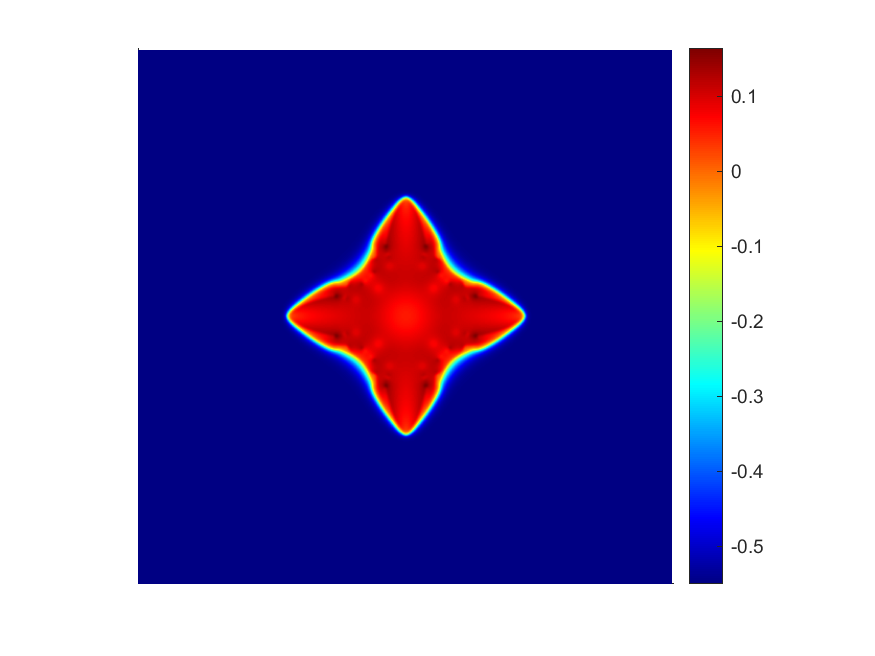}}
	\end{minipage}
	\begin{minipage}[t]{0.24\linewidth}
		\centerline{\includegraphics[scale=0.28]{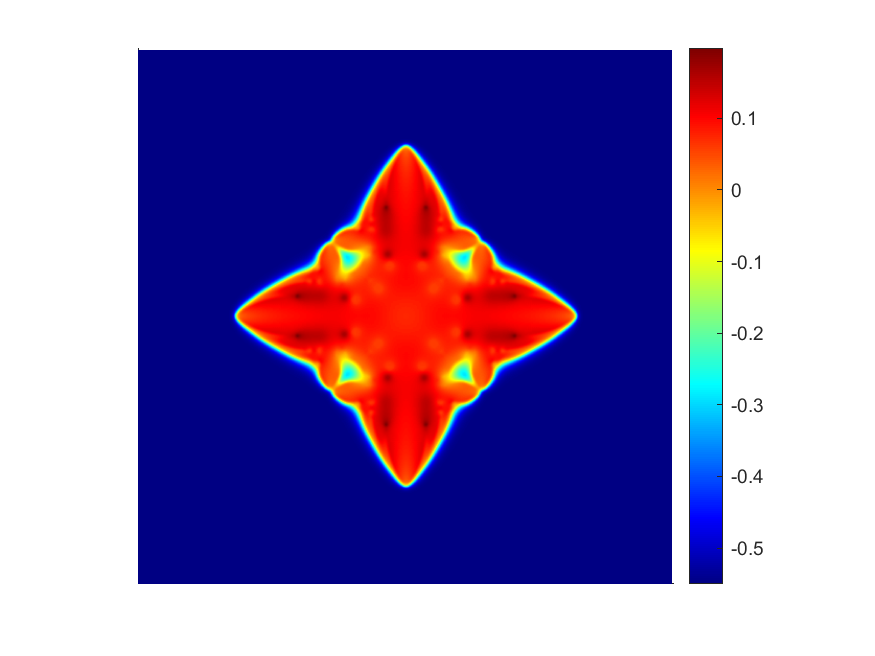}}
	\end{minipage}
	\begin{minipage}[t]{0.24\linewidth}
		\centerline{\includegraphics[scale=0.28]{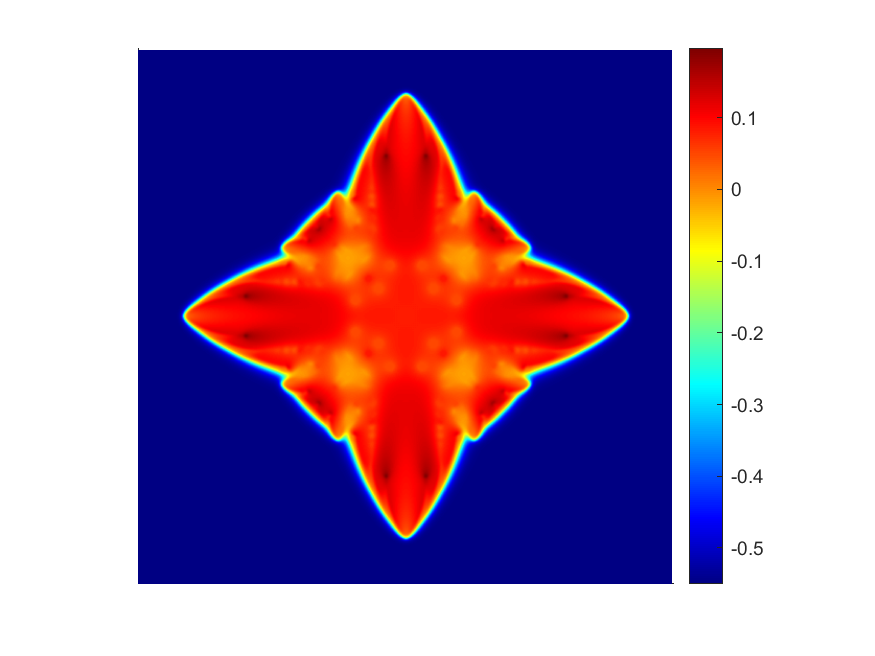}}
	\end{minipage}
	\centerline{(b) Temperature evolution}
	\caption{Example \ref{example3}: Evolution of phase-field variable $\phi$ and temperature $T$ in a 2D dendritic crystal growth simulation with fourfold anisotropy. Snapshots are taken at $t=5, 10, 15, 20$ with $K = 0.7$.
	}\label{fig_example3_K7}
\end{figure*}

\begin{figure*}[htbp]
	\begin{minipage}[t]{0.24\linewidth}
		\centerline{\includegraphics[scale=0.28]{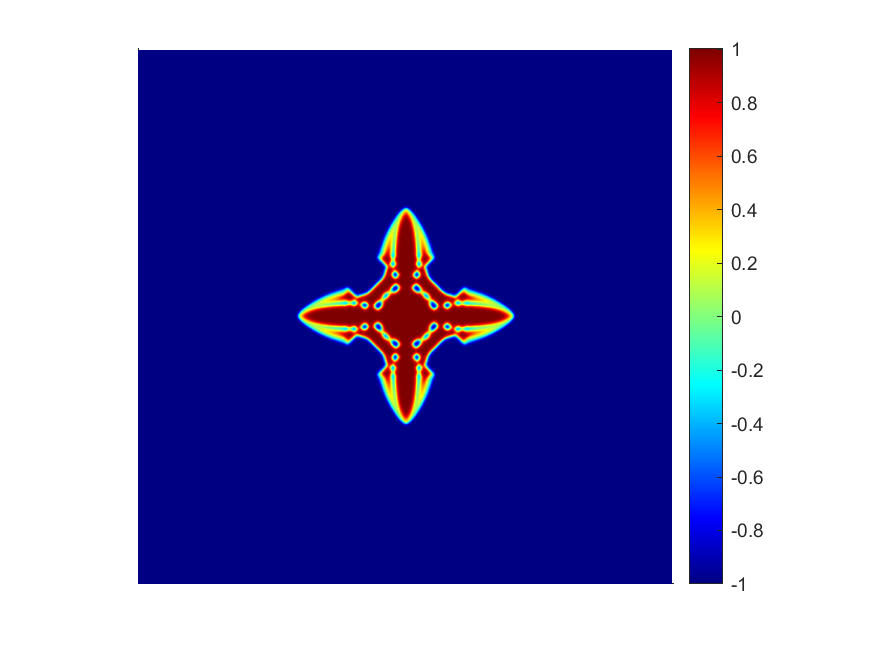}}
	\end{minipage}
	\begin{minipage}[t]{0.24\linewidth}
		\centerline{\includegraphics[scale=0.28]{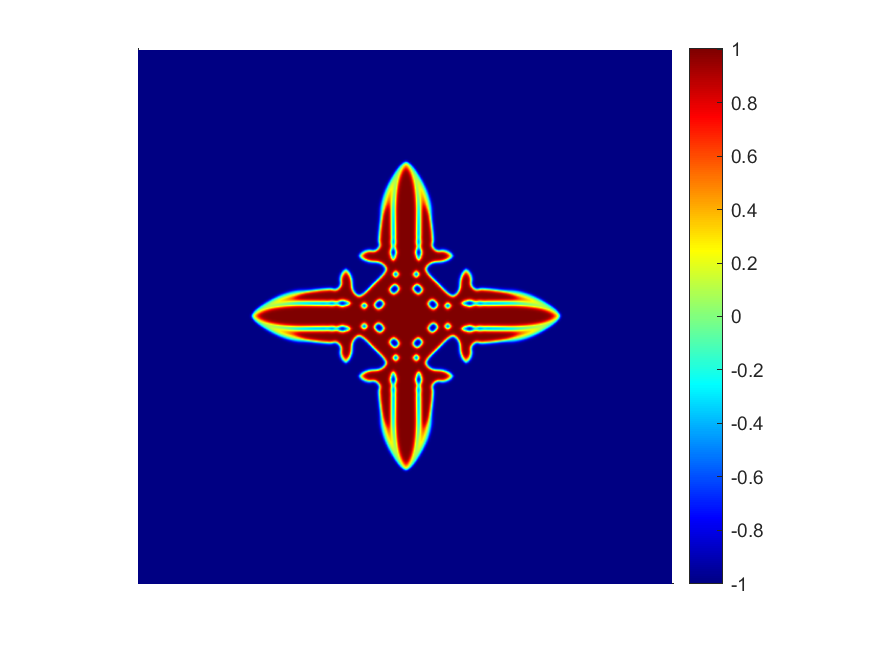}}
	\end{minipage}
	\begin{minipage}[t]{0.24\linewidth}
		\centerline{\includegraphics[scale=0.28]{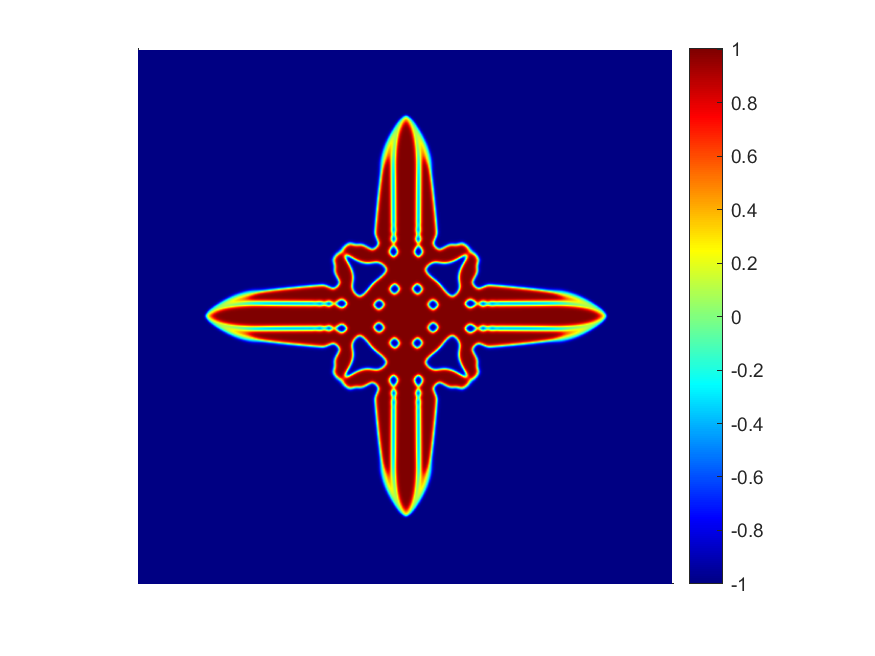}}
	\end{minipage}
	\begin{minipage}[t]{0.24\linewidth}
		\centerline{\includegraphics[scale=0.28]{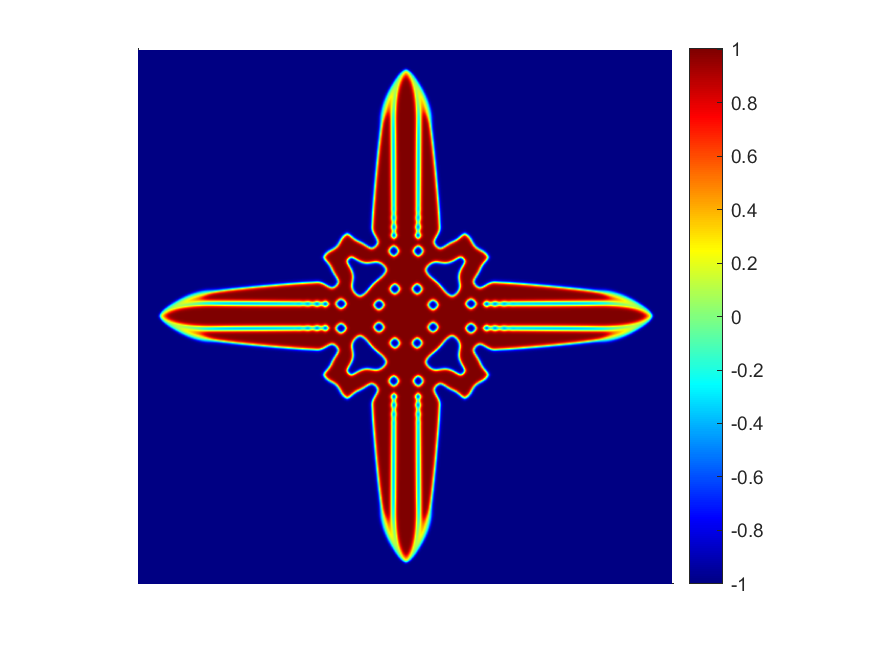}}
	\end{minipage}
	\centerline{(a) Phase-field evolution}
	\vskip 3mm
	\begin{minipage}[t]{0.24\linewidth}
		\centerline{\includegraphics[scale=0.28]{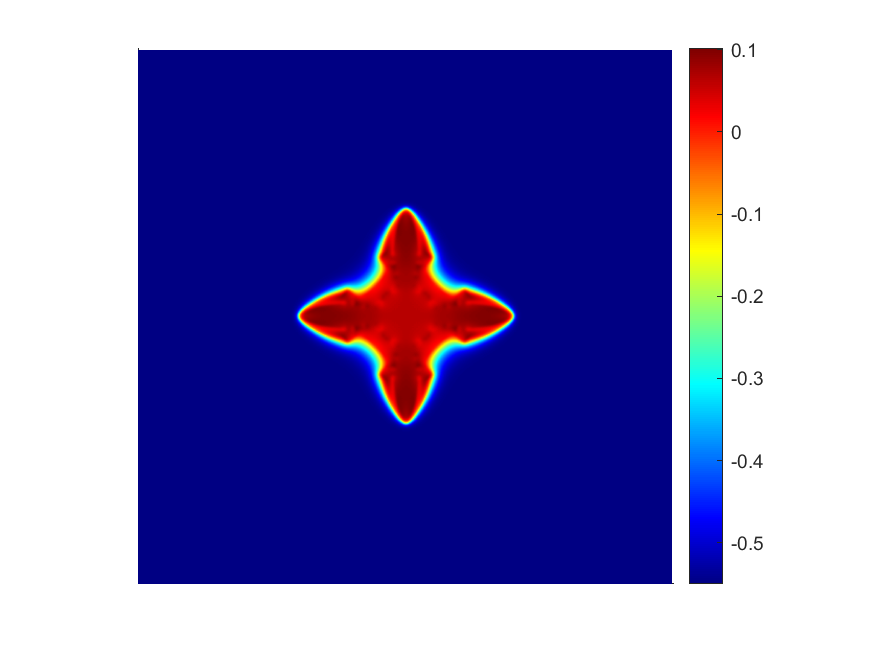}}
	\end{minipage}
	\begin{minipage}[t]{0.24\linewidth}
		\centerline{\includegraphics[scale=0.28]{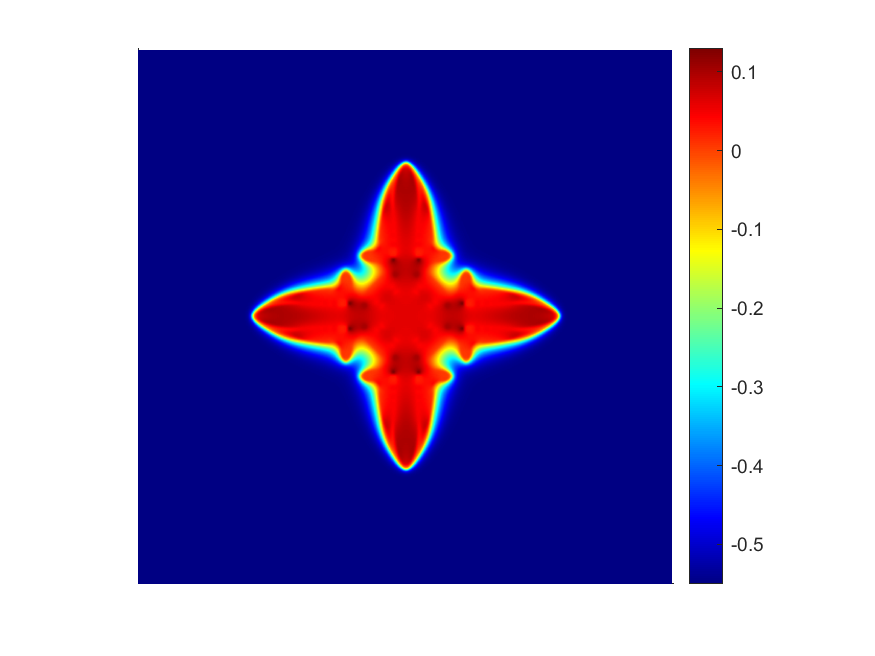}}
	\end{minipage}
	\begin{minipage}[t]{0.24\linewidth}
		\centerline{\includegraphics[scale=0.28]{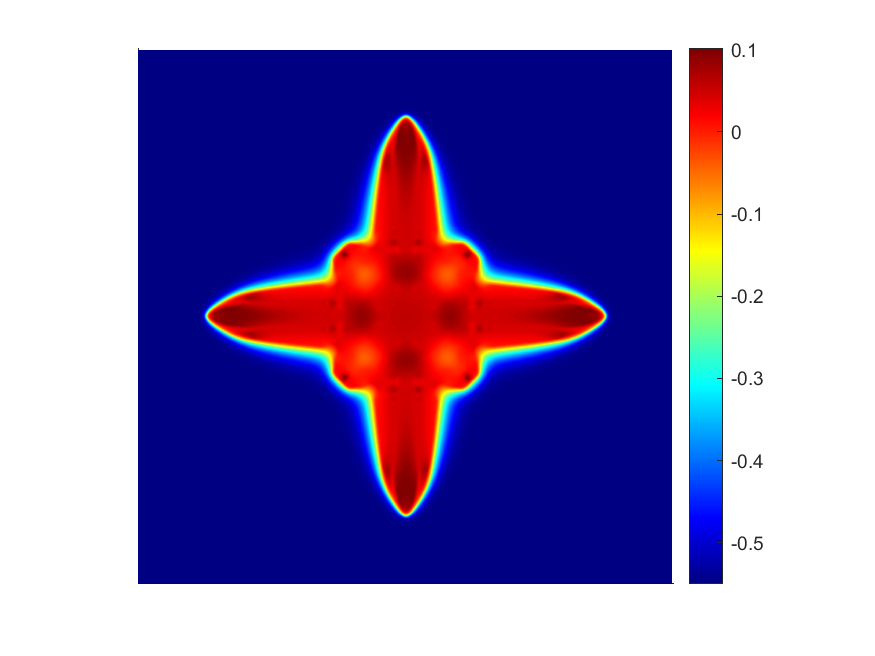}}
	\end{minipage}
	\begin{minipage}[t]{0.24\linewidth}
		\centerline{\includegraphics[scale=0.28]{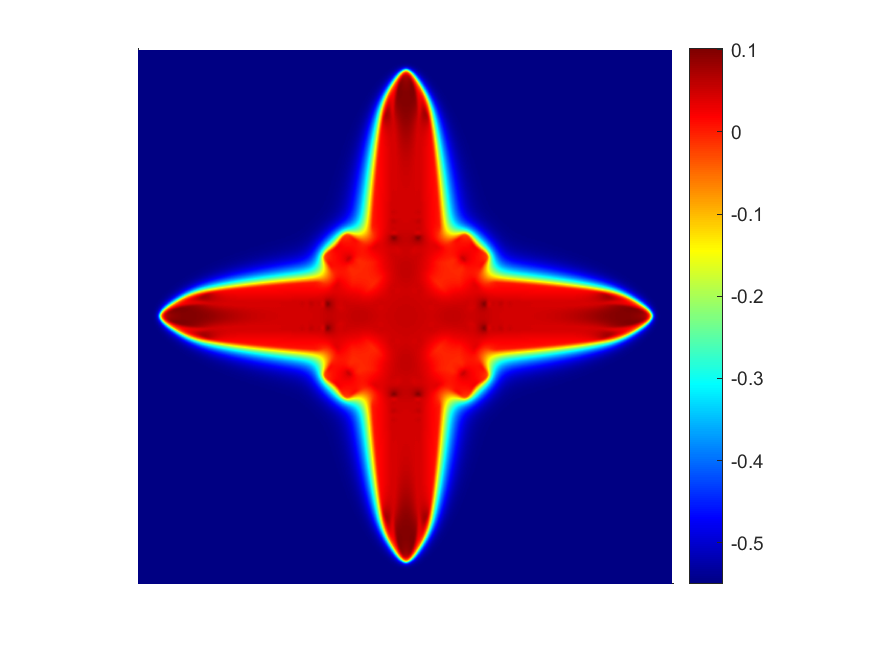}}
	\end{minipage}
	\centerline{(b) Temperature evolution}
	\caption{Example \ref{example3}: Evolution of phase-field variable $\phi$ and temperature $T$ in a 2D dendritic crystal growth simulation with fourfold anisotropy. Snapshots are taken at $t=10, 15, 20, 25$ with $K = 0.8$.
	}\label{fig_example3_K8}
\end{figure*}

\begin{figure*}[htbp]
	\begin{minipage}[t]{0.24\linewidth}
		\centerline{\includegraphics[scale=0.28]{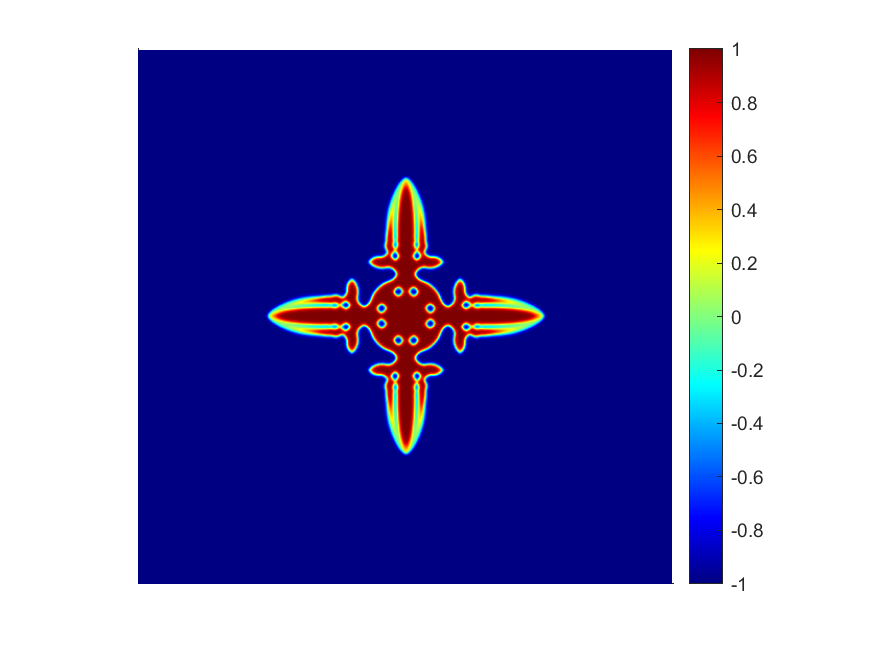}}
	\end{minipage}
	\begin{minipage}[t]{0.24\linewidth}
		\centerline{\includegraphics[scale=0.28]{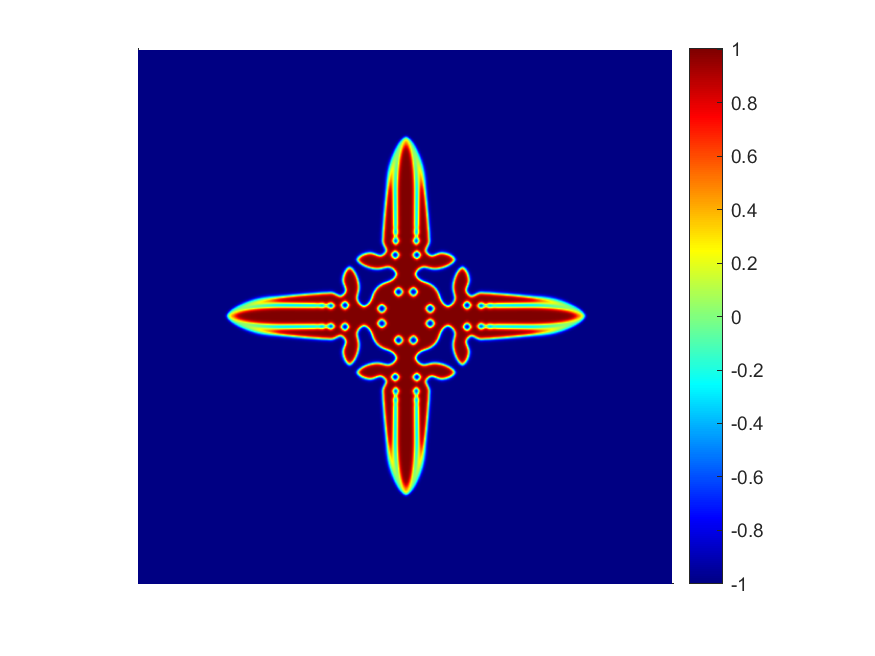}}
	\end{minipage}
	\begin{minipage}[t]{0.24\linewidth}
		\centerline{\includegraphics[scale=0.28]{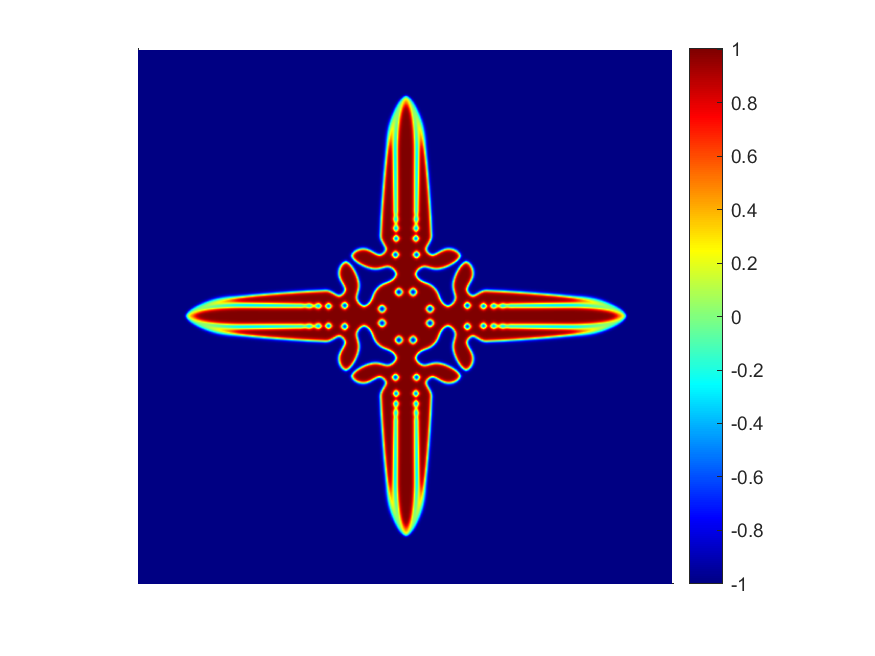}}
	\end{minipage}
	\begin{minipage}[t]{0.24\linewidth}
		\centerline{\includegraphics[scale=0.28]{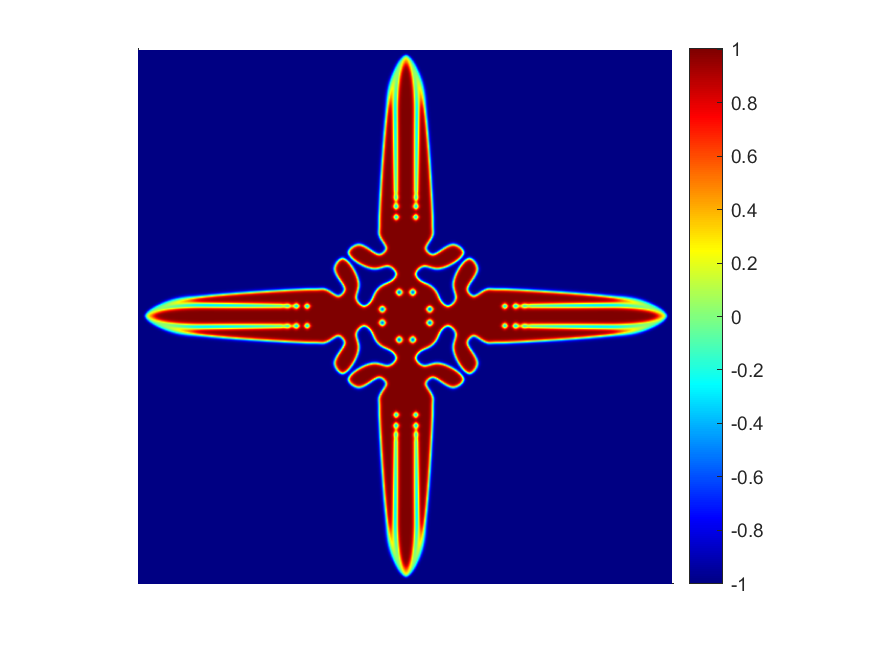}}
	\end{minipage}
	\centerline{(a) Phase-field evolution}
	\vskip 3mm
	\begin{minipage}[t]{0.24\linewidth}
		\centerline{\includegraphics[scale=0.28]{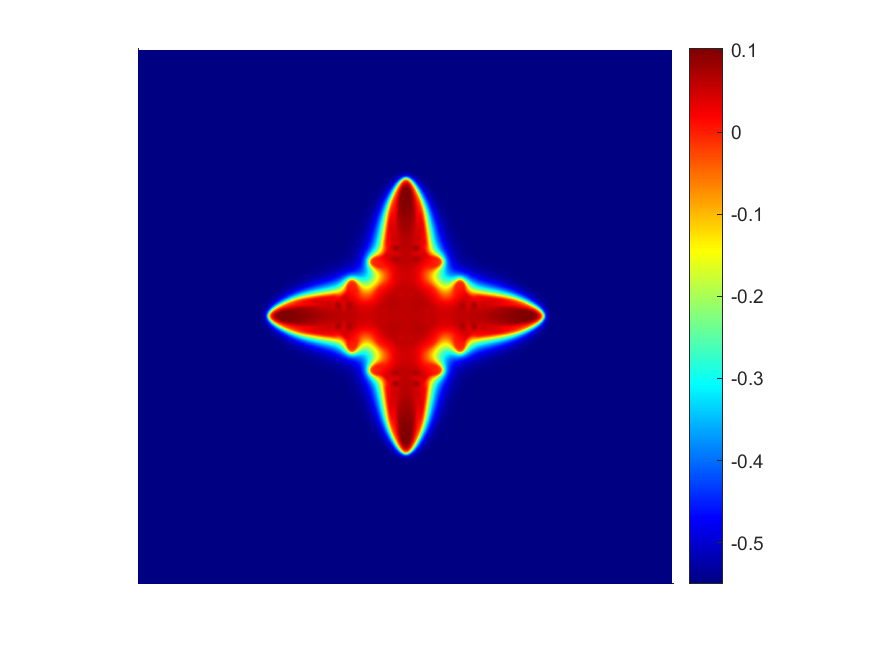}}
	\end{minipage}
	\begin{minipage}[t]{0.24\linewidth}
		\centerline{\includegraphics[scale=0.28]{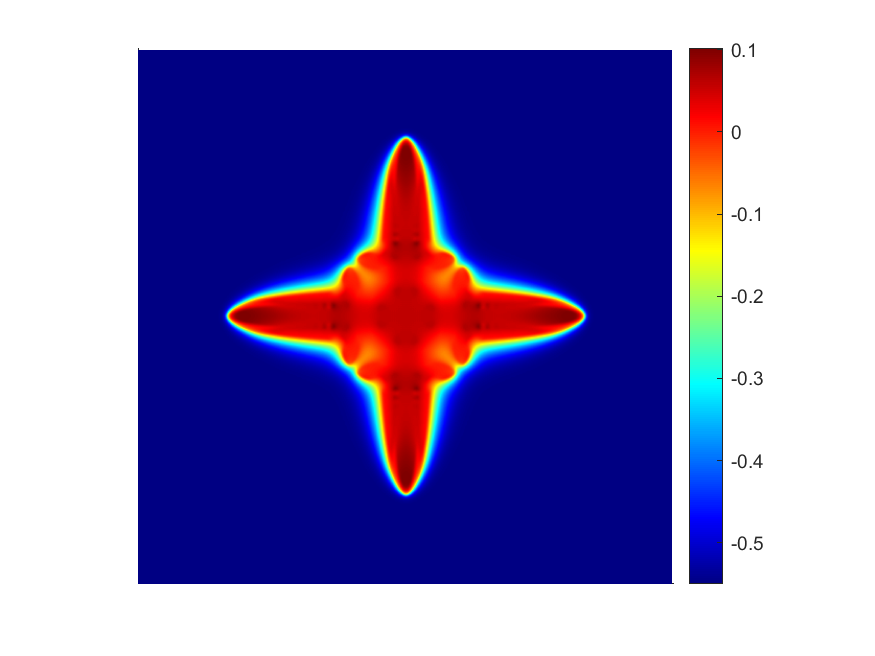}}
	\end{minipage}
	\begin{minipage}[t]{0.24\linewidth}
		\centerline{\includegraphics[scale=0.28]{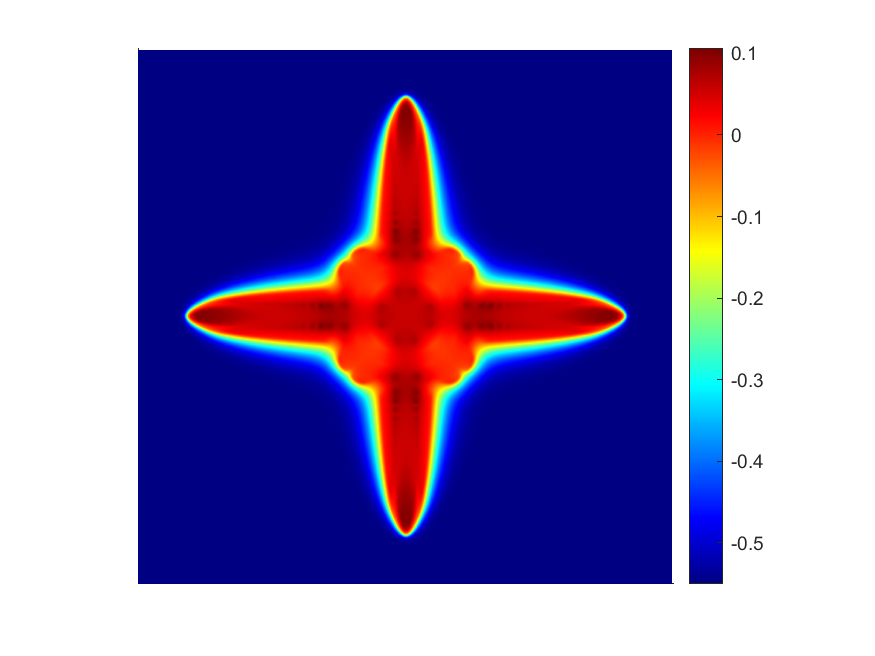}}
	\end{minipage}
	\begin{minipage}[t]{0.24\linewidth}
		\centerline{\includegraphics[scale=0.28]{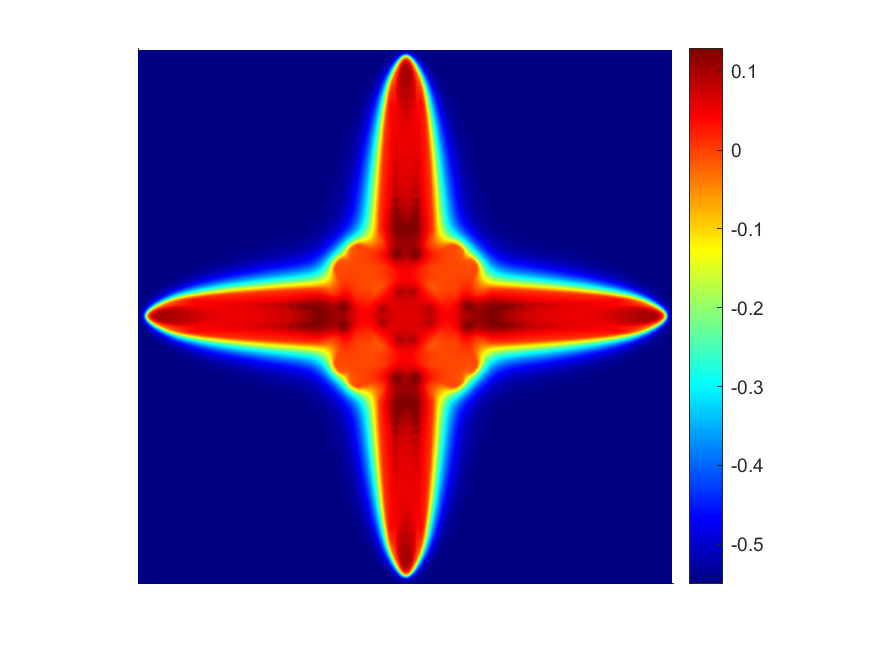}}
	\end{minipage}
	\centerline{(b) Temperature profiles}
	\caption{Example \ref{example3}: Evolution of phase-field variable $\phi$ and temperature $T$ in a 2D dendritic crystal growth simulation with fourfold anisotropy. Snapshots are taken at $t=15, 20, 25, 30$ with $K = 0.9$.
	}\label{fig_example3_K9}
\end{figure*}

\begin{figure*}[htbp]
	\begin{minipage}[t]{0.24\linewidth}
		\centerline{\includegraphics[scale=0.3]{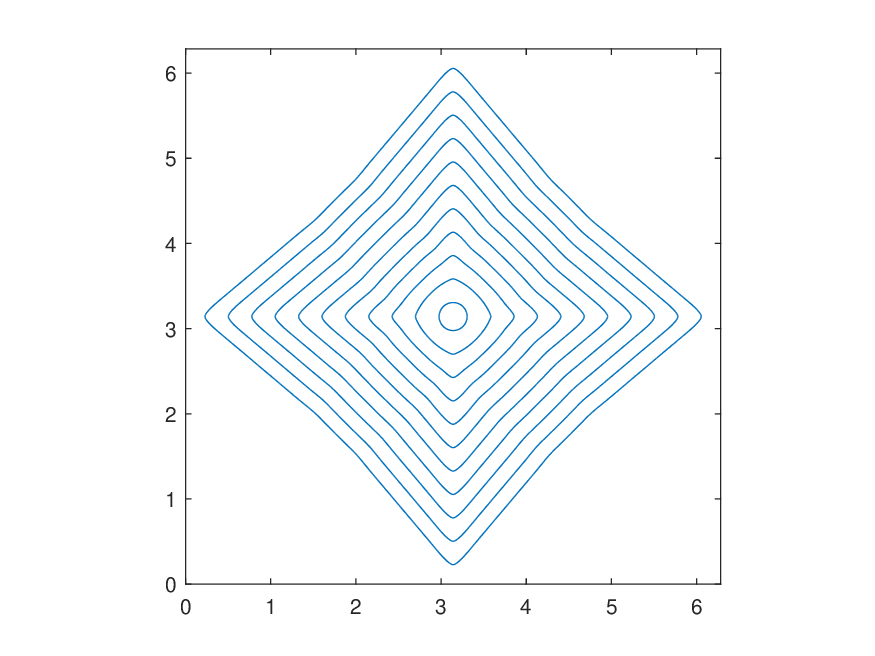}}
		\centerline{(a) $K=0.6$}
	\end{minipage}
	\begin{minipage}[t]{0.24\linewidth}
		\centerline{\includegraphics[scale=0.3]{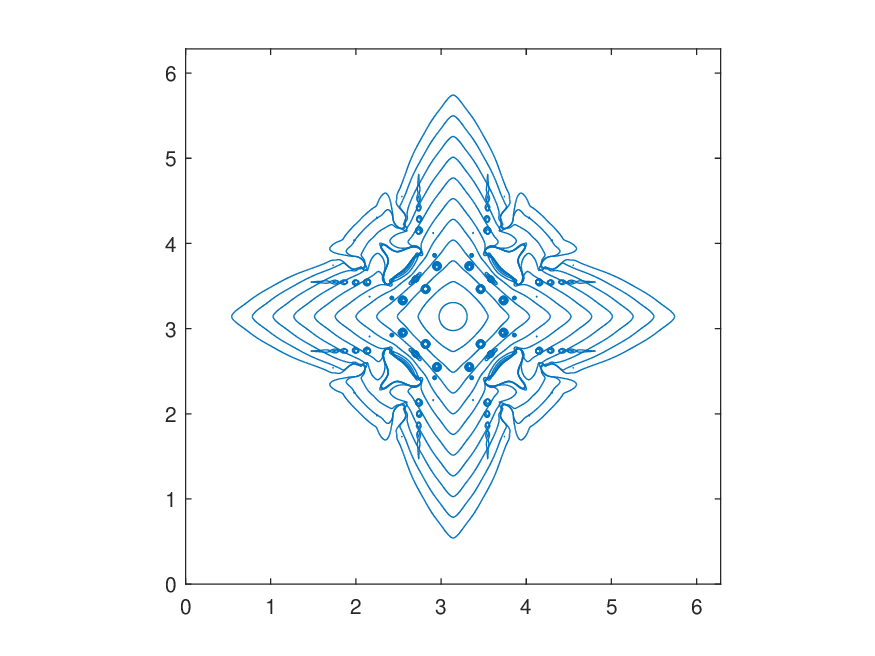}}
		\centerline{(b) $K=0.7$}
	\end{minipage}
	\begin{minipage}[t]{0.24\linewidth}
		\centerline{\includegraphics[scale=0.3]{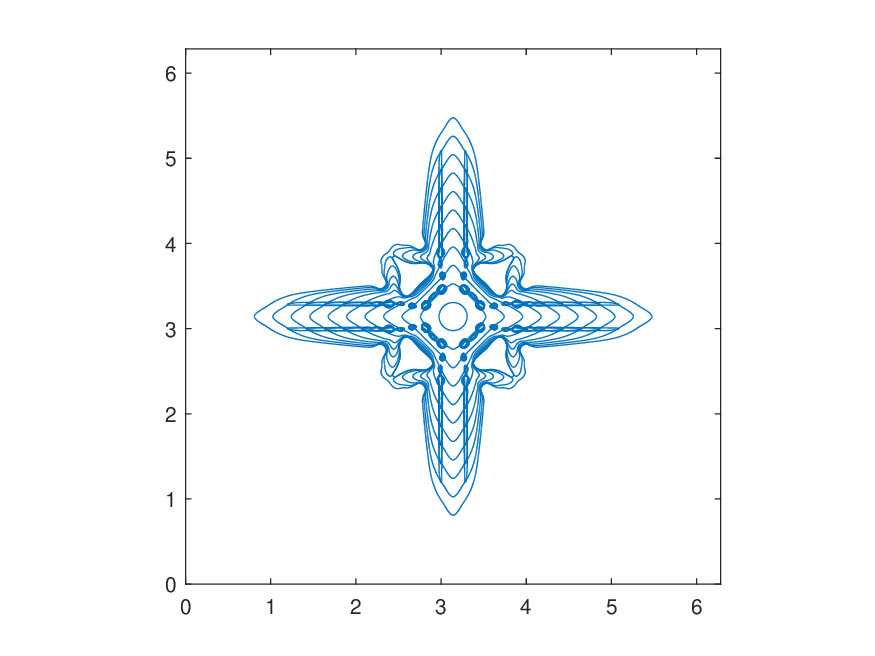}}
		\centerline{(c) $K=0.8$}
	\end{minipage}
	\begin{minipage}[t]{0.24\linewidth}
		\centerline{\includegraphics[scale=0.3]{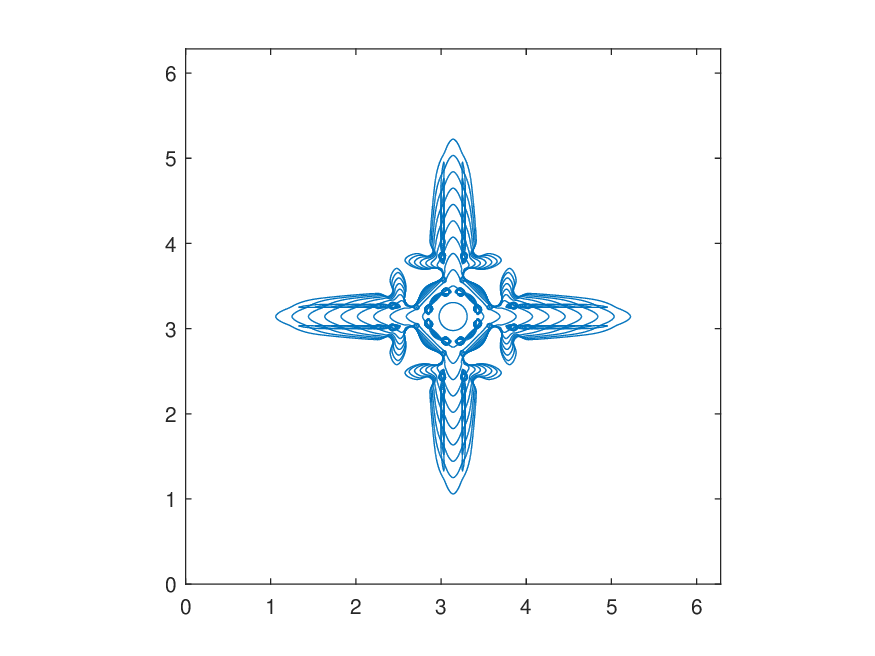}}
		\centerline{(d) $K=0.9$}
	\end{minipage}
	\caption{Example \ref{example3}: Evolution of the interface $\phi=0$ from $t=0$ to $t=20$	for different values of the latent heat coefficient $K$, with snapshots taken every 2 time units.
	}\label{fig_example3_contour}
\end{figure*}

\begin{figure*}[htbp]
	\begin{minipage}[t]{0.49\linewidth}
		\centerline{\includegraphics[scale=0.5]{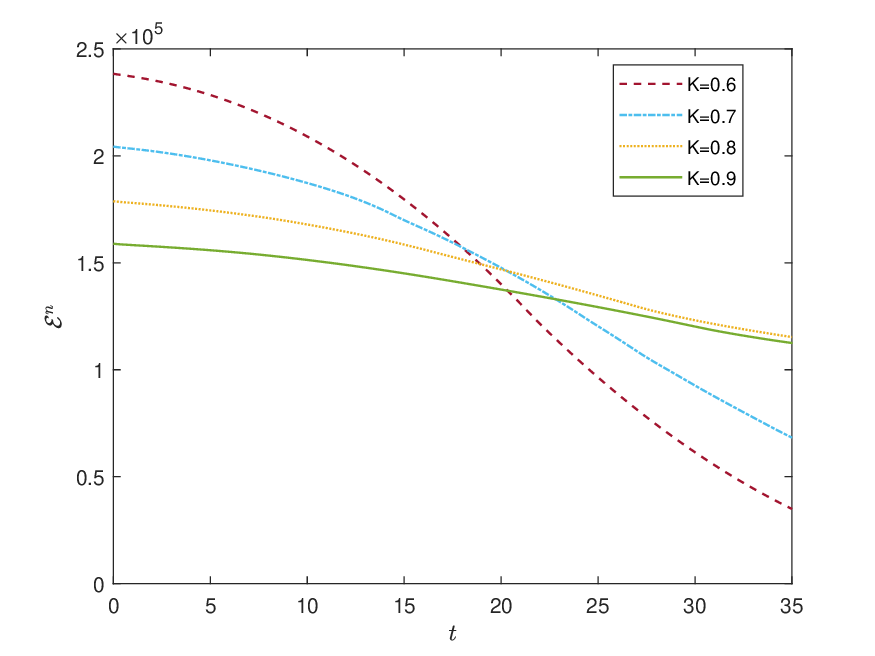}}
		\centerline{(a) Temporal behavior of $\mathcal{E}^{n}$}
	\end{minipage}
	\begin{minipage}[t]{0.49\linewidth}
		\centerline{\includegraphics[scale=0.5]{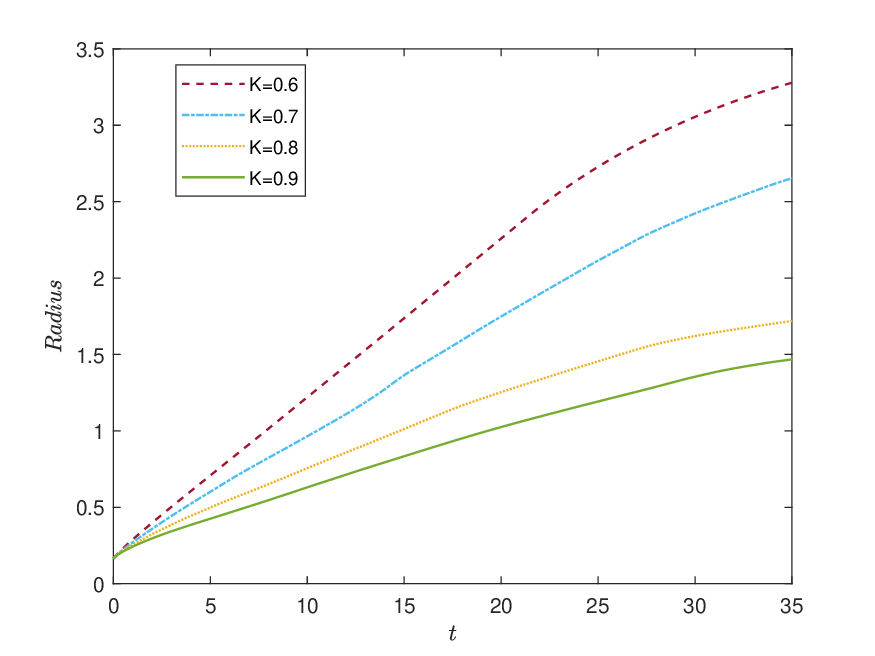}}
		\centerline{(b) Temporal evolution of the radius}
	\end{minipage}
	\caption{Example \ref{example3}: Temporal behavior of the modified energy and characteristic crystal radius under different latent heat parameter $K$.
	}\label{fig_example3_1}
\end{figure*}

\begin{figure*}[htbp]
	\begin{minipage}[t]{0.24\linewidth}
		\centerline{\includegraphics[scale=0.28]{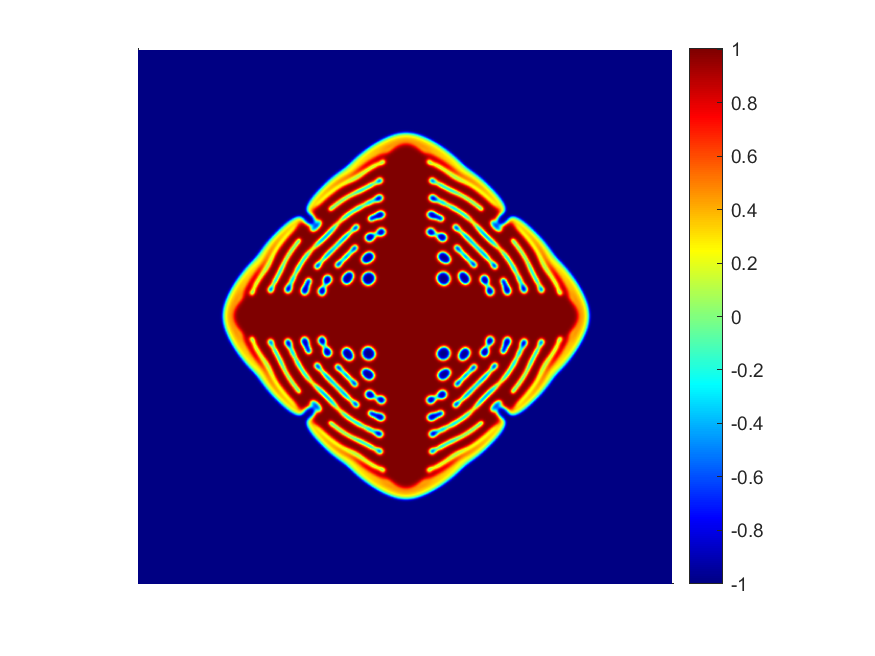}}
	\end{minipage}
	\begin{minipage}[t]{0.24\linewidth}
		\centerline{\includegraphics[scale=0.28]{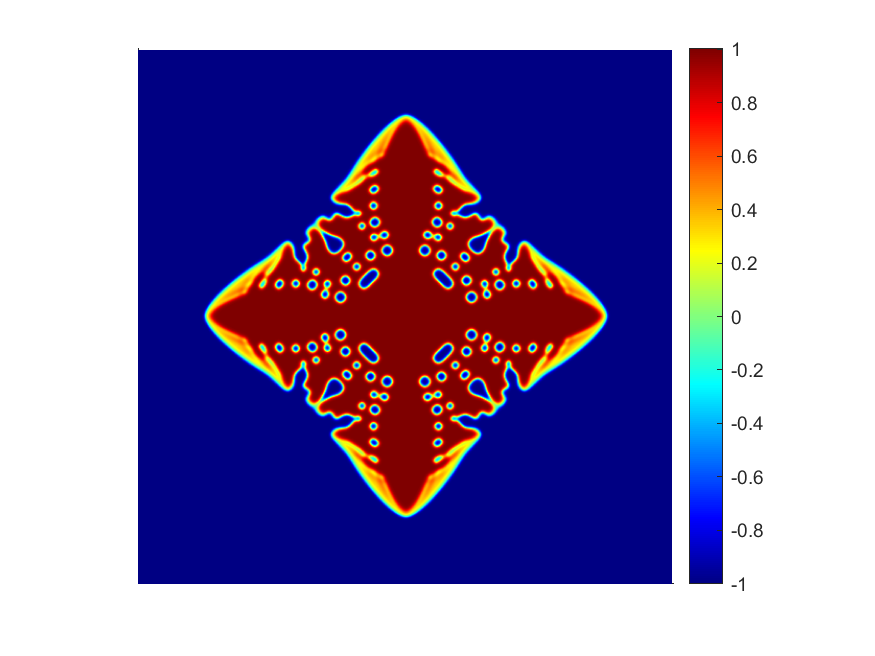}}
	\end{minipage}
	\begin{minipage}[t]{0.24\linewidth}
		\centerline{\includegraphics[scale=0.28]{phi20K7.eps}}
	\end{minipage}
	\begin{minipage}[t]{0.24\linewidth}
		\centerline{\includegraphics[scale=0.28]{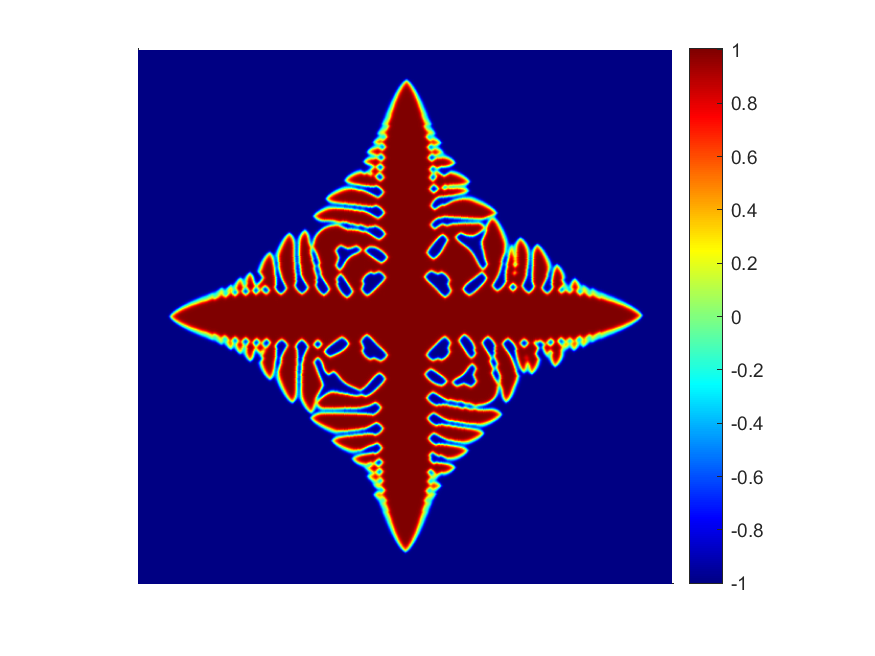}}
	\end{minipage}
	\centerline{(a) Snapshots of $\phi$ at $t=20$}
	\vskip 3mm
	\begin{minipage}[t]{0.24\linewidth}
		\centerline{\includegraphics[scale=0.3]{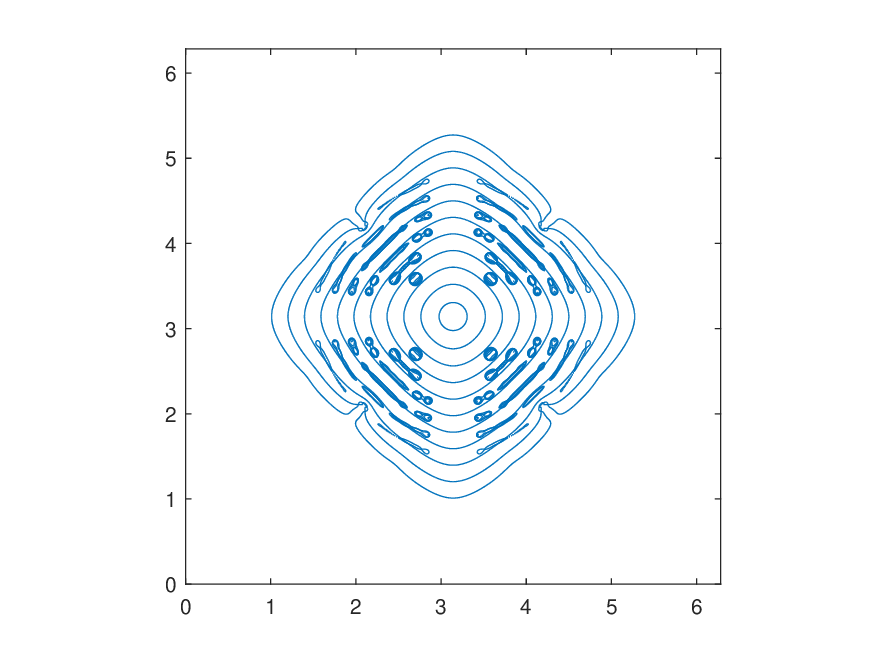}}
	\end{minipage}
	\begin{minipage}[t]{0.24\linewidth}
		\centerline{\includegraphics[scale=0.3]{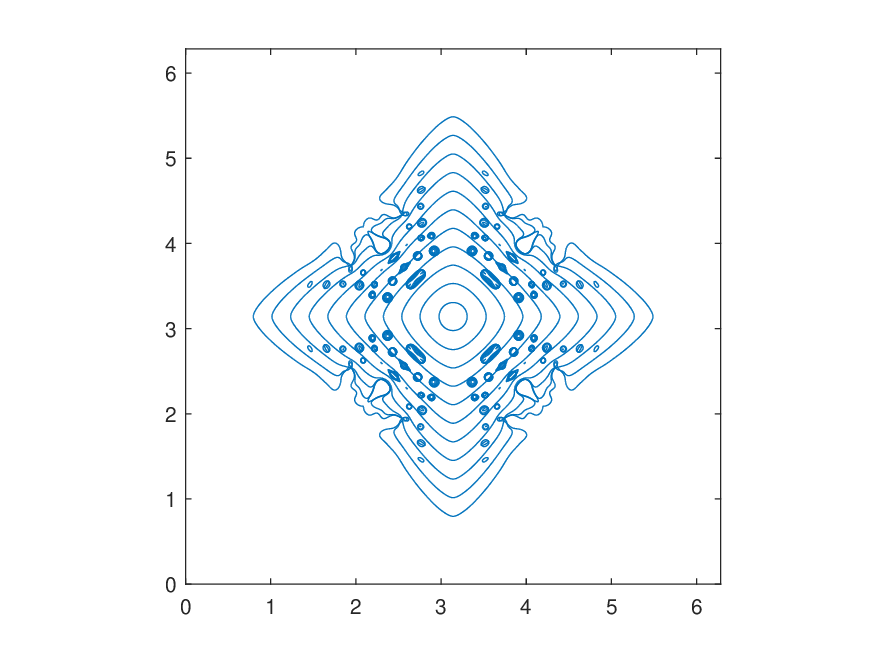}}
	\end{minipage}
	\begin{minipage}[t]{0.24\linewidth}
		\centerline{\includegraphics[scale=0.3]{contourK7.eps}}
	\end{minipage}
	\begin{minipage}[t]{0.24\linewidth}
		\centerline{\includegraphics[scale=0.3]{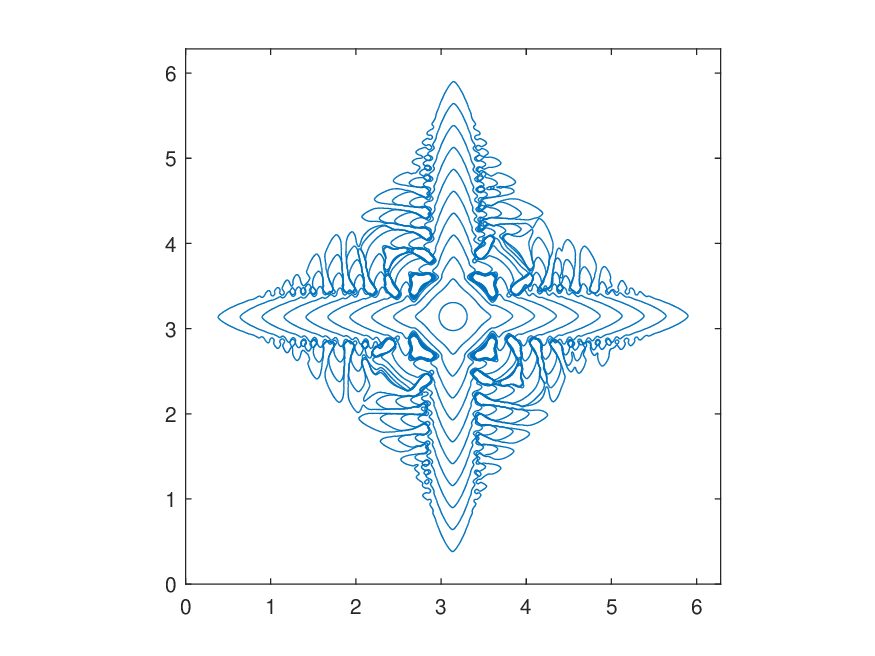}}
	\end{minipage}
	\centerline{(b) The contour of the interface $\phi = 0$ from $t=0$ to $t=20$}
	\caption{Example \ref{example3}: Morphologies of 2D dendritic crystals with fourfold anisotropy for different values of the anisotropic strength parameter $\epsilon_4$. From left to right: $\epsilon_{4}=0.01,\ \epsilon_{4}=0.02,\ \epsilon_{4}=0.04,$ and $\epsilon_{4}=0.08$.
	}\label{fig_example3_e4}
\end{figure*}

\begin{figure*}[htbp]
	\centering
	\begin{minipage}[t]{0.45\linewidth}
		\centerline{\includegraphics[scale=0.5]{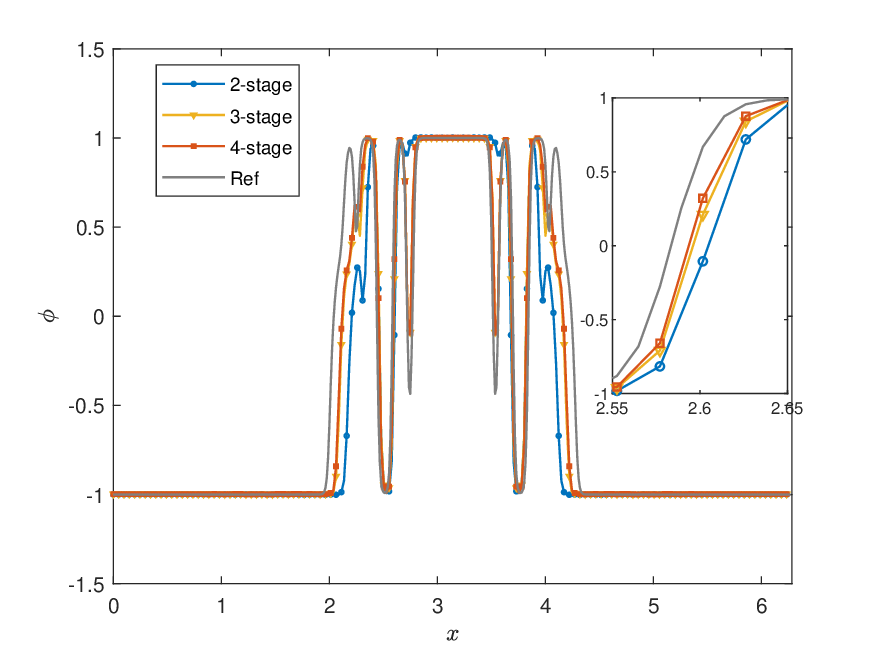}}
		\centerline{(a) phase curves}
	\end{minipage}
	\begin{minipage}[t]{0.45\linewidth}
		\centerline{\includegraphics[scale=0.5]{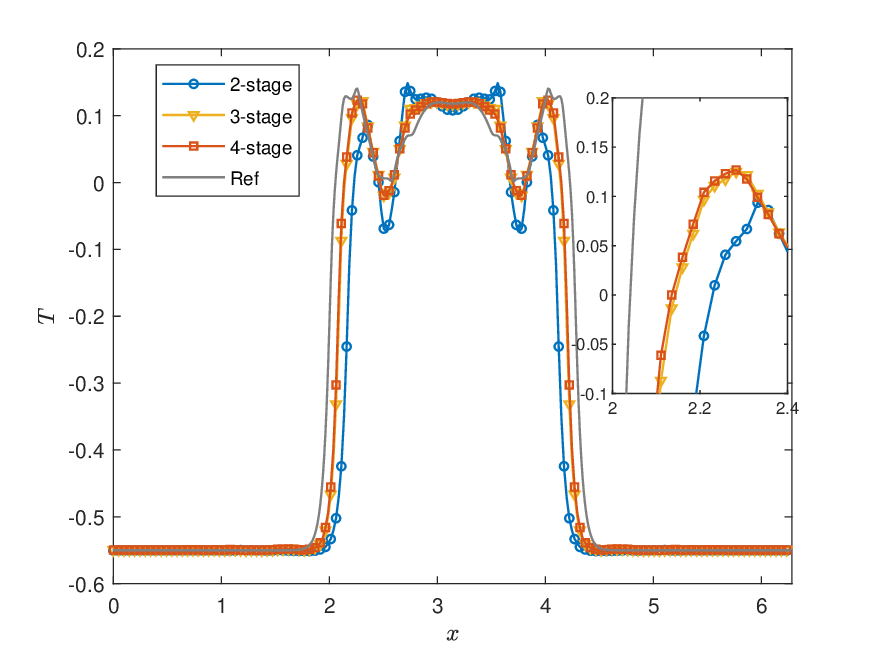}}
		\centerline{(b) temperature curves}
	\end{minipage}
	\caption{Example \ref{example3}: Profiles of phase-field variable $\phi$ (left) and temperature $T$ (right) along the cross-section $y=2.0126$ at $t=20$ obtained with the $q$-stage ($q=2,3,4$) Gauss time-stepping method.
	}\label{fig_example3_compare}
\end{figure*}
 
\begin{example}\label{example4}
\upshape
We next turn to the sixfold anisotropy model, corresponding to $m=6$ in \eqref{kappa}. To compare with the fourfold anisotropy case, we choose the same initial conditions and parameters as in \eqref{ex3_initial}-\eqref{ex3_params}, while setting $\epsilon_{4}=0.04$ and $\theta_0=90^{\circ}$.

\end{example}

Figures~\ref{fig_example4_K6}-\ref{fig_example4_K9} plot the numerical results for $K=0.6,\ 0.7,\ 0.8,$ and $0.9$, respectively. 
Starting from a circular nucleus, the crystal gradually evolves into a sixfold dendritic pattern. As $K$ increases, the dendrite branches become thinner and the tip structures become more pronounced. In addition, more secondary branches appear along the primary ones, leading to more complex snowflake-like patterns. The temperature field exhibits a similar morphology to the phase variable $\phi$, since latent heat is released mainly in the interfacial region. This behavior is consistent with that observed in the fourfold anisotropy case.
Figure~\ref{fig_example4} shows the evolution of the modified energy $\mathcal{E}^{n}$ and the corresponding crystal size.
For all tested values of $K$, the modified energy decreases monotonically throughout the simulation. Moreover, larger values of $K$ lead to slower energy dissipation and reduced crystal growth rates. These observations are consistent with the results reported in \cite{kobayashi1993modeling,karma1998,yang2020efficient,li2026efficient}.

Finally, we investigate the influence of the anisotropy order $m$ for $m=4,5,6,$ and $7$, while fixing $\epsilon_{4}=0.04$, $\theta_0=0^{\circ}$, and $K=0.8$. The corresponding numerical results are presented in Figure~\ref{fig_example4_m}. As expected, the number of primary dendritic branches increases with $m$, and the corresponding temperature distributions exhibit the same symmetry as the phase-field profiles.
 
\begin{figure*}[htbp]
	\begin{minipage}[t]{0.24\linewidth}
		\centerline{\includegraphics[scale=0.28]{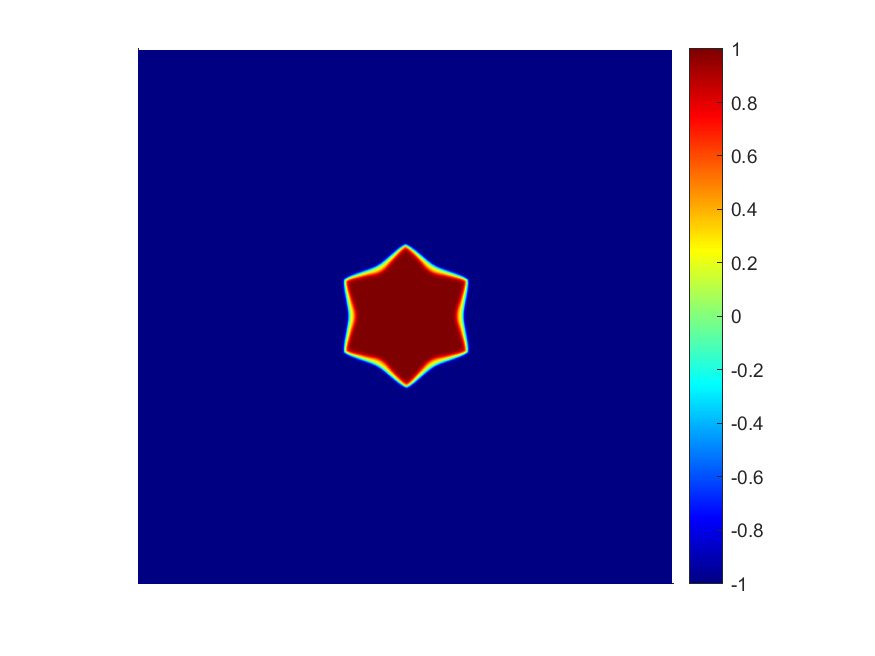}}
	\end{minipage}
	\begin{minipage}[t]{0.24\linewidth}
		\centerline{\includegraphics[scale=0.28]{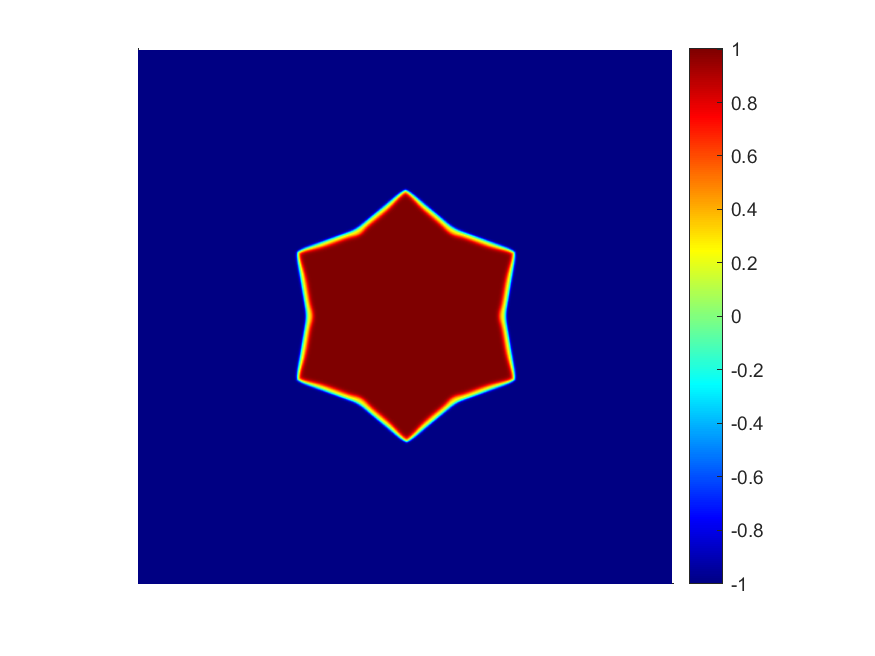}}
	\end{minipage}
	\begin{minipage}[t]{0.24\linewidth}
		\centerline{\includegraphics[scale=0.28]{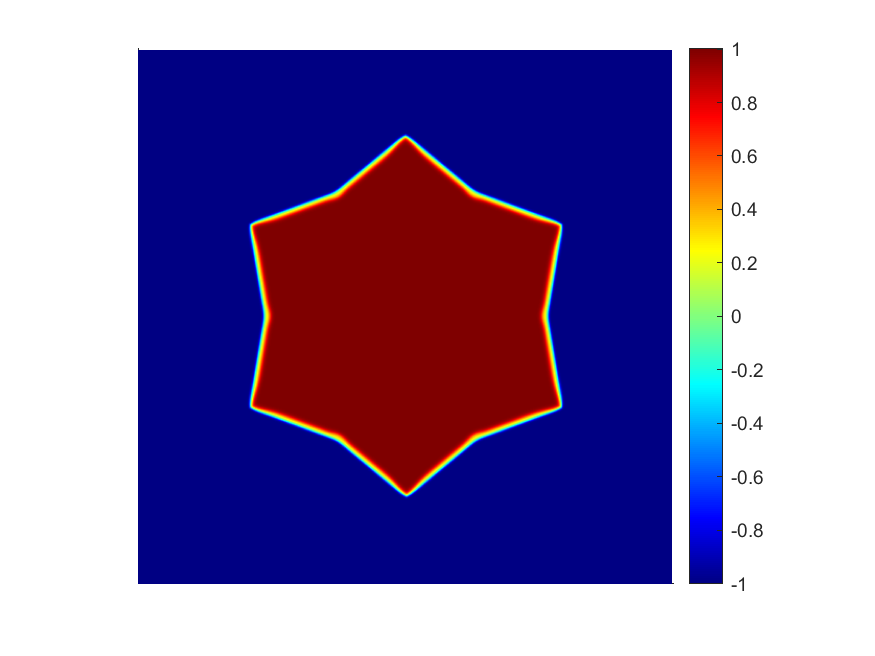}}
	\end{minipage}
	\begin{minipage}[t]{0.24\linewidth}
		\centerline{\includegraphics[scale=0.28]{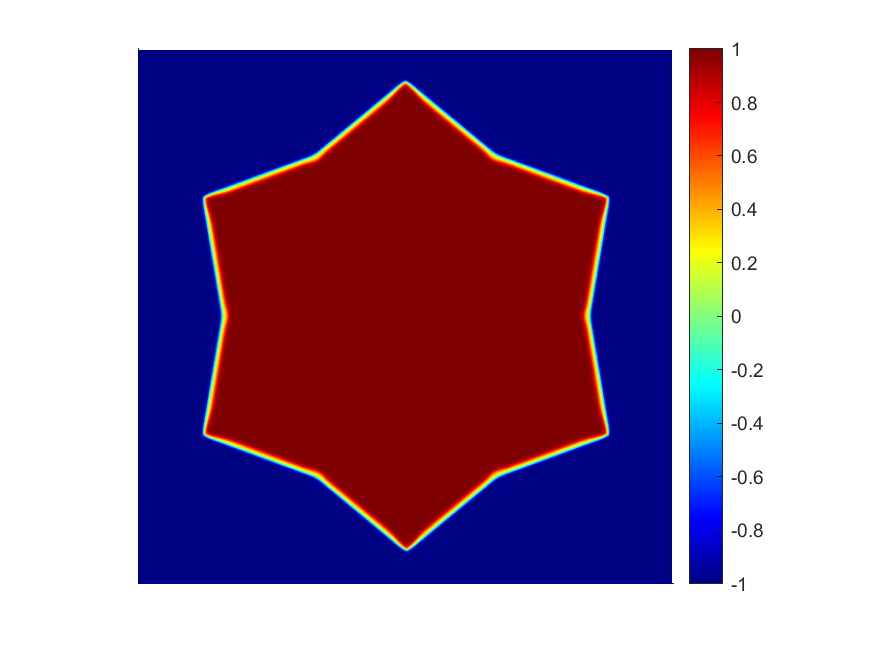}}
	\end{minipage}
	\centerline{(a) Phase-field evolution}
	\vskip 3mm
	\begin{minipage}[t]{0.24\linewidth}
		\centerline{\includegraphics[scale=0.28]{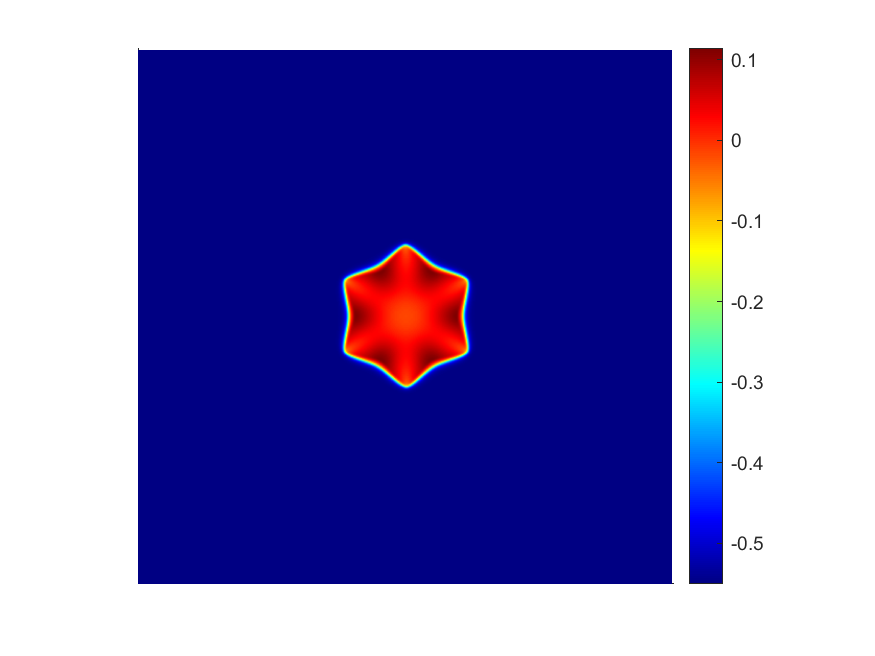}}
	\end{minipage}
	\begin{minipage}[t]{0.24\linewidth}
		\centerline{\includegraphics[scale=0.28]{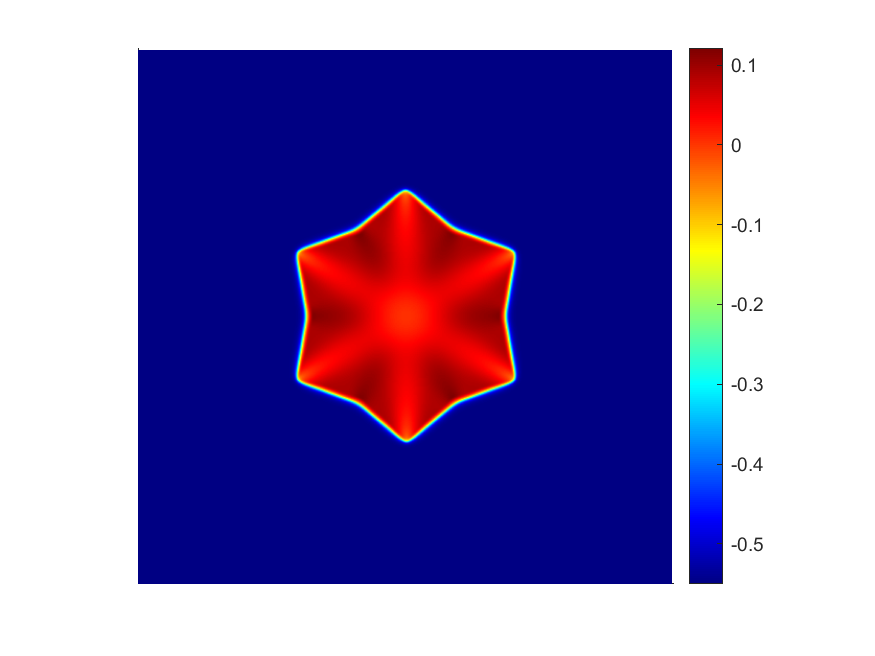}}
	\end{minipage}
	\begin{minipage}[t]{0.24\linewidth}
		\centerline{\includegraphics[scale=0.28]{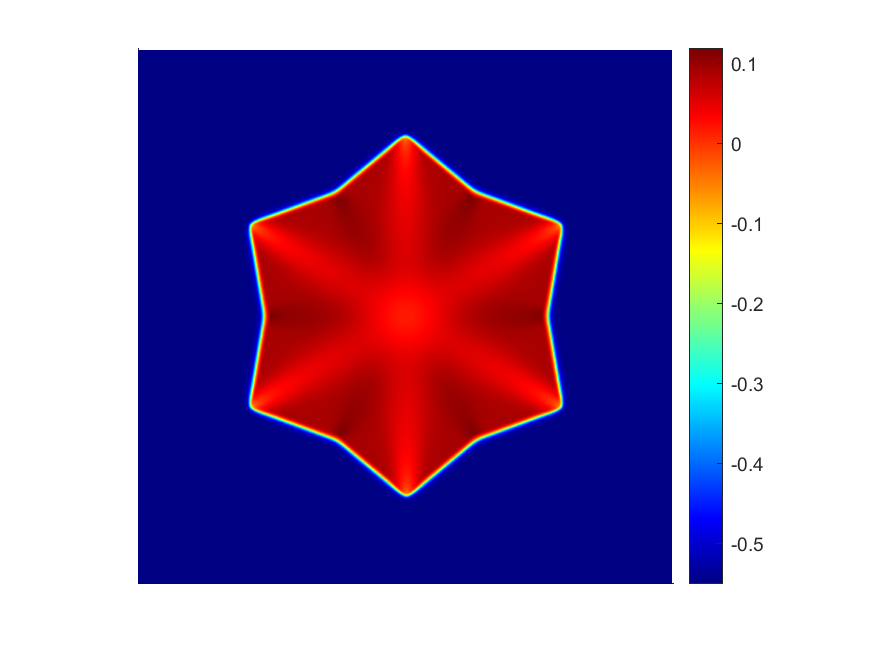}}
	\end{minipage}
	\begin{minipage}[t]{0.24\linewidth}
		\centerline{\includegraphics[scale=0.28]{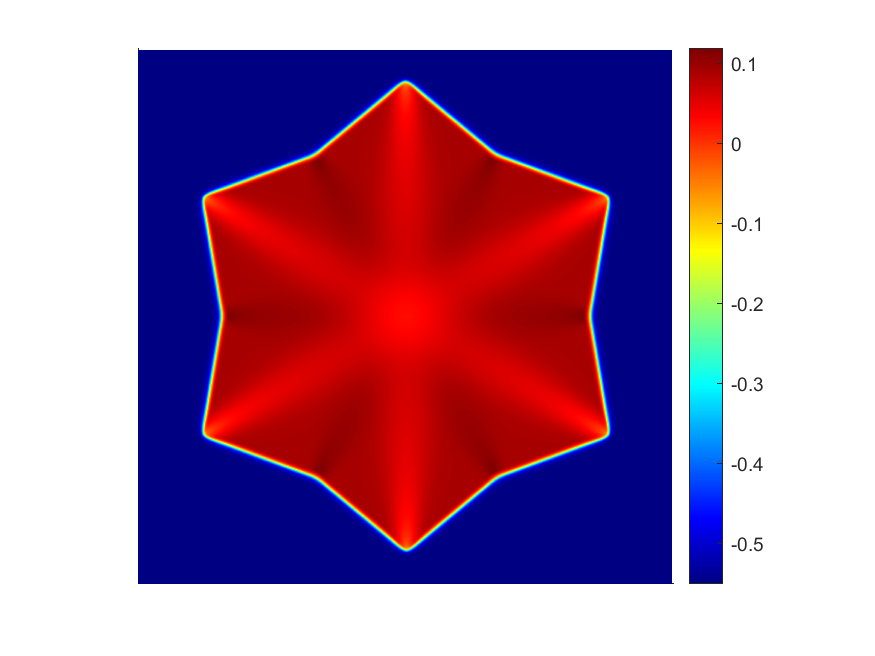}}
	\end{minipage}
	\centerline{(b) Temperature profiles}
	\caption{Example \ref{example4}: Evolution of phase-field variable $\phi$ and temperature $T$ in a 2D dendritic crystal growth simulation with sixfold anisotropy. Snapshots are taken at $t=5, 10, 15, 20$ with $K = 0.6$.
	}\label{fig_example4_K6}
\end{figure*}

\begin{figure*}[htbp]
	\begin{minipage}[t]{0.24\linewidth}
		\centerline{\includegraphics[scale=0.28]{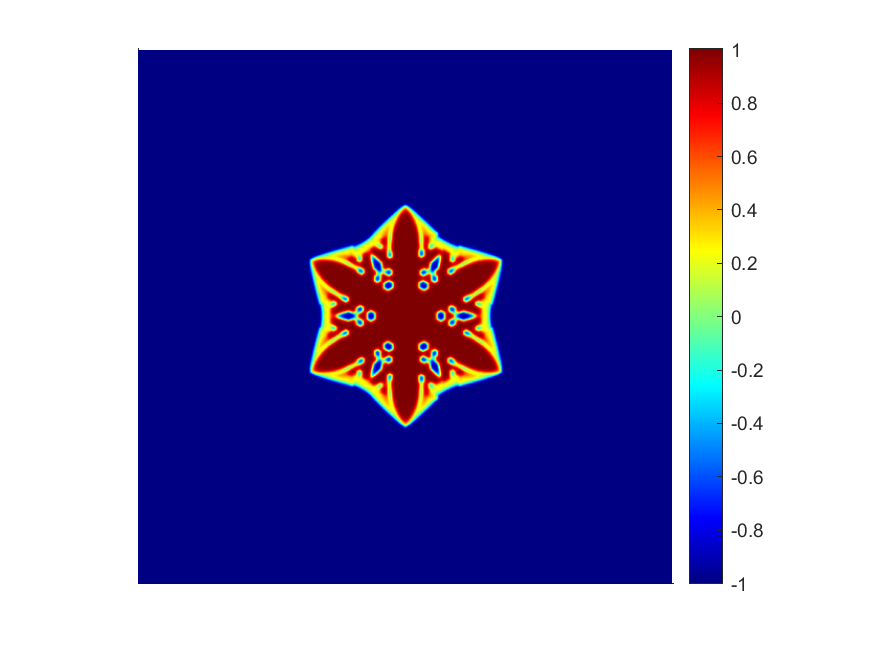}}
	\end{minipage}
	\begin{minipage}[t]{0.24\linewidth}
		\centerline{\includegraphics[scale=0.28]{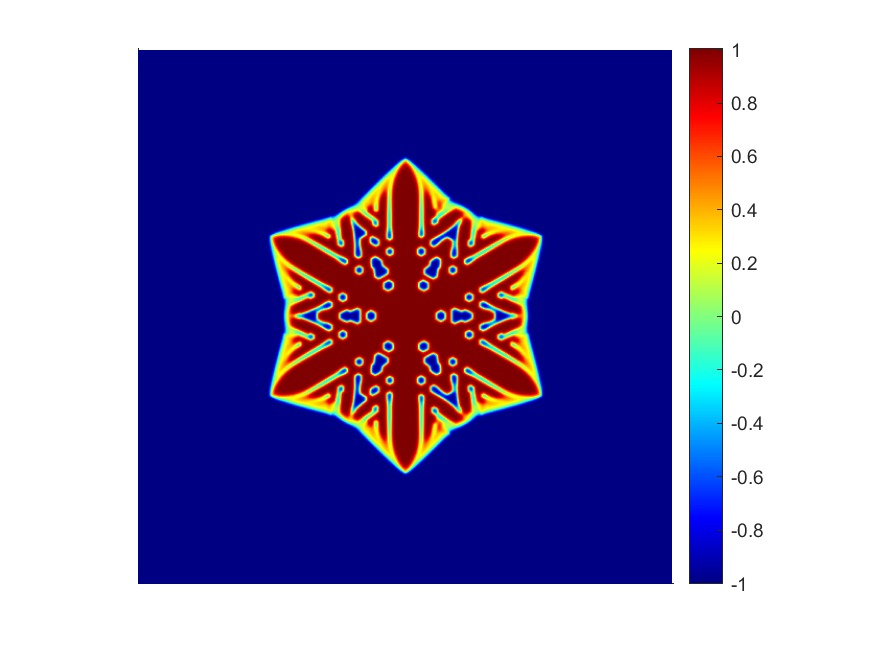}}
	\end{minipage}
	\begin{minipage}[t]{0.24\linewidth}
		\centerline{\includegraphics[scale=0.28]{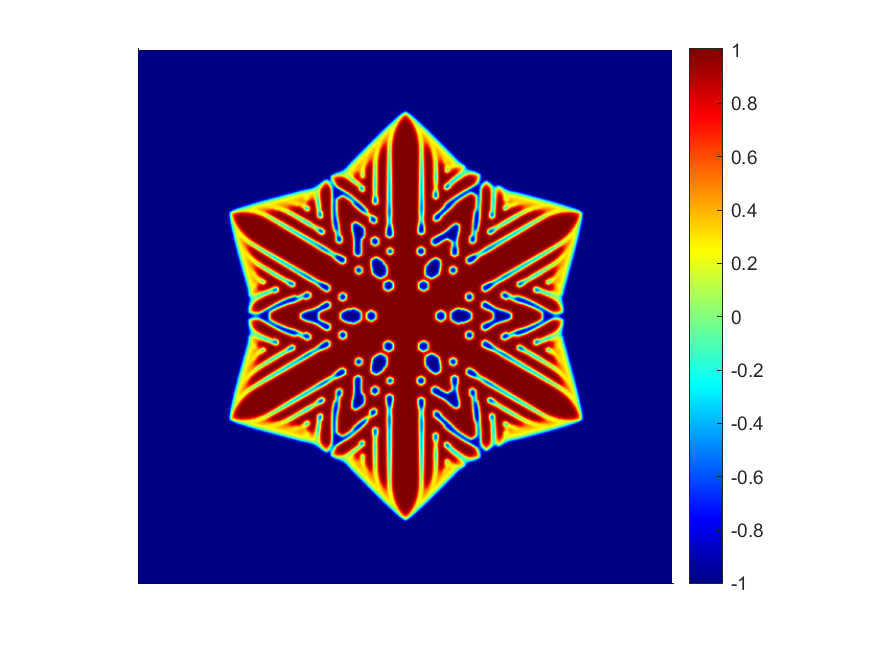}}
	\end{minipage}
	\begin{minipage}[t]{0.24\linewidth}
		\centerline{\includegraphics[scale=0.28]{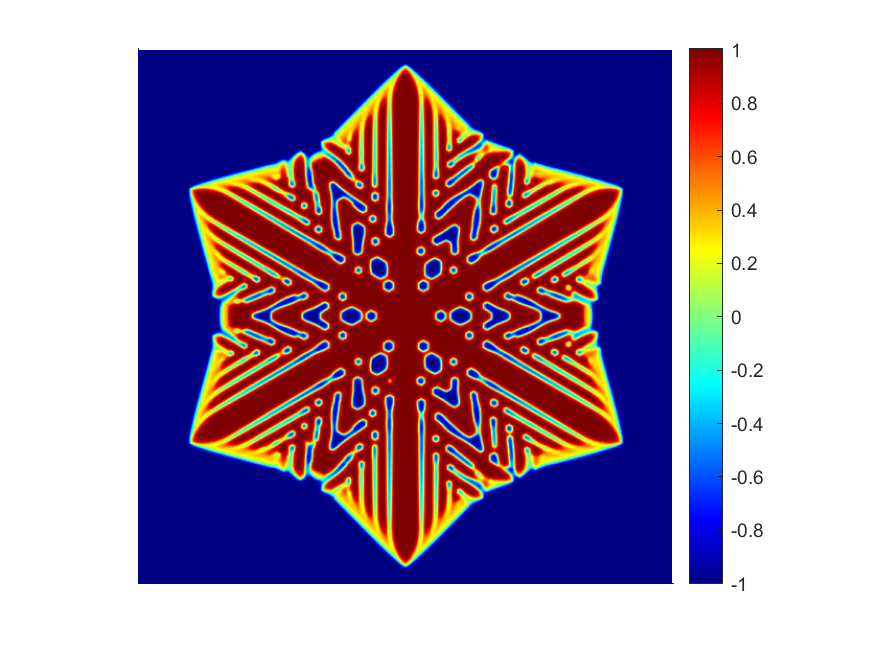}}
	\end{minipage}
	\centerline{(a) Phase-field evolution}
	\vskip 3mm
	\begin{minipage}[t]{0.24\linewidth}
		\centerline{\includegraphics[scale=0.28]{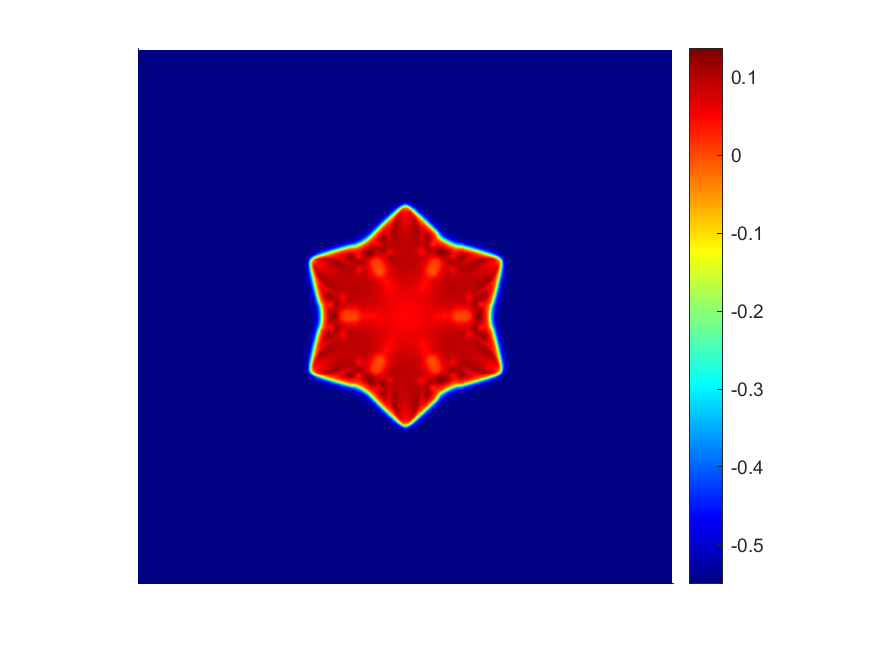}}
	\end{minipage}
	\begin{minipage}[t]{0.24\linewidth}
		\centerline{\includegraphics[scale=0.28]{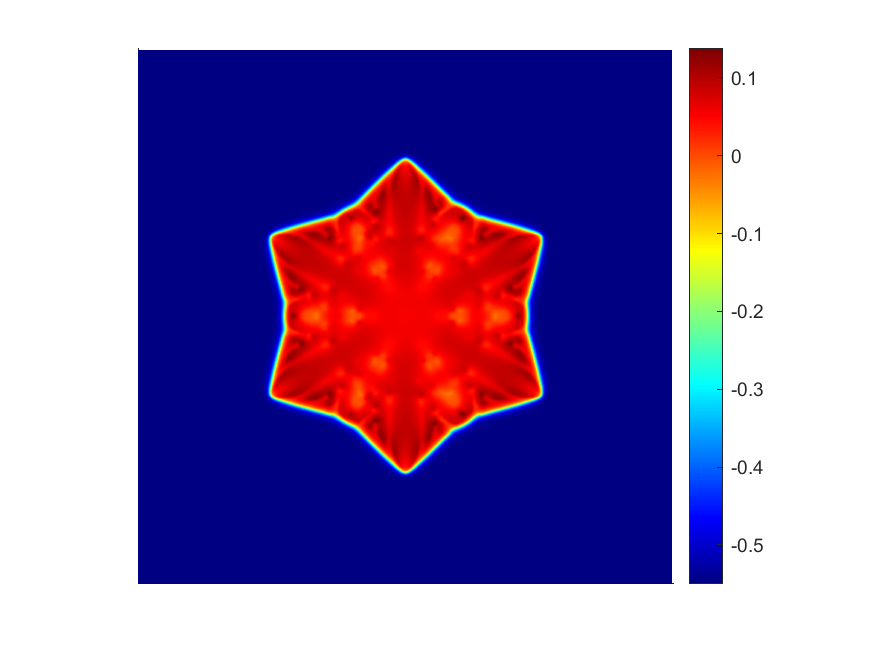}}
	\end{minipage}
	\begin{minipage}[t]{0.24\linewidth}
		\centerline{\includegraphics[scale=0.28]{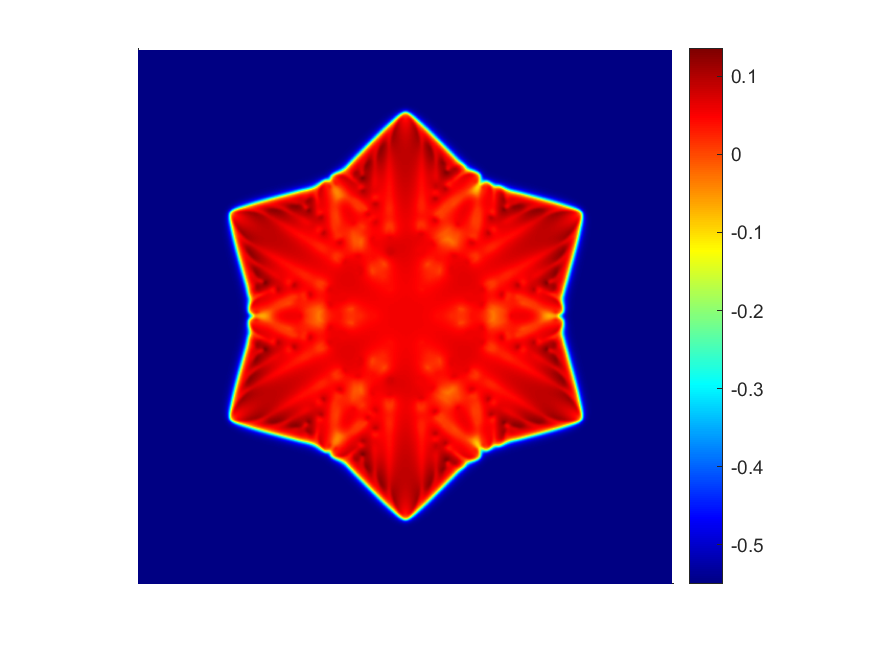}}
	\end{minipage}
	\begin{minipage}[t]{0.24\linewidth}
		\centerline{\includegraphics[scale=0.28]{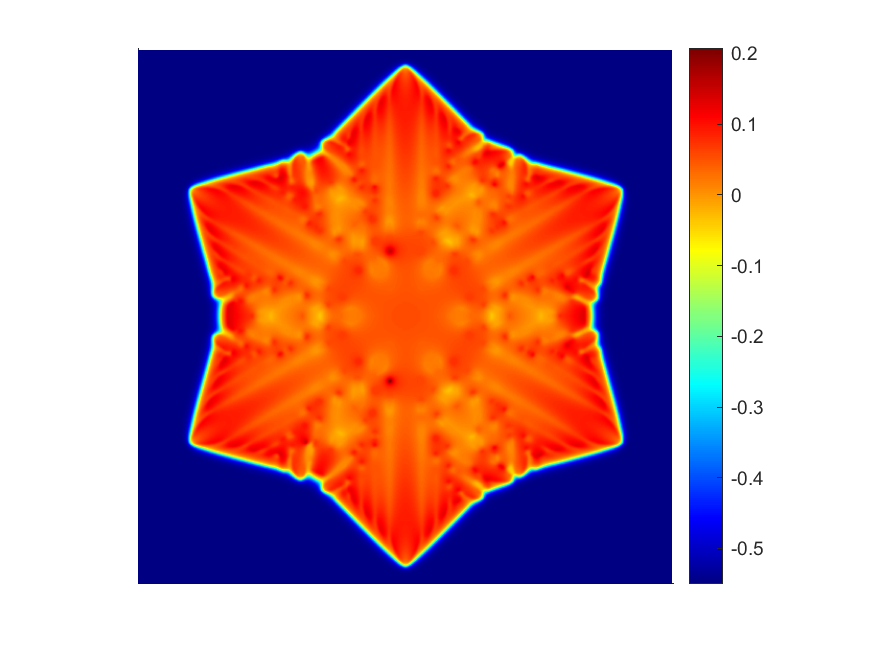}}
	\end{minipage}
	\centerline{(b) Temperature profiles}
	\caption{Example \ref{example4}: Evolution of phase-field variable $\phi$ and temperature $T$ in a 2D dendritic crystal growth simulation with sixfold anisotropy. Snapshots are taken at $t=10, 15, 20, 25$ with $K = 0.7$.
	}\label{fig_example4_K7}
\end{figure*}

\begin{figure*}[htbp]
	\begin{minipage}[t]{0.24\linewidth}
		\centerline{\includegraphics[scale=0.28]{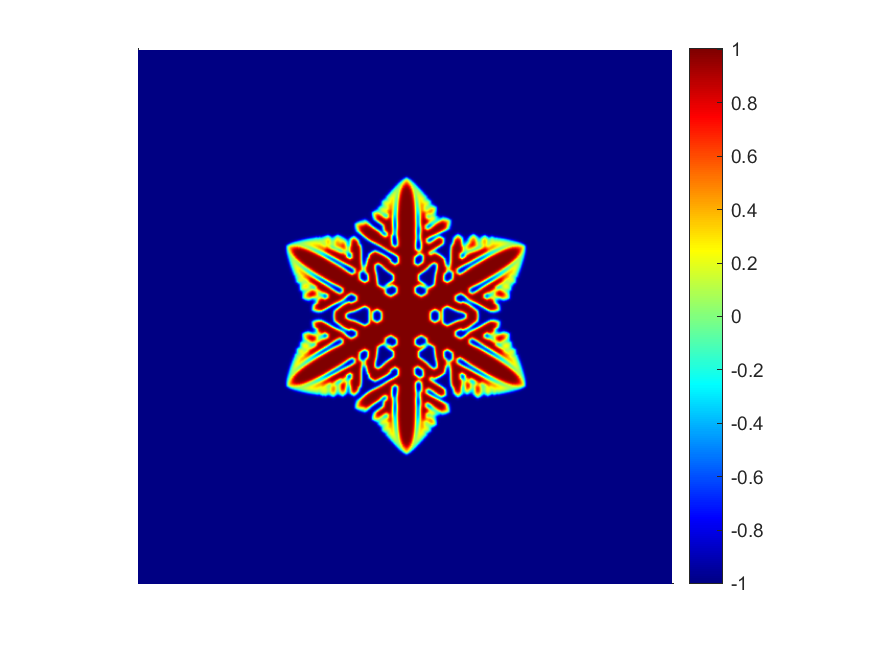}}
	\end{minipage}
	\begin{minipage}[t]{0.24\linewidth}
		\centerline{\includegraphics[scale=0.28]{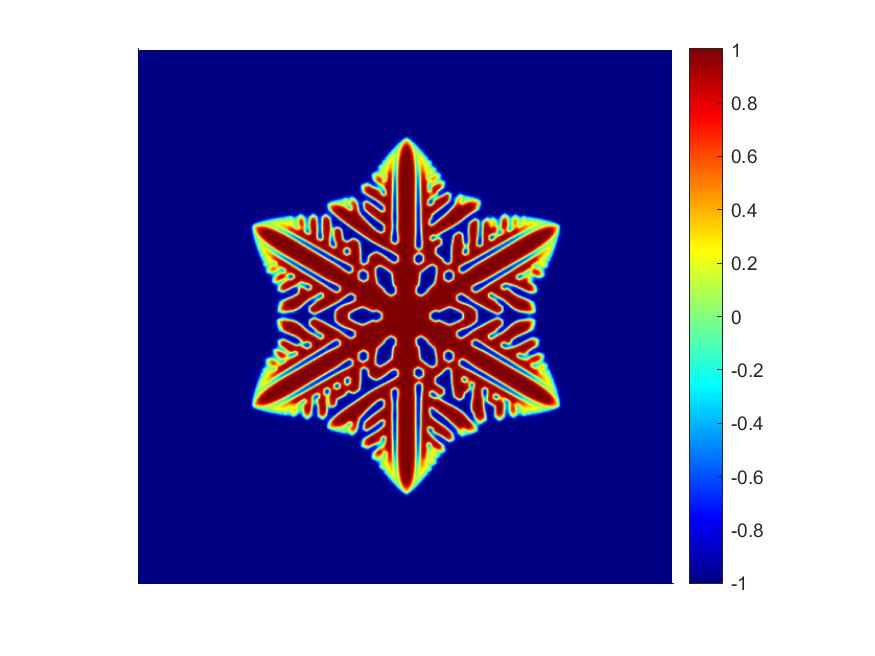}}
	\end{minipage}
	\begin{minipage}[t]{0.24\linewidth}
		\centerline{\includegraphics[scale=0.28]{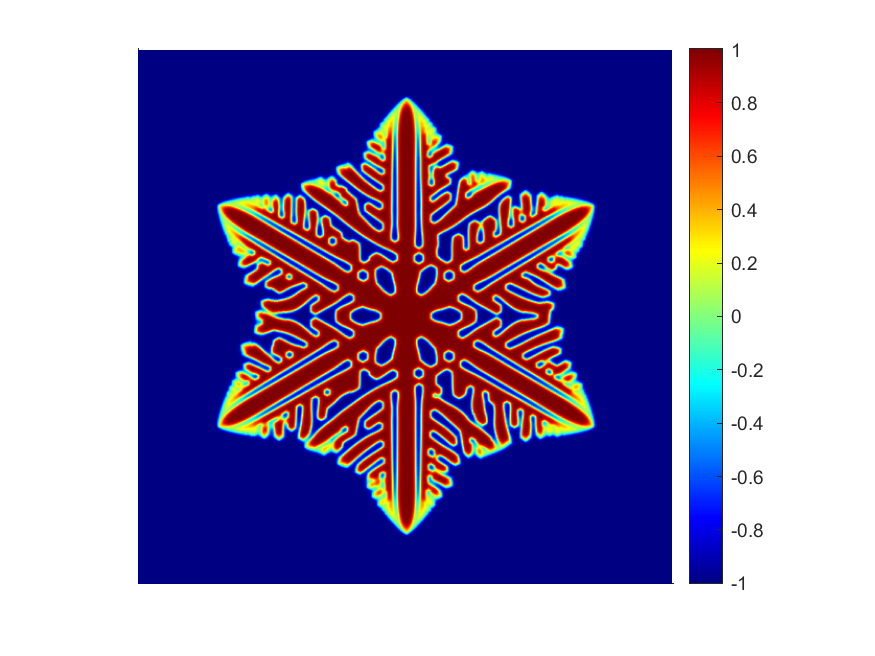}}
	\end{minipage}
	\begin{minipage}[t]{0.24\linewidth}
		\centerline{\includegraphics[scale=0.28]{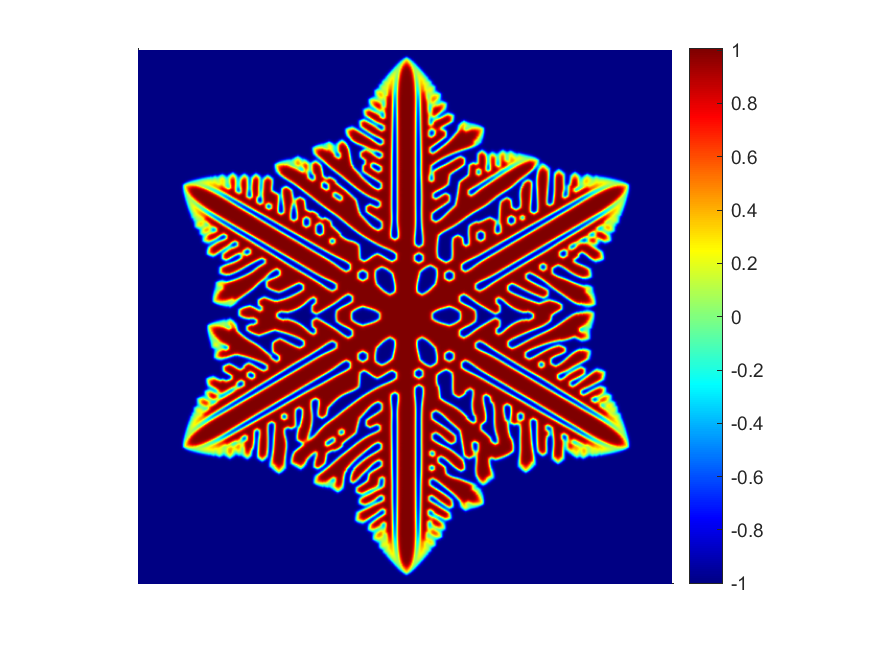}}
	\end{minipage}
	\centerline{(a) Phase-field evolution}
	\vskip 3mm
	\begin{minipage}[t]{0.24\linewidth}
		\centerline{\includegraphics[scale=0.28]{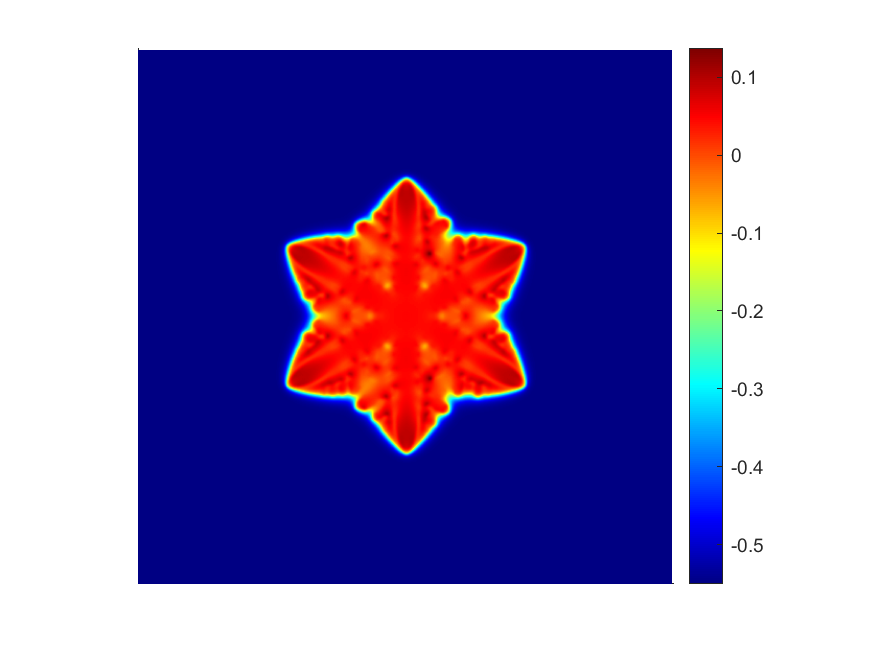}}
	\end{minipage}
	\begin{minipage}[t]{0.24\linewidth}
		\centerline{\includegraphics[scale=0.28]{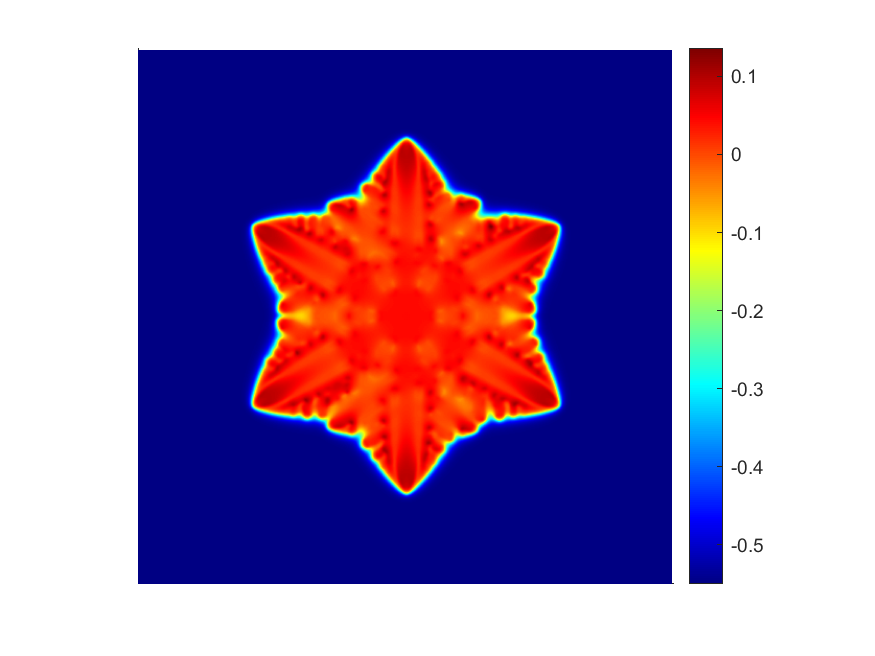}}
	\end{minipage}
	\begin{minipage}[t]{0.24\linewidth}
		\centerline{\includegraphics[scale=0.28]{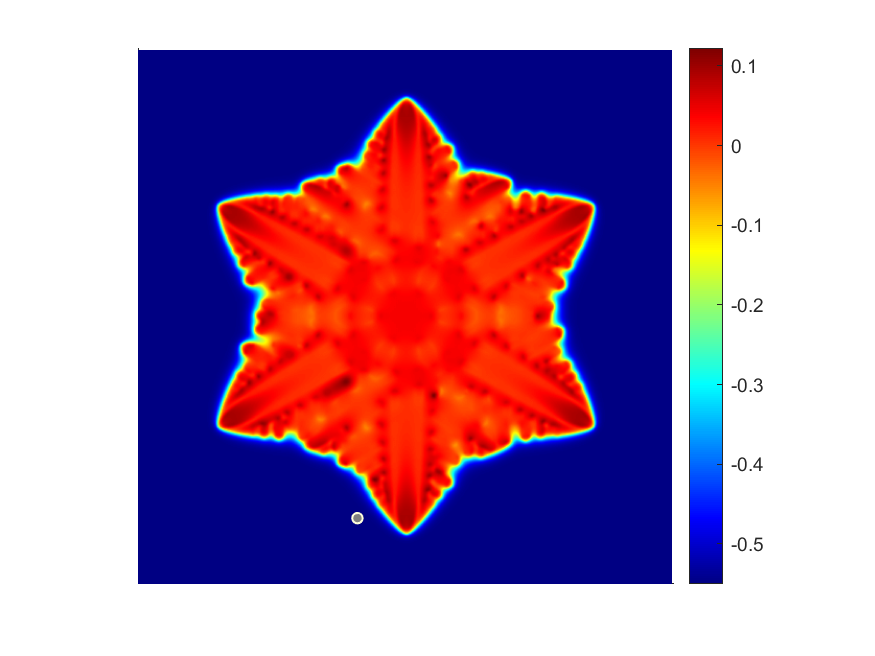}}
	\end{minipage}
	\begin{minipage}[t]{0.24\linewidth}
		\centerline{\includegraphics[scale=0.28]{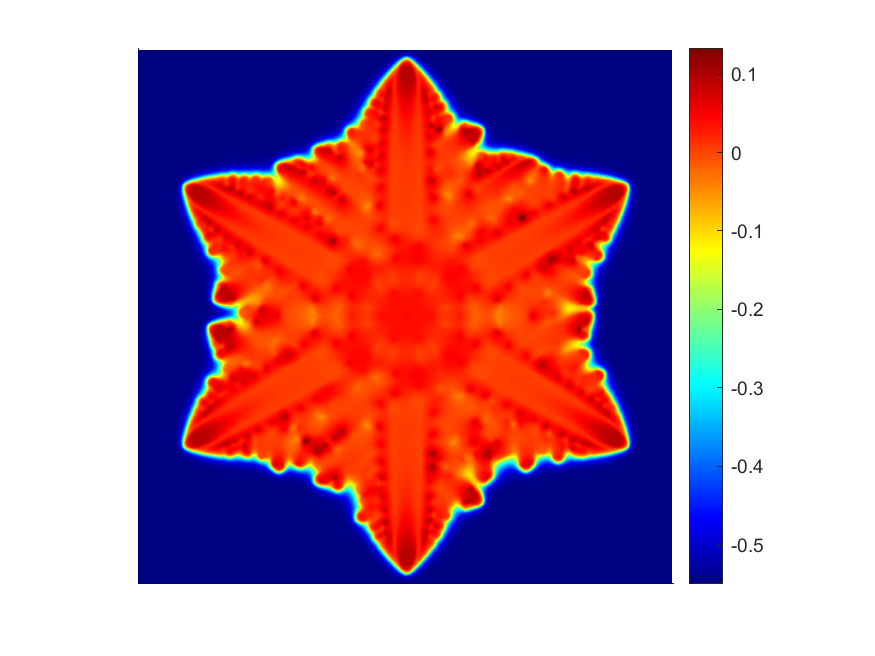}}
	\end{minipage}
	\centerline{(b) Temperature profiles}
	\caption{Example \ref{example4}: Evolution of phase-field variable $\phi$ and temperature $T$ in a 2D dendritic crystal growth simulation with sixfold anisotropy. Snapshots are taken at $t=15, 20, 25, 30$ with $K = 0.8$.
	}\label{fig_example4_K8}
\end{figure*}

\begin{figure*}[htbp]
	\begin{minipage}[t]{0.24\linewidth}
		\centerline{\includegraphics[scale=0.28]{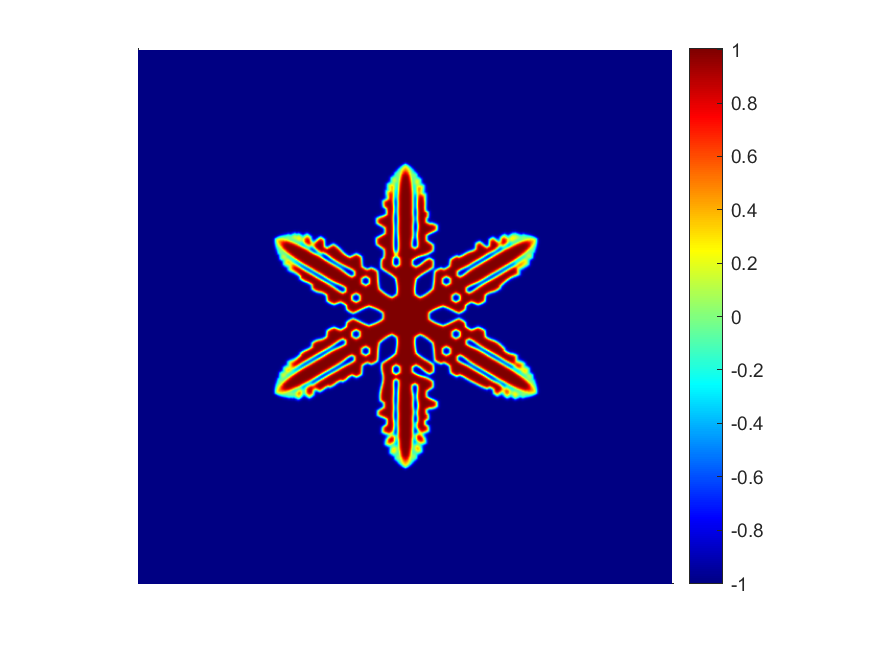}}
	\end{minipage}
	\begin{minipage}[t]{0.24\linewidth}
		\centerline{\includegraphics[scale=0.28]{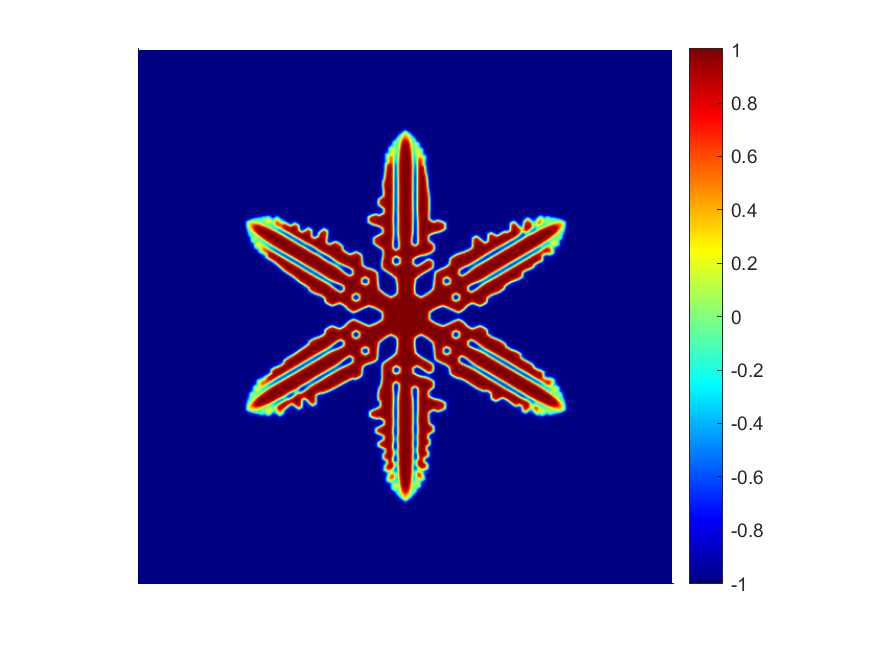}}
	\end{minipage}
	\begin{minipage}[t]{0.24\linewidth}
		\centerline{\includegraphics[scale=0.28]{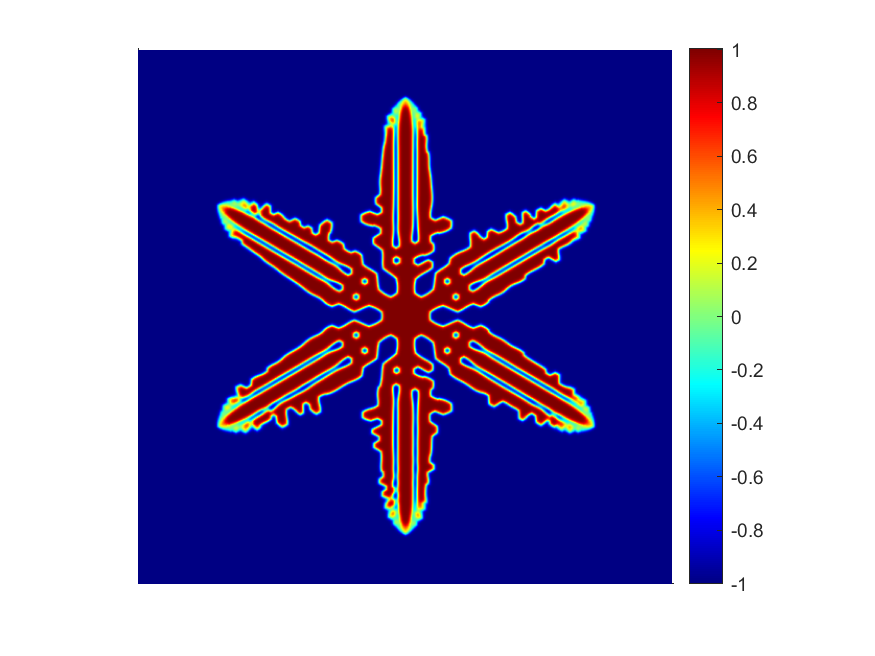}}
	\end{minipage}
	\begin{minipage}[t]{0.24\linewidth}
		\centerline{\includegraphics[scale=0.28]{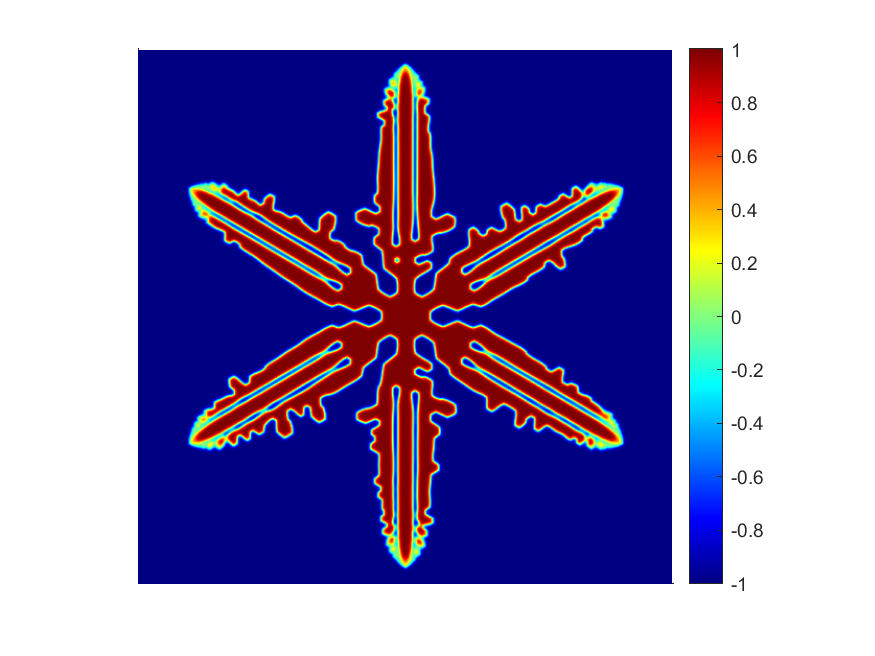}}
	\end{minipage}
	\centerline{(a) Phase-field evolution}
	\vskip 3mm
	\begin{minipage}[t]{0.24\linewidth}
		\centerline{\includegraphics[scale=0.28]{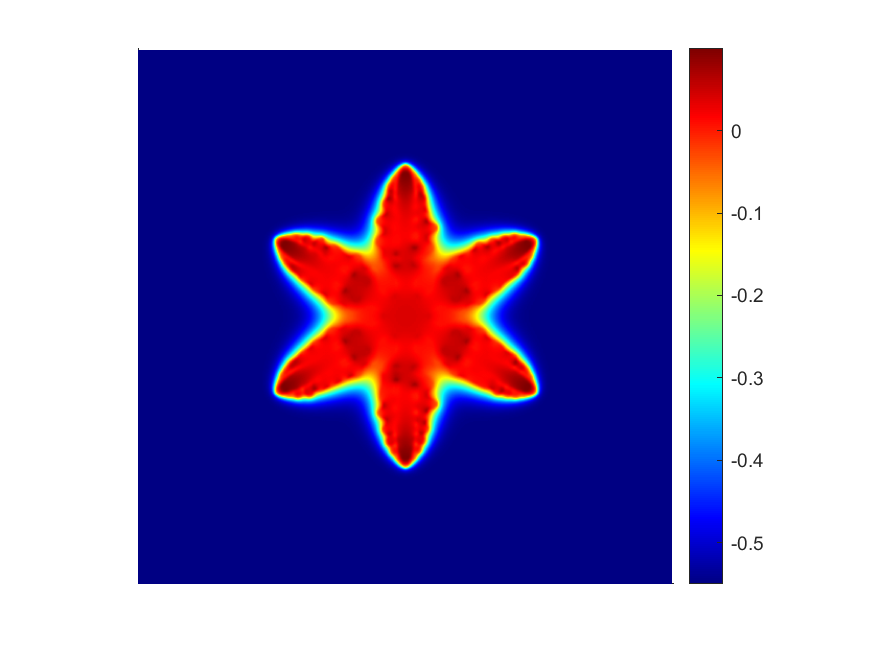}}
	\end{minipage}
	\begin{minipage}[t]{0.24\linewidth}
		\centerline{\includegraphics[scale=0.28]{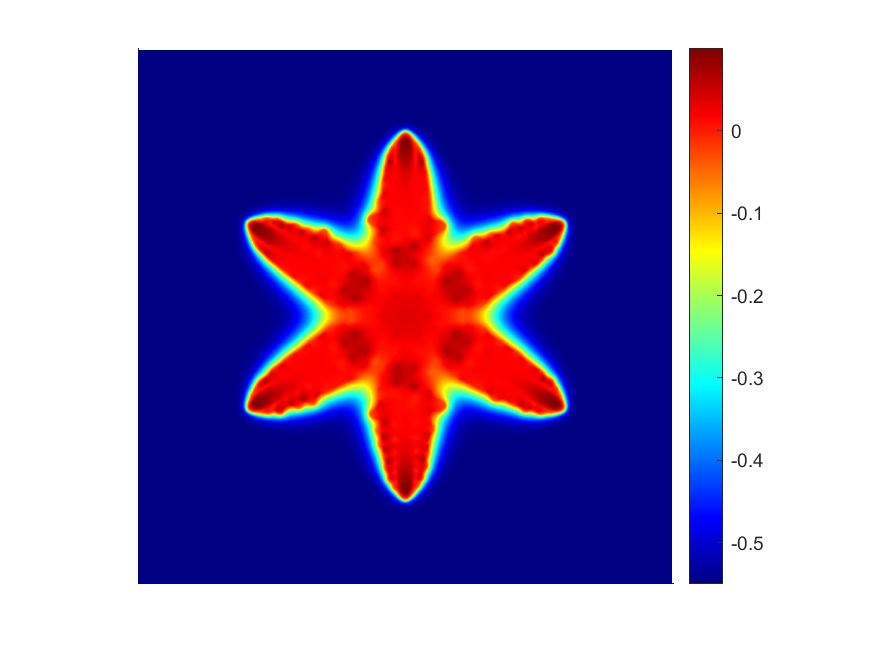}}
	\end{minipage}
	\begin{minipage}[t]{0.24\linewidth}
		\centerline{\includegraphics[scale=0.28]{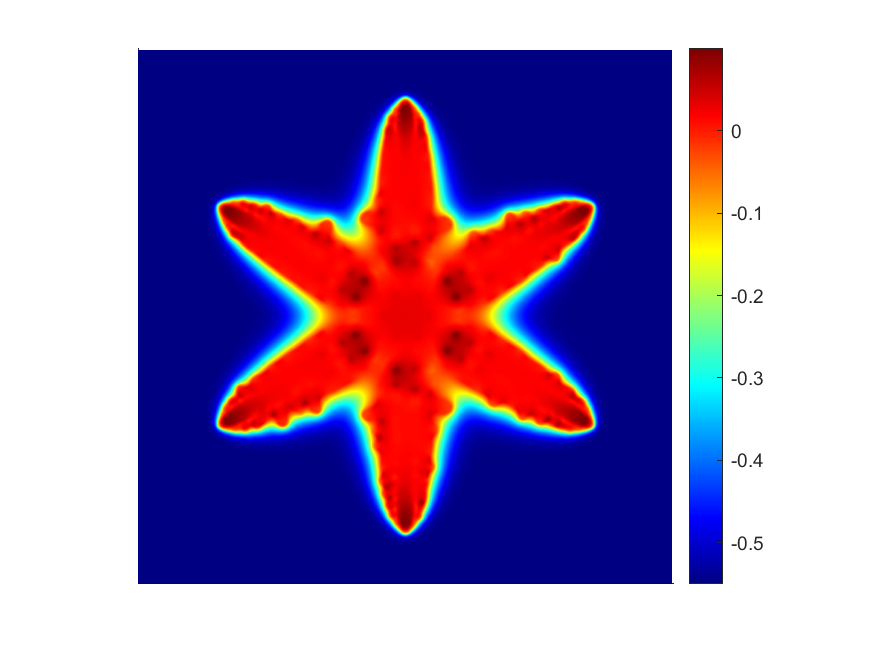}}
	\end{minipage}
	\begin{minipage}[t]{0.24\linewidth}
		\centerline{\includegraphics[scale=0.28]{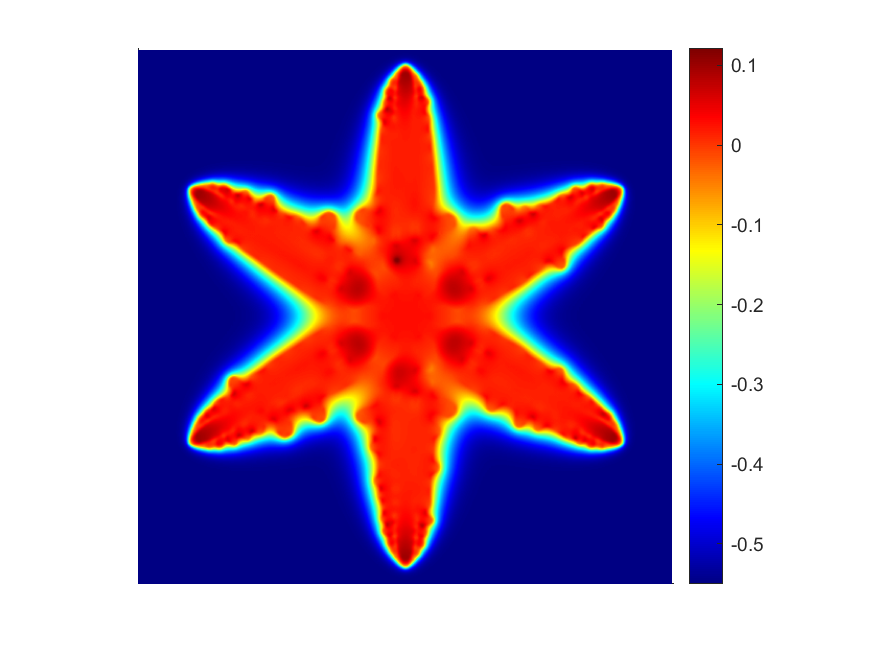}}
	\end{minipage}
	\centerline{(b) Temperature profiles}
	\caption{Example \ref{example4}: Evolution of phase-field variable $\phi$ and temperature $T$ in a 2D dendritic crystal growth simulation with sixfold anisotropy. Snapshots are taken at $t=20, 25, 30, 35$ with $K = 0.9$.
	}\label{fig_example4_K9}
\end{figure*}

\begin{figure*}[htbp]
	\begin{minipage}[t]{0.49\linewidth}
		\centerline{\includegraphics[scale=0.5]{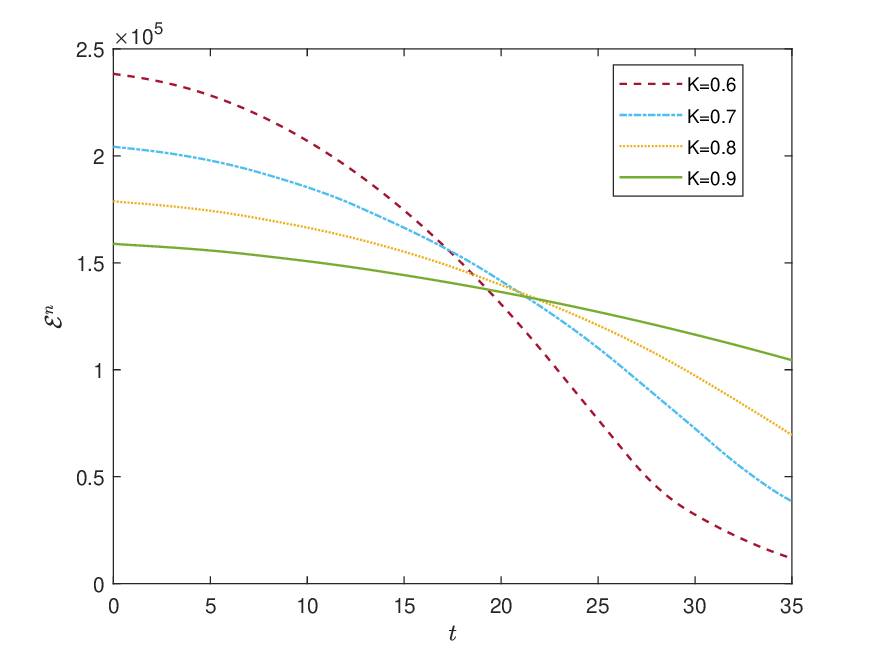}}
		\centerline{(a) Temporal behavior of $\mathcal{E}^{n}$}
	\end{minipage}
	\begin{minipage}[t]{0.49\linewidth}
		\centerline{\includegraphics[scale=0.5]{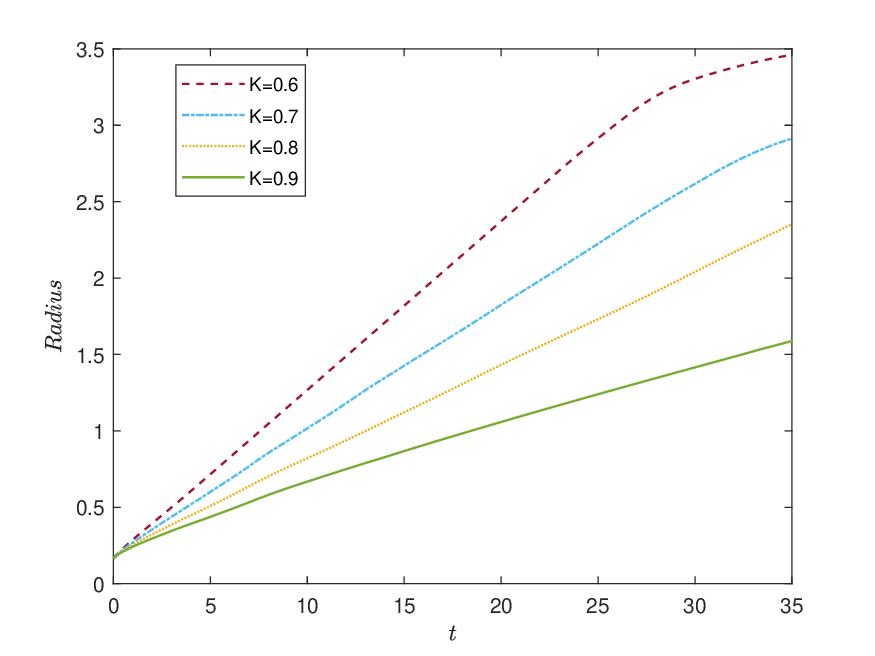}}
		\centerline{(b) Temporal evolution of the radius}
	\end{minipage}
	\caption{Example \ref{example4}: Temporal behavior of the modified energy and characteristic crystal radius under different latent heat parameter $K$.
	}\label{fig_example4}
\end{figure*}

\begin{figure*}[htbp]
	\begin{minipage}[t]{0.24\linewidth}
		\centerline{\includegraphics[scale=0.28]{phi20K8.eps}}
	\end{minipage}
	\begin{minipage}[t]{0.24\linewidth}
		\centerline{\includegraphics[scale=0.28]{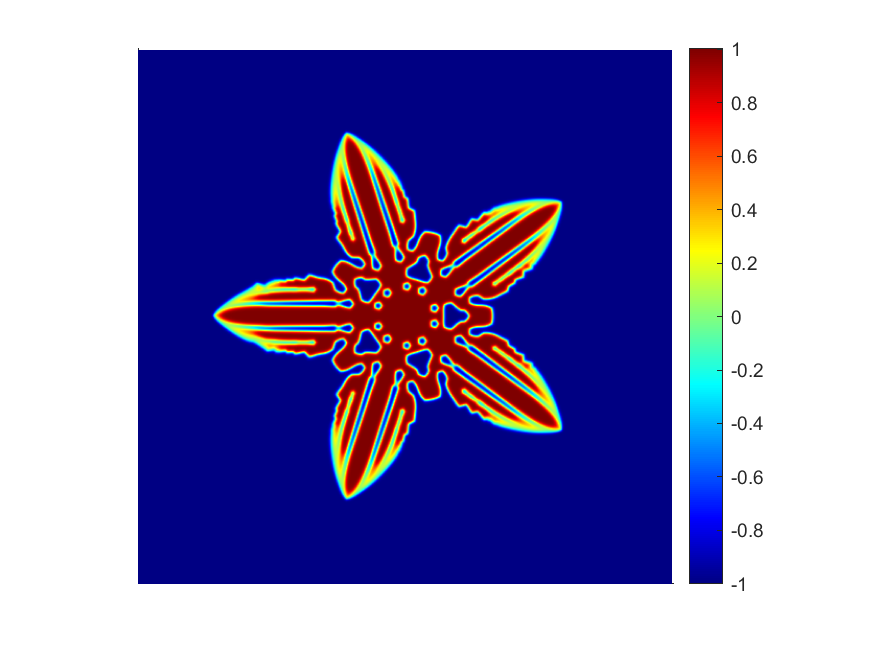}}
	\end{minipage}
	\begin{minipage}[t]{0.24\linewidth}
		\centerline{\includegraphics[scale=0.28]{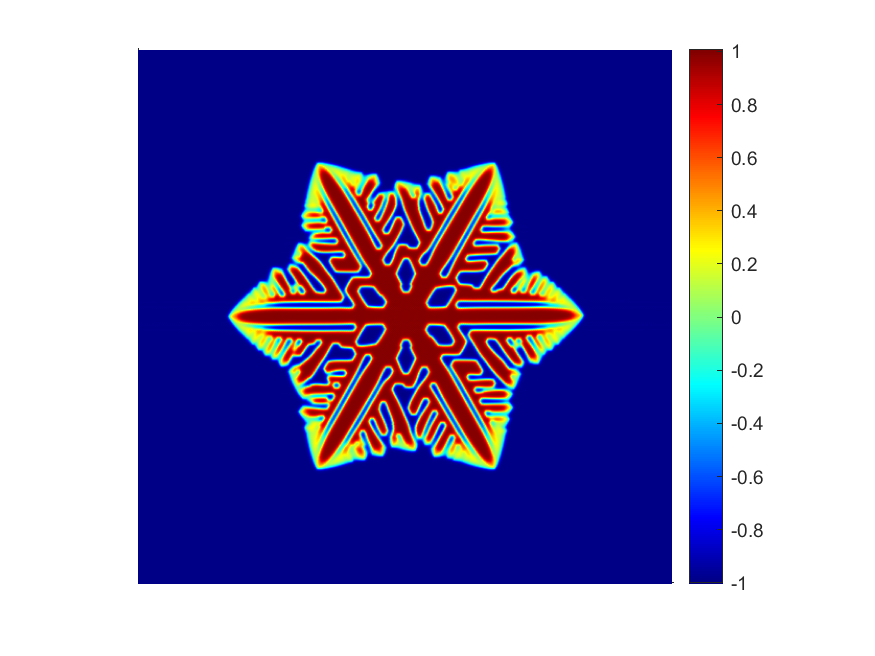}}
	\end{minipage}
	\begin{minipage}[t]{0.24\linewidth}
		\centerline{\includegraphics[scale=0.28]{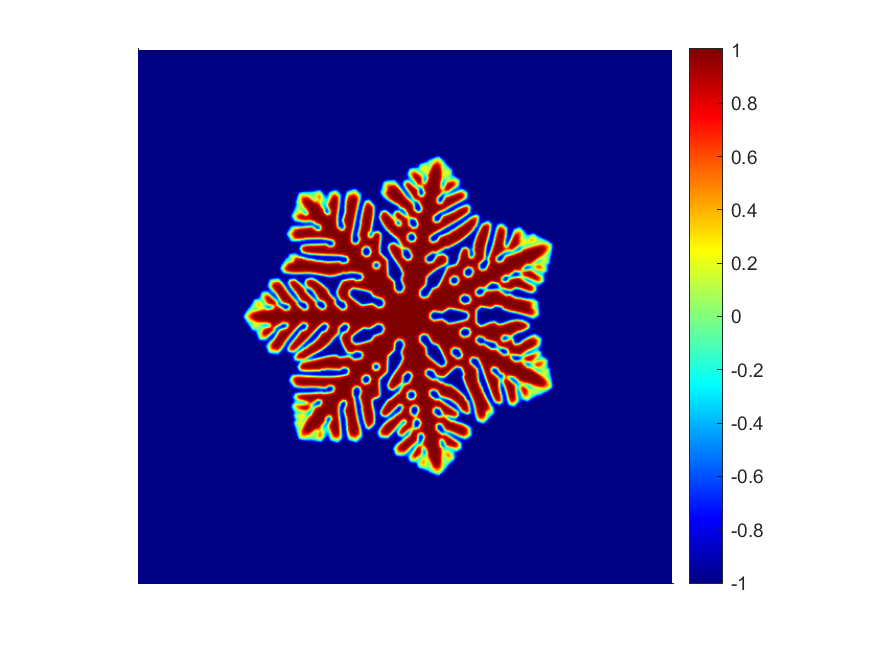}}
	\end{minipage}
	\centerline{(a) Phase-field evolution}
	\vskip 3mm
	\begin{minipage}[t]{0.24\linewidth}
		\centerline{\includegraphics[scale=0.28]{u20K8.eps}}
	\end{minipage}
	\begin{minipage}[t]{0.24\linewidth}
		\centerline{\includegraphics[scale=0.28]{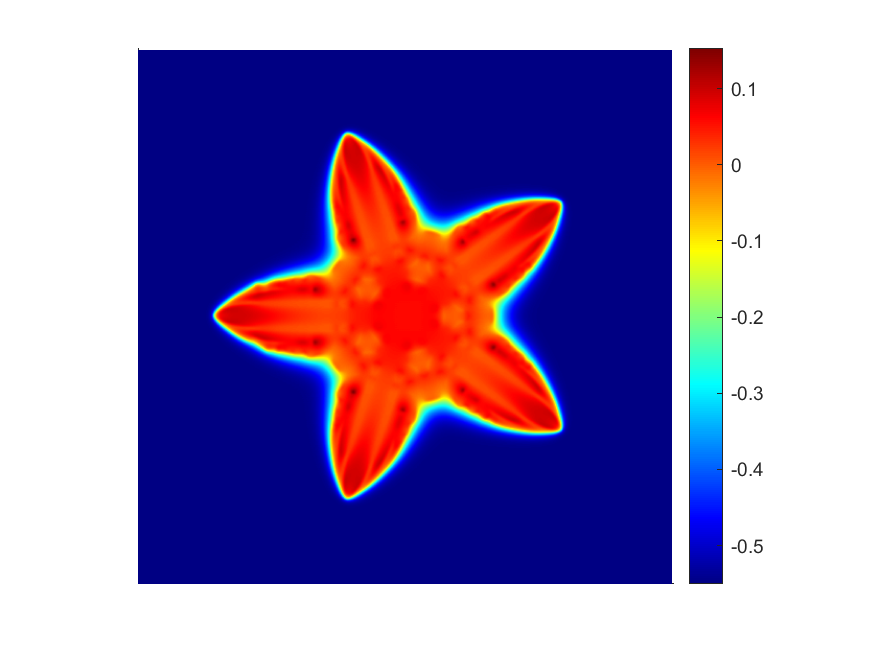}}
	\end{minipage}
	\begin{minipage}[t]{0.24\linewidth}
		\centerline{\includegraphics[scale=0.28]{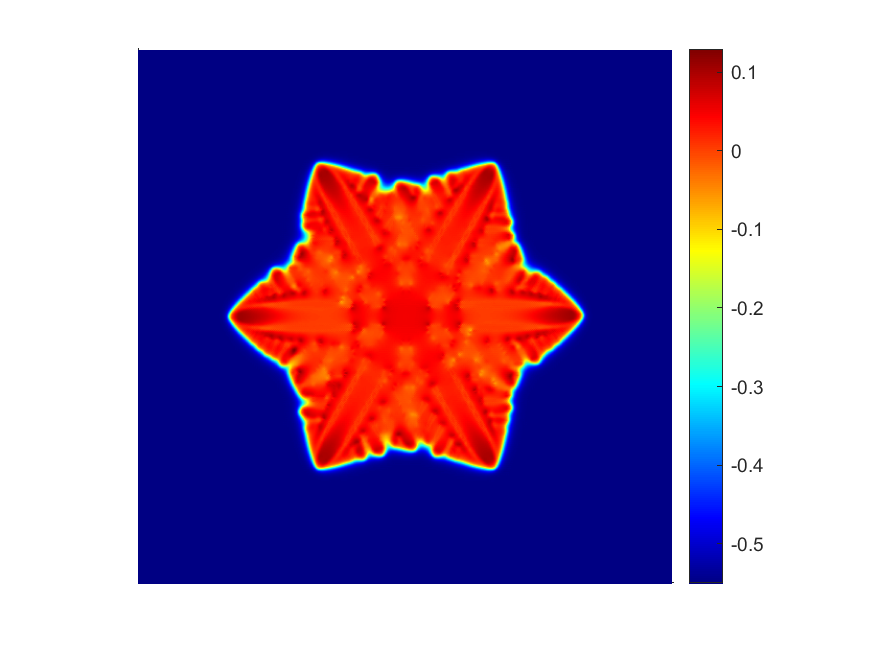}}
	\end{minipage}
	\begin{minipage}[t]{0.24\linewidth}
		\centerline{\includegraphics[scale=0.28]{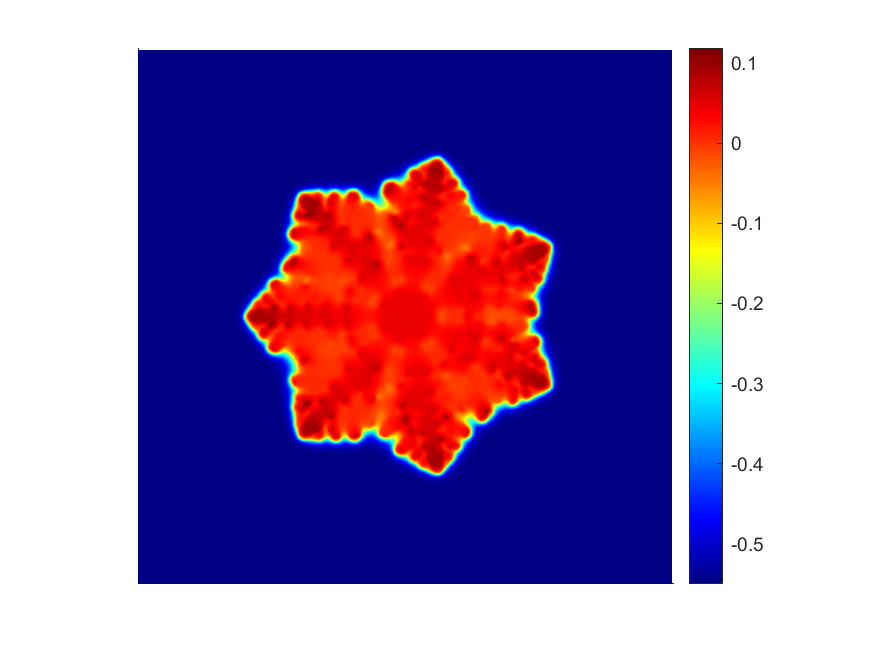}}
	\end{minipage}
	\centerline{(b) Temperature profiles}
	\caption{Example \ref{example4}: Dendritic crystal morphologies at $t=20$ obtained with different anisotropy orders $m$. The panels from left to right correspond to $m=4,5,6,7$, respectively.
	}\label{fig_example4_m}
\end{figure*}

\begin{example}\label{example5}
\upshape
In this example, we consider the heterogeneous growth of three crystal nuclei under sixfold anisotropy. The initial conditions are given by
\begin{equation*}\label{ex5_initial}
\phi^0(\x)=\dps\sum_{i=1}^{3}\tanh\left(\dfrac{r_0-|\x-\x_i|^2}{\epsilon_0}\right)+2,\quad T^0(\x)=
\begin{cases}
	0,\qquad \quad \phi^0(\x)>0,\\
	-0.55, \quad \mbox{otherwise},
\end{cases}
\end{equation*}
where $\x_1=(\frac{3}{4}\pi,\frac{3}{4}\pi)$, $\x_2=(\frac{4}{5}\pi,\frac{13}{10}\pi)$, $\x_3=(\frac{13}{10}\pi,\pi)$, with $r_0=2.7e-2,\ \epsilon_0=5.4e-3$. The parameters are chosen as $K=0.7,\ \epsilon_{4}=0.04$, and $\theta_0=90^{\circ}$, while the remaining parameters are taken from \eqref{ex3_params}.

\end{example}

In this example, we simulate the heterogeneous growth of three crystal nuclei under sixfold anisotropy. Figure~\ref{fig_example5} presents snapshots of the phase variable $\phi$ and temperature field $T$. Starting from three separated nuclei, the crystals gradually develop into dendritic structures with multiple side branches. As the growth proceeds, the dendrites interact with one another and form increasingly complex patterns. The corresponding temperature distributions are shown in Figure~\ref{fig_example5}(b). The numerical results are consistent with those reported in \cite{ohno2020quantitative,yang2020efficient,guo2024efficient}.

\begin{figure*}[htbp]
	\begin{minipage}[t]{0.24\linewidth}
		\centerline{\includegraphics[scale=0.28]{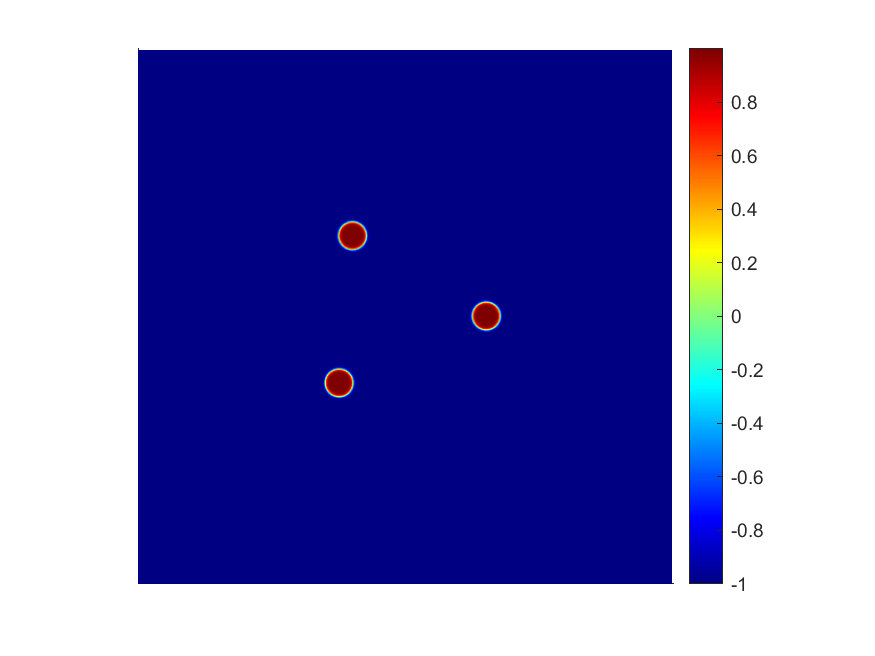}}
	\end{minipage}
	\begin{minipage}[t]{0.24\linewidth}
		\centerline{\includegraphics[scale=0.28]{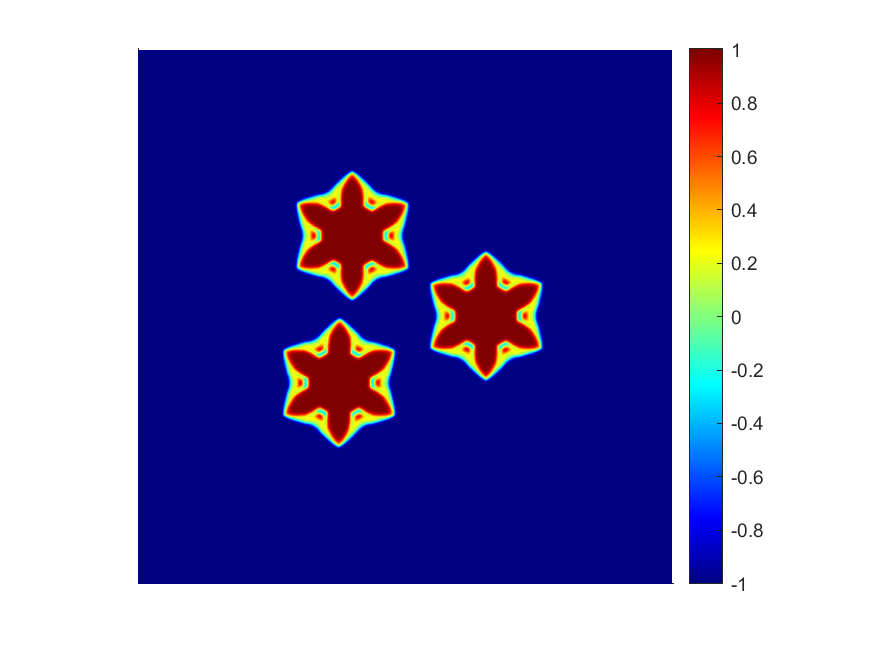}}
	\end{minipage}
	\begin{minipage}[t]{0.24\linewidth}
		\centerline{\includegraphics[scale=0.28]{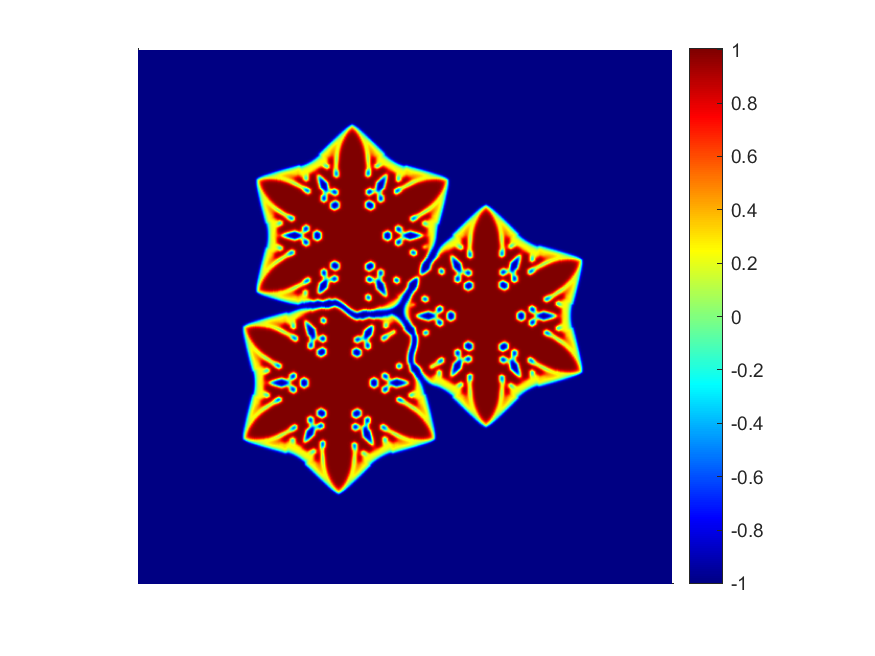}}
	\end{minipage}
	\begin{minipage}[t]{0.24\linewidth}
		\centerline{\includegraphics[scale=0.28]{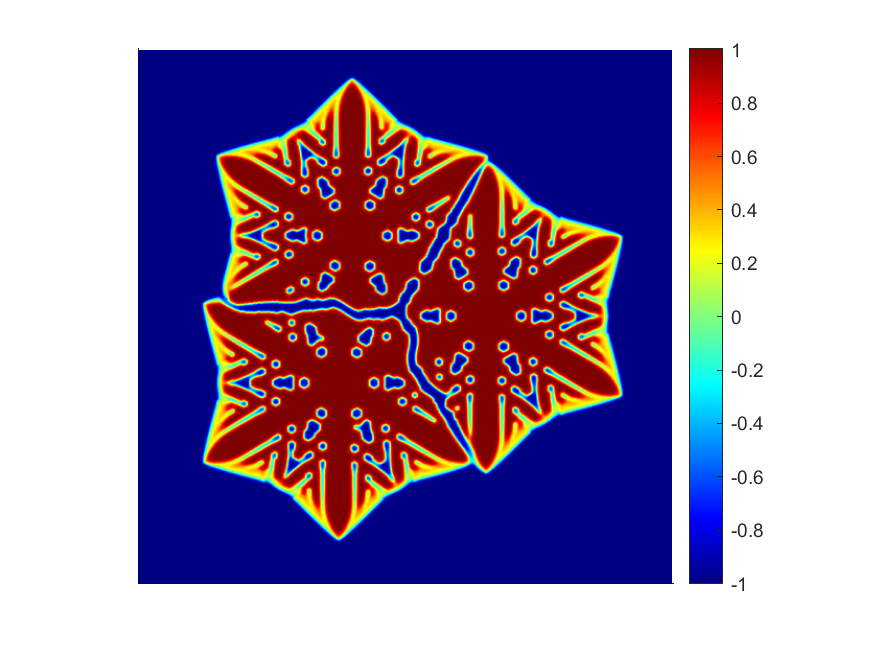}}
	\end{minipage}
	\centerline{(a) Phase-field evolution}
	\vskip 3mm
	\begin{minipage}[t]{0.24\linewidth}
		\centerline{\includegraphics[scale=0.28]{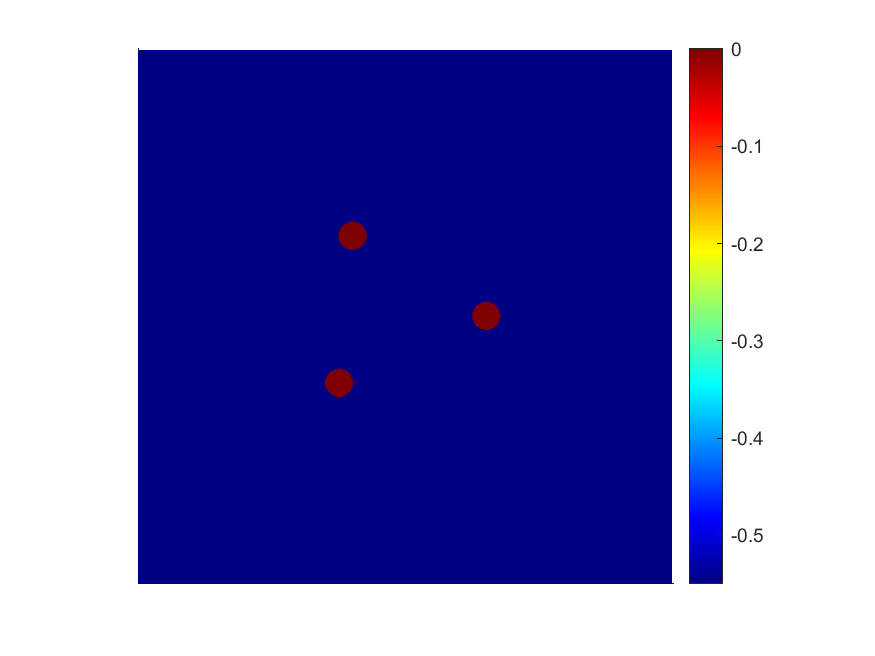}}
	\end{minipage}
	\begin{minipage}[t]{0.24\linewidth}
		\centerline{\includegraphics[scale=0.28]{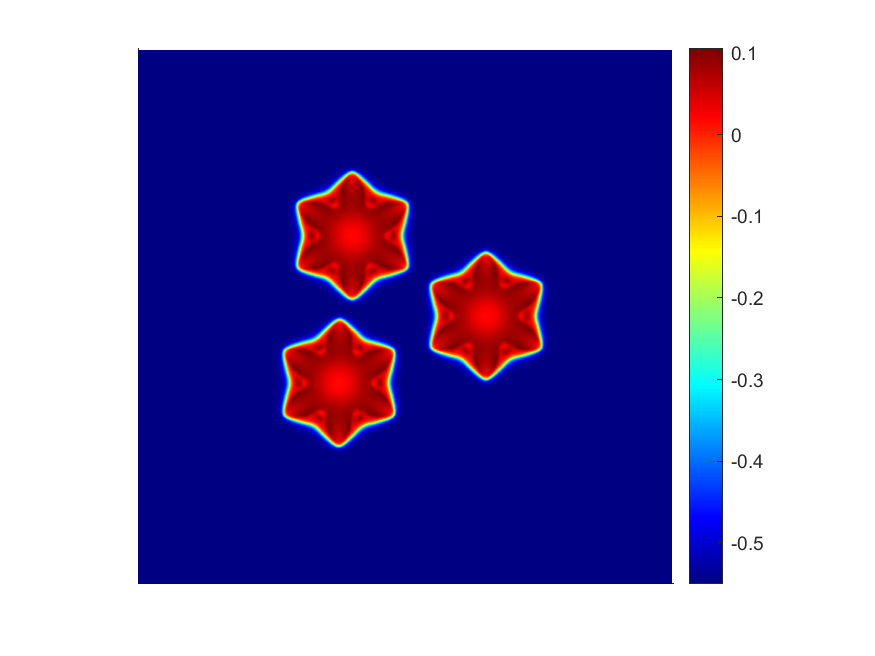}}
	\end{minipage}
	\begin{minipage}[t]{0.24\linewidth}
		\centerline{\includegraphics[scale=0.28]{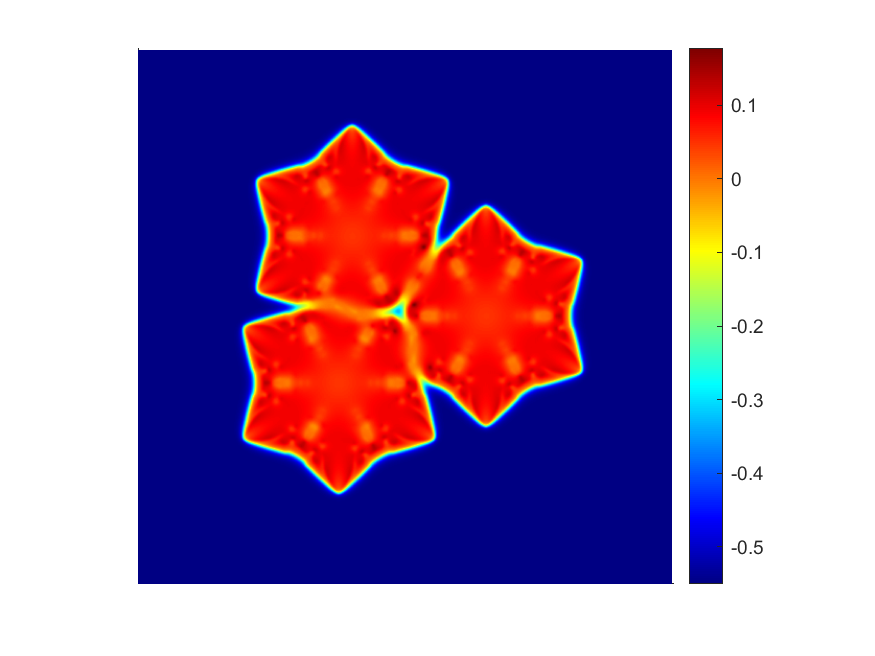}}
	\end{minipage}
	\begin{minipage}[t]{0.24\linewidth}
		\centerline{\includegraphics[scale=0.28]{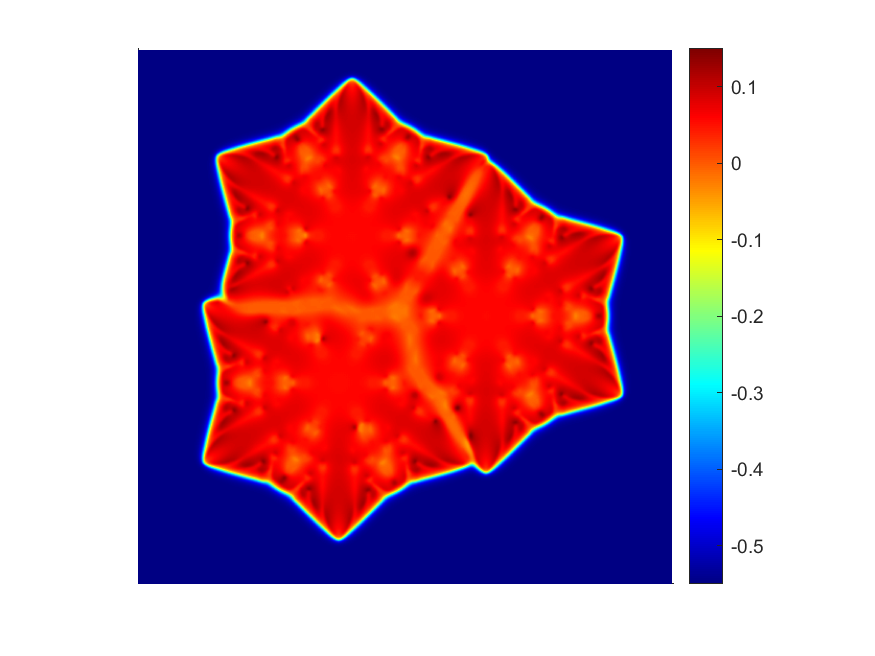}}
	\end{minipage}
	\centerline{(b) Temperature profiles}
	\caption{Example \ref{example5}: Numerical simulation of sixfold-anisotropic dendritic growth with three nuclei. Snapshots are taken at $t=0, 5, 10, 15$ with $K = 0.7$.
	}\label{fig_example5}
\end{figure*}

\subsection{3D dendrite crystal growth}\label{sec44}
Finally, we proceed with the 3D simulations, considering a computational domain of $\Omega=(0,2\pi)^3$. In this subsection, we employ the 4-stage Gauss time-stepping method with a time step of $\dt=0.01$ and $128 \times 128 \times 128$ Fourier modes for the spatial discretization.
\begin{example}\label{example6}
\upshape
In this example, we consider 3D dendritic crystal growth with fourfold anisotropy and adopt the following initial conditions:
\begin{equation*}\label{ex6_initial}
\phi^0(\x)=\tanh\left(\dfrac{r_0-|\x-\x_0|^2}{\epsilon_0}\right),\quad 
T^0(\x)=
\begin{cases}
	0,\qquad \quad \phi^0(\x)>0,\\
	-0.55, \quad \mbox{otherwise},
\end{cases}
\end{equation*}
where $\x_0=(\pi,\pi,\pi),\ r_0=2.7e-2,\ \epsilon_0=5.4e-3$. The other parameters are chosen as
\begin{equation*}\label{ex6_params}
\begin{aligned}
	&C_1=0.6,\ C_2=10,\ C_0=5e5,\ \tau=1e3,\ \varepsilon=1.5e-2,\\
	&\epsilon_4=0.1,\ \lambda=4e2,\ D=2.5e-3,\ \theta_0=0^{\circ}.
\end{aligned}
\end{equation*}
	
\end{example}	

In Figure~\ref{fig_example6_K}, we present the isosurfaces of $\phi=0$ for different values of $K$. Starting from a small spherical nucleus, the crystal gradually develops into a pyramid-like dendritic structure with six dominant growth directions. 
To further examine the crystal morphology, Figure~\ref{fig_example6_2d} displays the 2D cross-sections $\phi(\cdot,\cdot,\pi)$ and $T(\cdot,\cdot,\pi)$.
These cross-sectional profiles provide a clearer view of the influence of $K$, showing that larger values of $K$ lead to thinner branches and sharper tips.
Figure~\ref{fig_example6} shows the evolution of the modified energy and the crystal size. The crystal size is quantified by the characteristic radius of an equivalent sphere whose volume is given by $\int_{\Omega} \frac{1+\phi}{2} d \boldsymbol{x}$.
For all tested values of $K$, the modified energy decreases monotonically, while the crystal size increases continuously with time. The observed behavior is consistent with the 2D fourfold anisotropy case in Example~\ref{example3}.

\begin{figure*}[htbp]
	\begin{minipage}[t]{0.24\linewidth}
		\centerline{\includegraphics[scale=0.28]{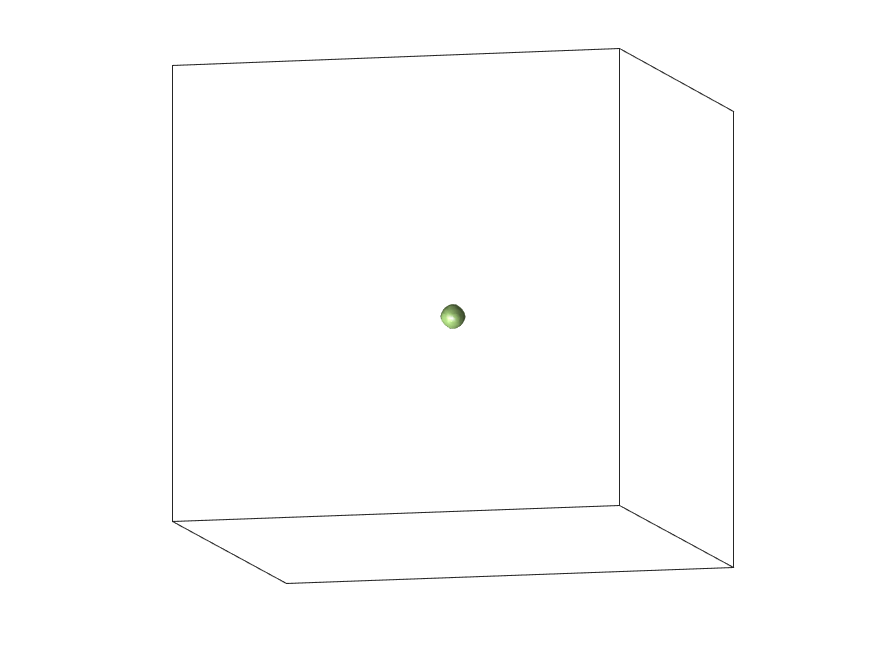}}
	\end{minipage}
	\begin{minipage}[t]{0.24\linewidth}
		\centerline{\includegraphics[scale=0.28]{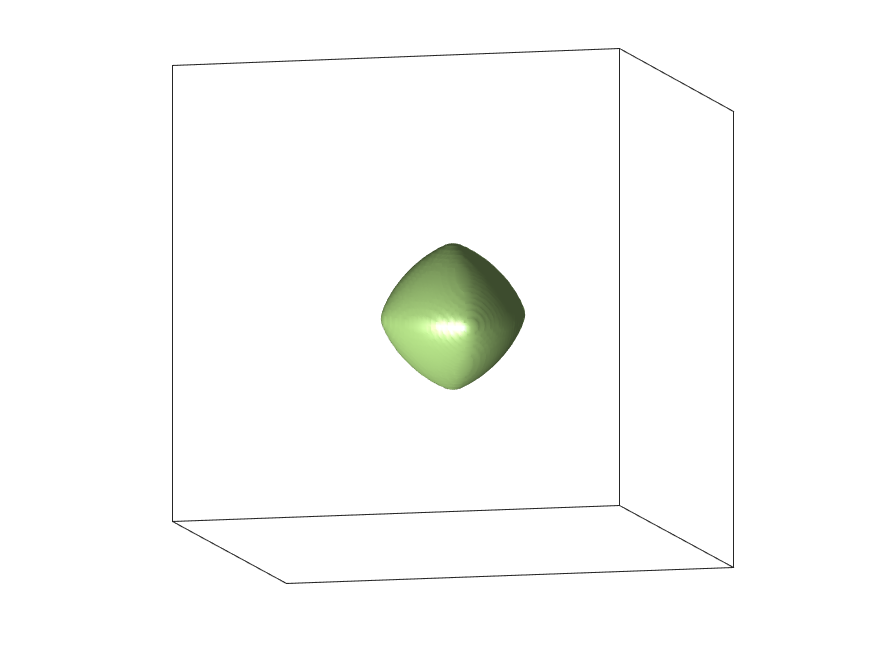}}
	\end{minipage}
	\begin{minipage}[t]{0.24\linewidth}
		\centerline{\includegraphics[scale=0.28]{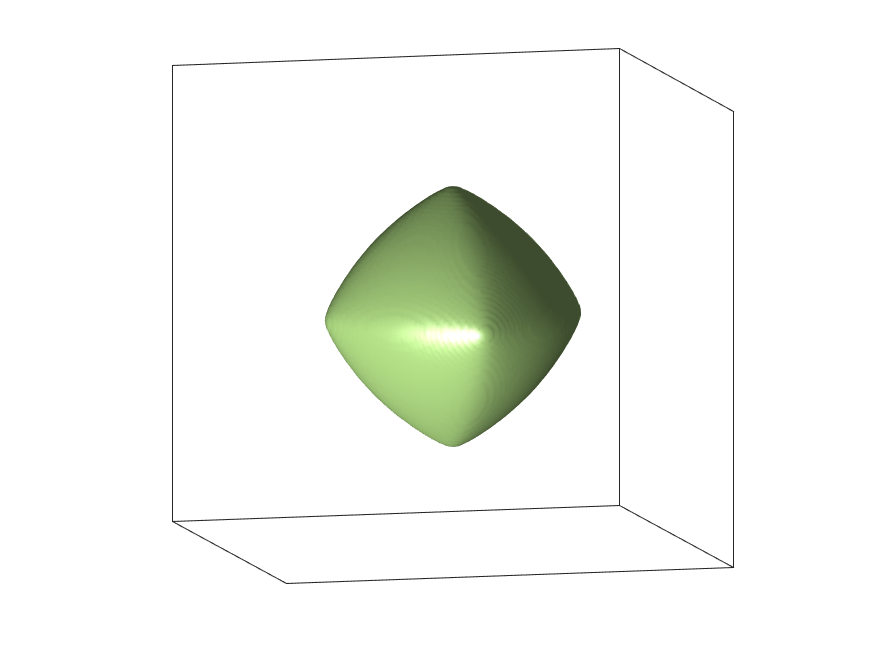}}
	\end{minipage}
	\begin{minipage}[t]{0.24\linewidth}
		\centerline{\includegraphics[scale=0.28]{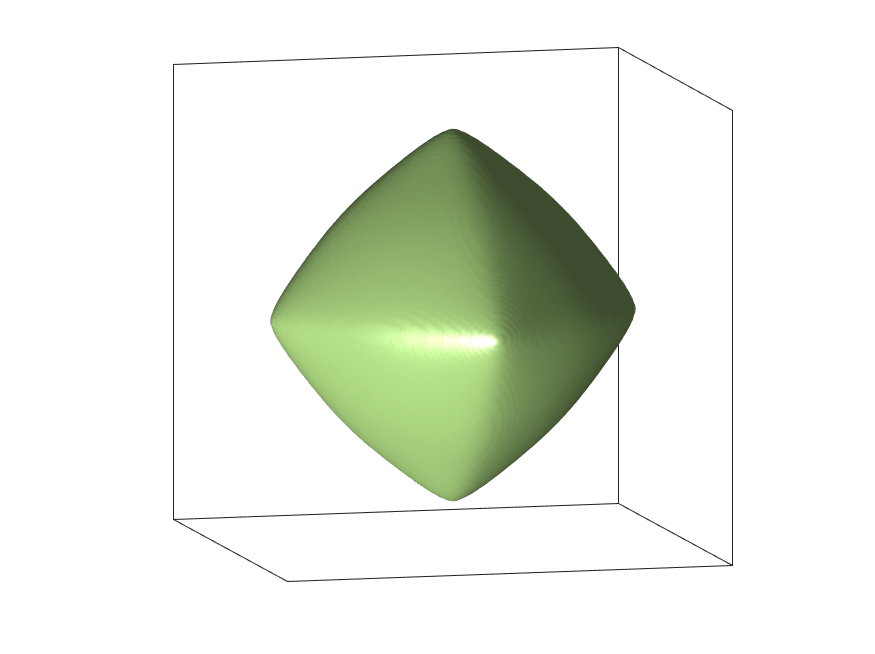}}
	\end{minipage}
	\centerline{(a) Crystal isosurface $\phi = 0$ at $t=0, 5, 10, 15$}
	\vskip 3mm
	\begin{minipage}[t]{0.24\linewidth}
		\centerline{\includegraphics[scale=0.28]{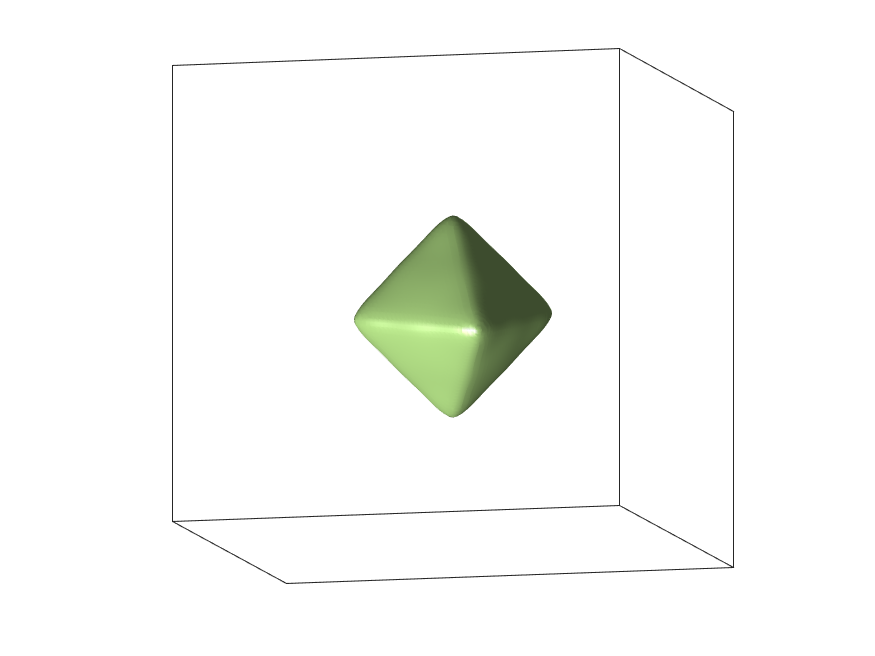}}
	\end{minipage}
	\begin{minipage}[t]{0.24\linewidth}
		\centerline{\includegraphics[scale=0.28]{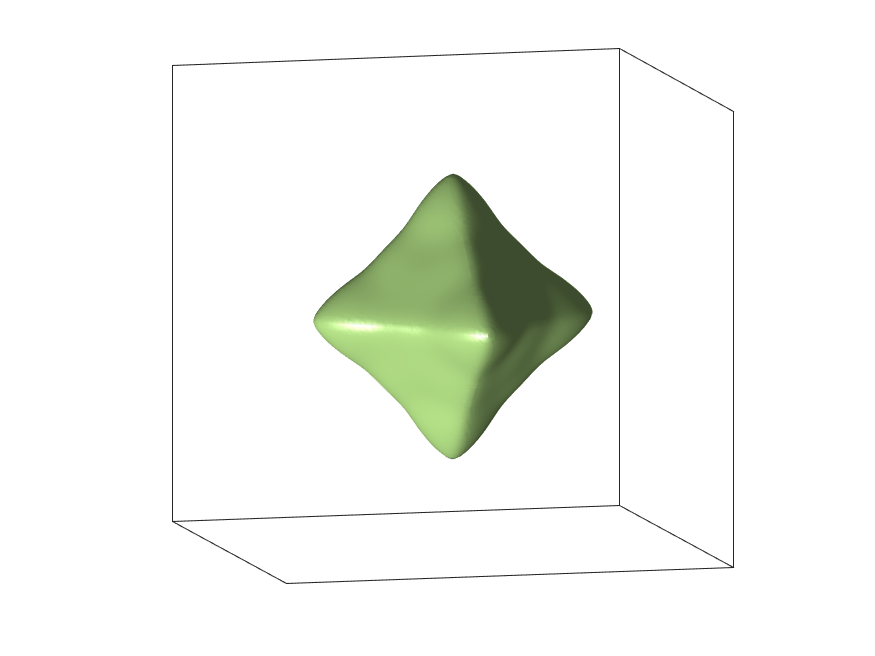}}
	\end{minipage}
	\begin{minipage}[t]{0.24\linewidth}
		\centerline{\includegraphics[scale=0.28]{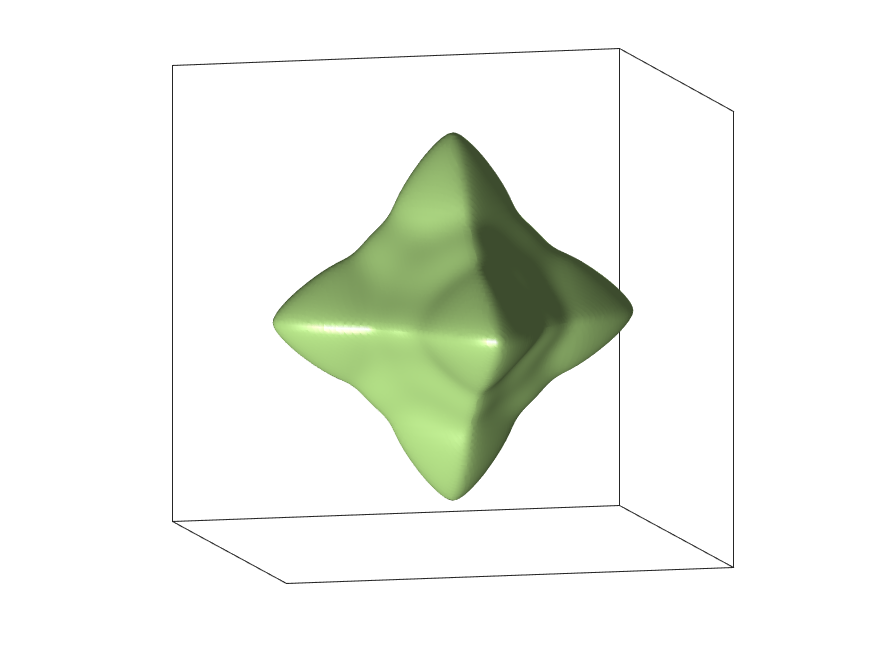}}
	\end{minipage}
	\begin{minipage}[t]{0.24\linewidth}
		\centerline{\includegraphics[scale=0.28]{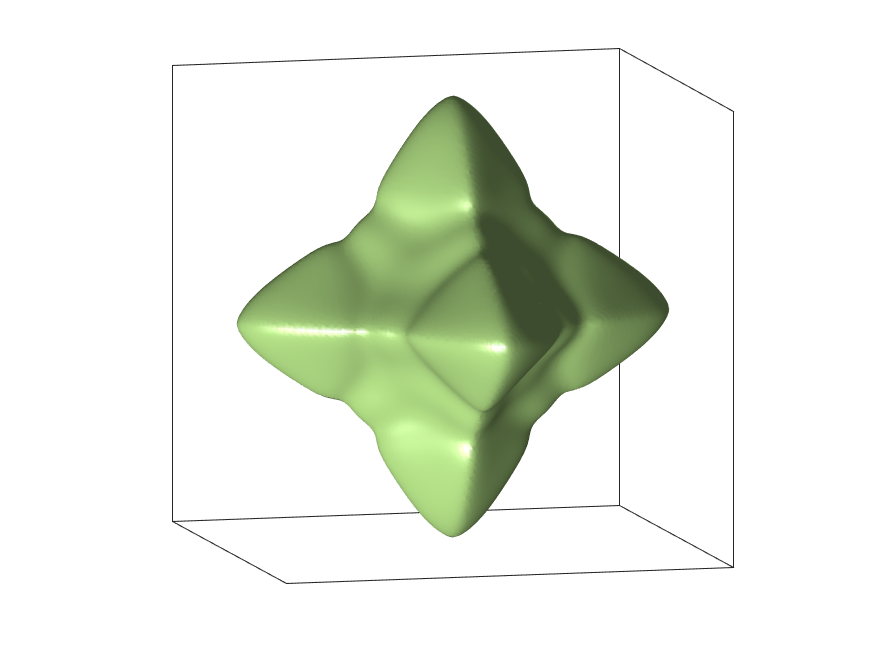}}
	\end{minipage}
	\centerline{(b) Crystal isosurface $\phi = 0$ at $t=10, 15, 20, 25$}
	\vskip 3mm
	\begin{minipage}[t]{0.24\linewidth}
		\centerline{\includegraphics[scale=0.28]{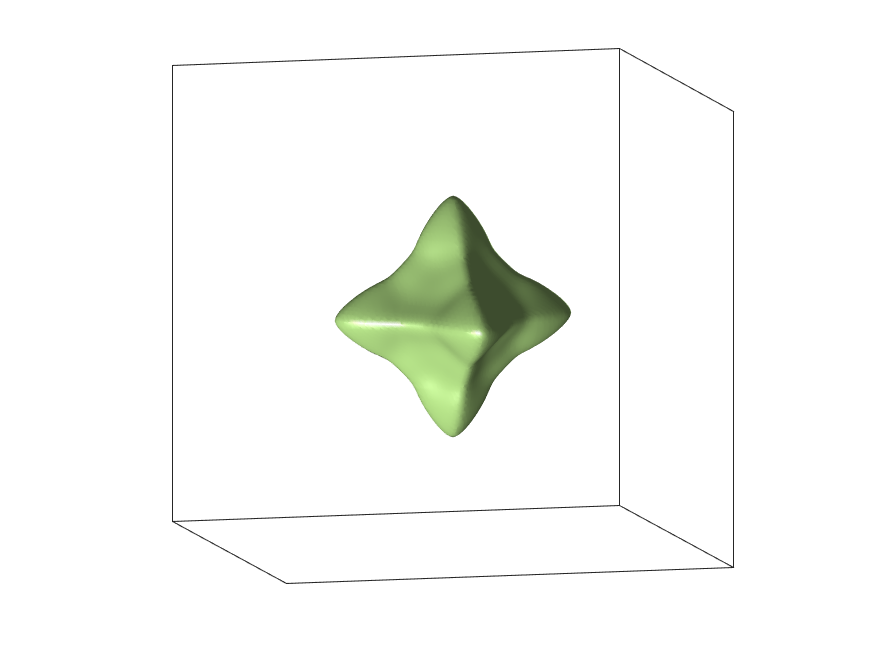}}
	\end{minipage}
	\begin{minipage}[t]{0.24\linewidth}
		\centerline{\includegraphics[scale=0.28]{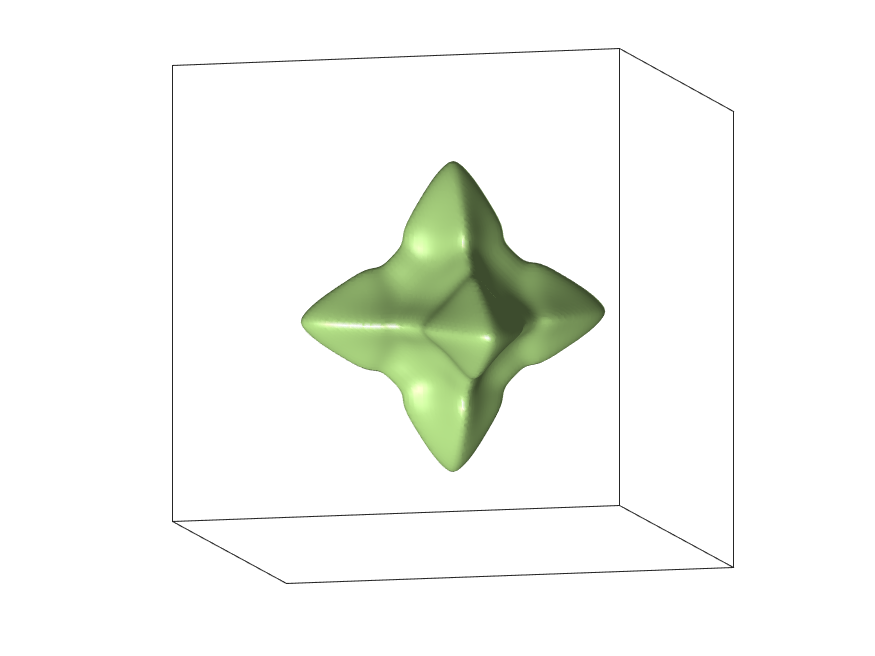}}
	\end{minipage}
	\begin{minipage}[t]{0.24\linewidth}
		\centerline{\includegraphics[scale=0.28]{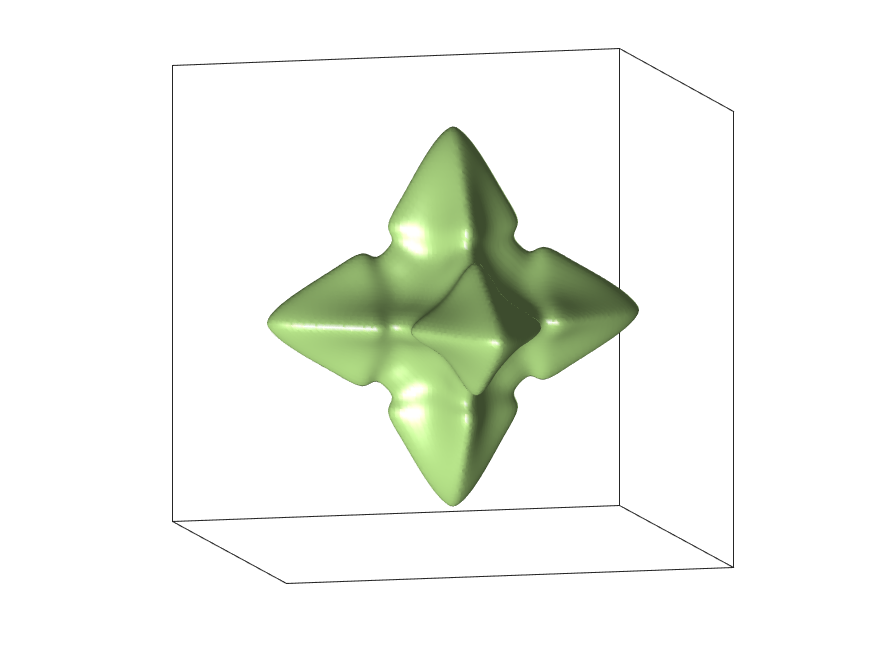}}
	\end{minipage}
	\begin{minipage}[t]{0.24\linewidth}
		\centerline{\includegraphics[scale=0.28]{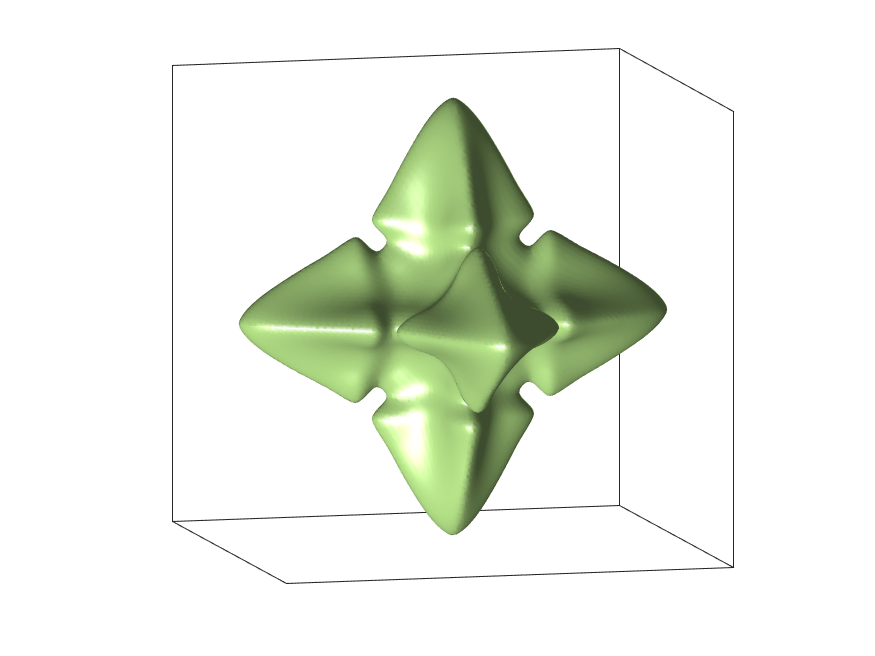}}
	\end{minipage}
	\centerline{(c) Crystal isosurface $\phi = 0$ at $t=15, 20, 25, 30$}
	\vskip 3mm
	\begin{minipage}[t]{0.24\linewidth}
		\centerline{\includegraphics[scale=0.28]{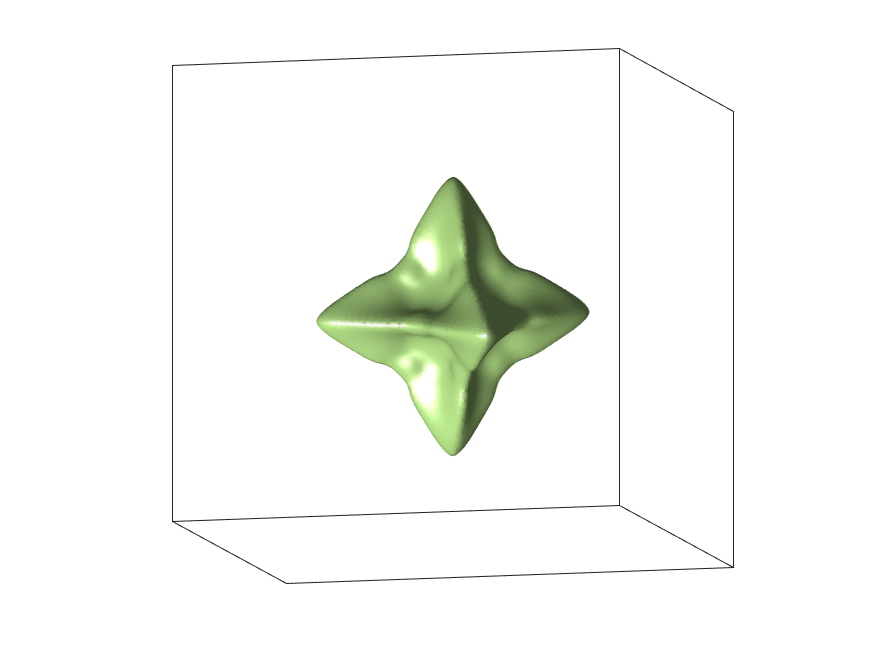}}
	\end{minipage}
	\begin{minipage}[t]{0.24\linewidth}
		\centerline{\includegraphics[scale=0.28]{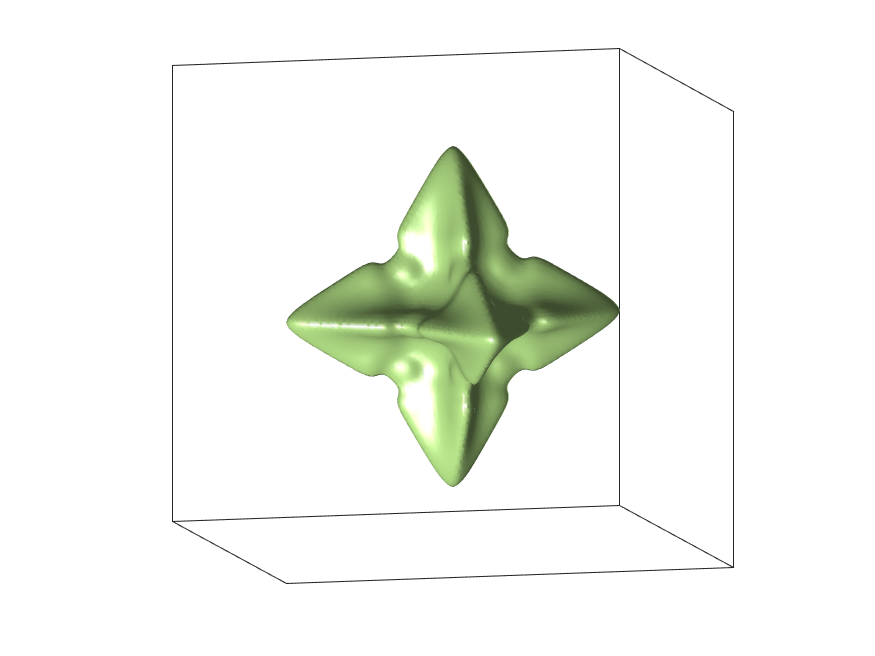}}
	\end{minipage}
	\begin{minipage}[t]{0.24\linewidth}
		\centerline{\includegraphics[scale=0.28]{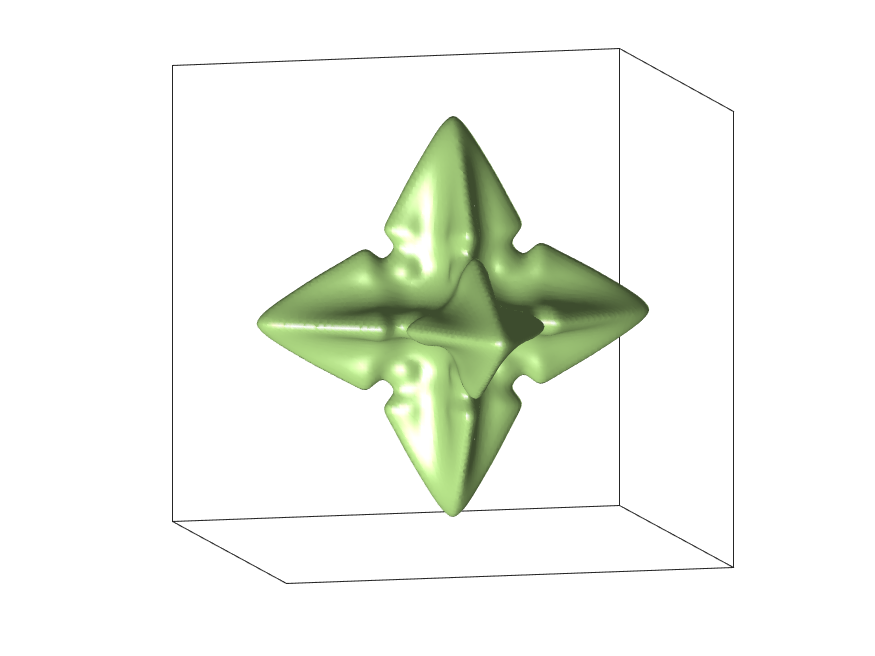}}
	\end{minipage}
	\begin{minipage}[t]{0.24\linewidth}
		\centerline{\includegraphics[scale=0.28]{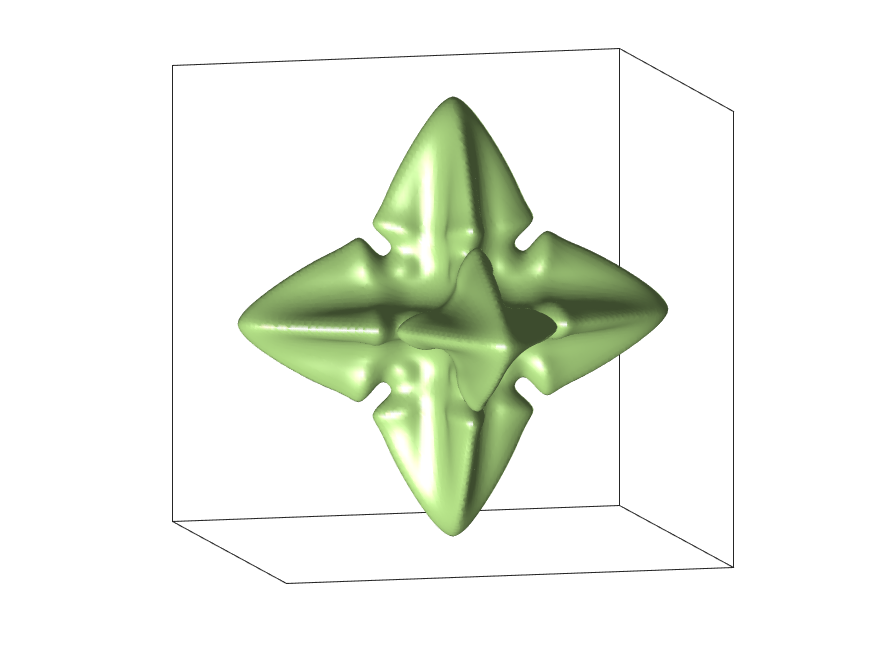}}
	\end{minipage}
	\centerline{(d) Crystal isosurface $\phi = 0$ at $t=20, 25, 30, 35$}
	\caption{Example \ref{example6}: Isosurface visualization of 3D fourfold-anisotropic dendritic growth dynamics for different latent heat parameter $K$. The panels, from top to bottom, correspond to $K=0.5,1.0,1.5,2.0$, respectively.
	}\label{fig_example6_K}
\end{figure*}

\begin{figure*}[htbp]
	\begin{minipage}[t]{0.24\linewidth}
			\centerline{\includegraphics[scale=0.28]{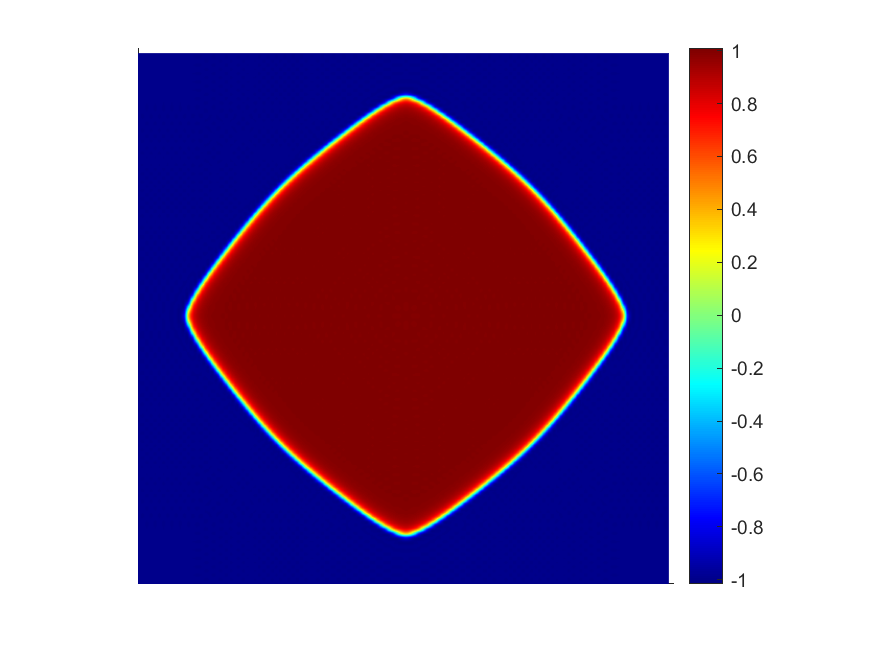}}
		\end{minipage}
	\begin{minipage}[t]{0.24\linewidth}
			\centerline{\includegraphics[scale=0.28]{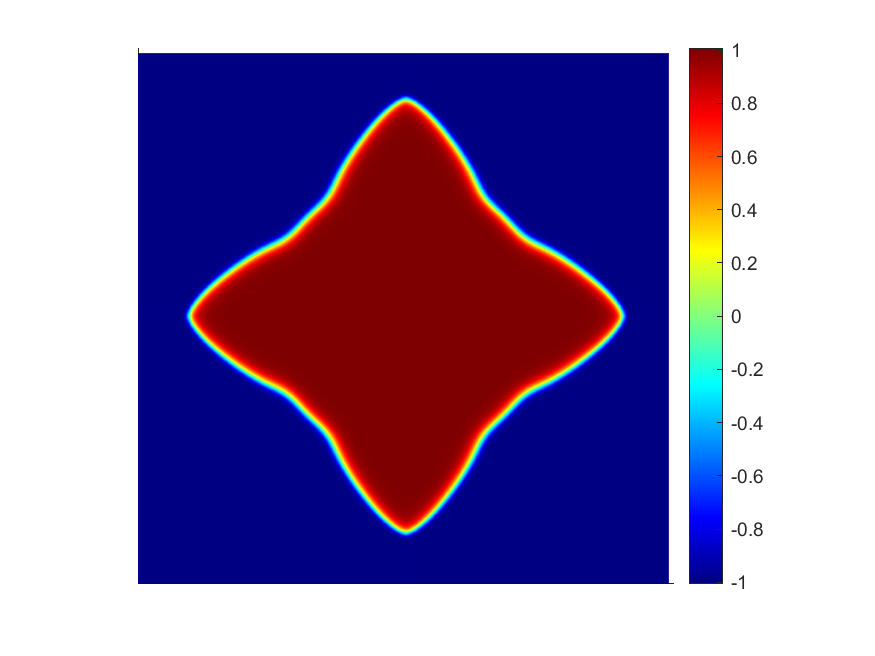}}
		\end{minipage}
	\begin{minipage}[t]{0.24\linewidth}
			\centerline{\includegraphics[scale=0.28]{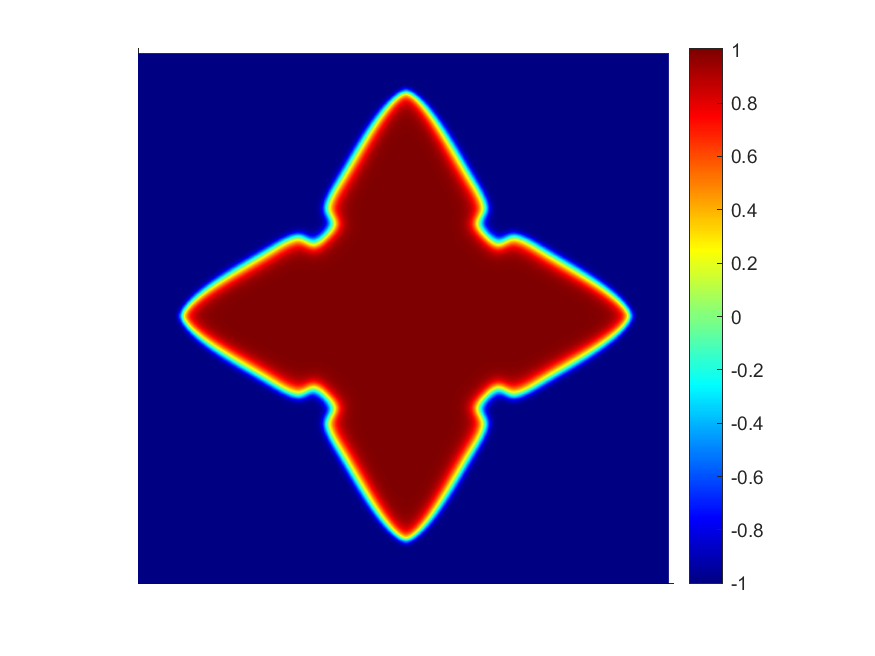}}
		\end{minipage}
	\begin{minipage}[t]{0.24\linewidth}
			\centerline{\includegraphics[scale=0.28]{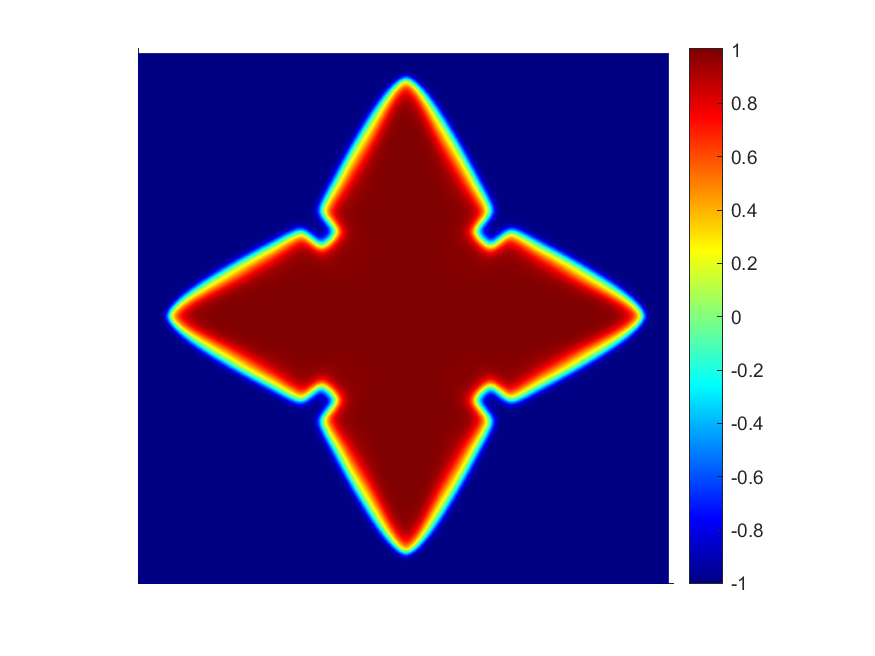}}
		\end{minipage}
	\centerline{(a) 2D cross-section $\phi(\cdot,\cdot,\pi)$}
	\vskip 3mm
	\begin{minipage}[t]{0.24\linewidth}
			\centerline{\includegraphics[scale=0.28]{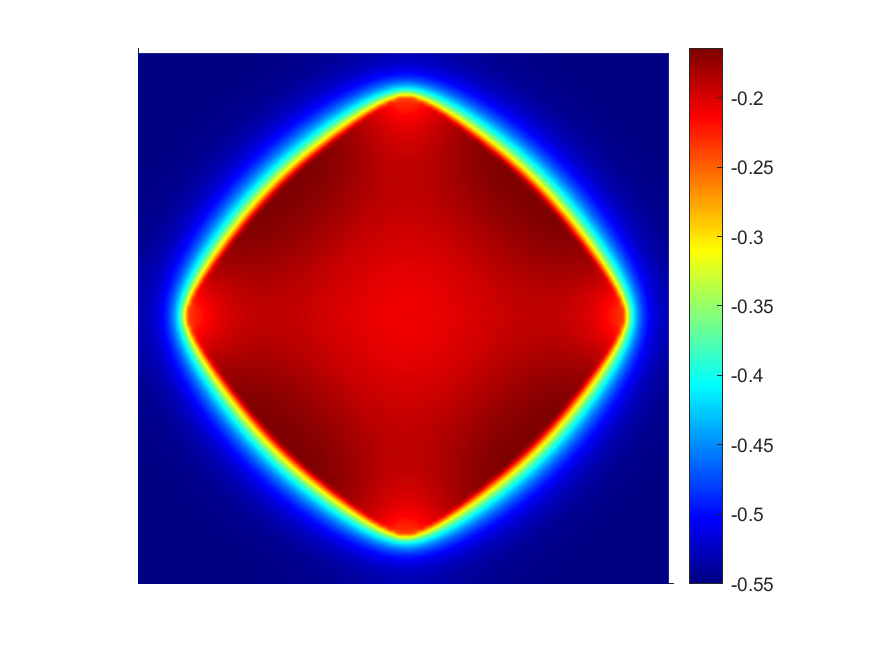}}
		\end{minipage}
	\begin{minipage}[t]{0.24\linewidth}
			\centerline{\includegraphics[scale=0.28]{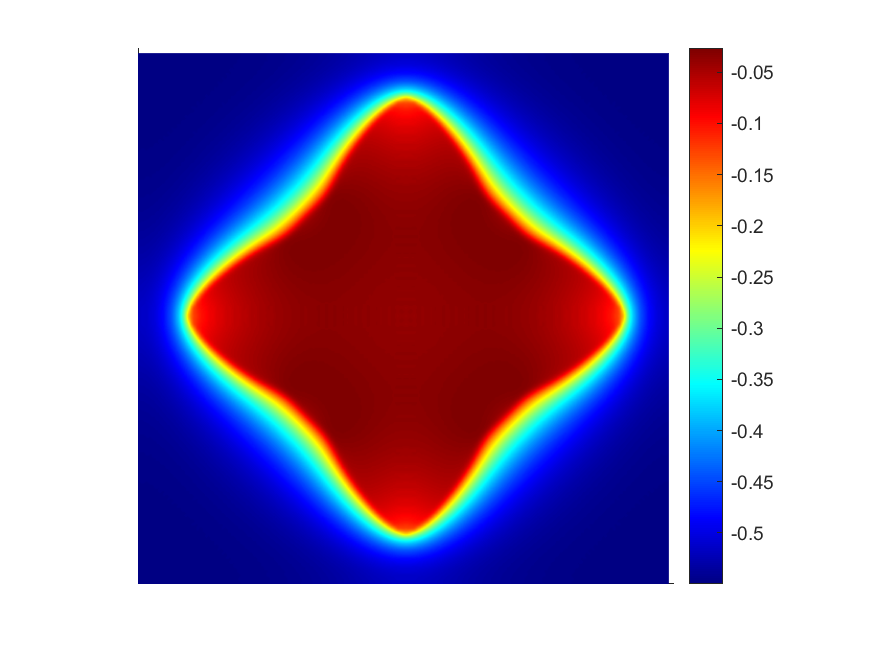}}
		\end{minipage}
	\begin{minipage}[t]{0.24\linewidth}
			\centerline{\includegraphics[scale=0.28]{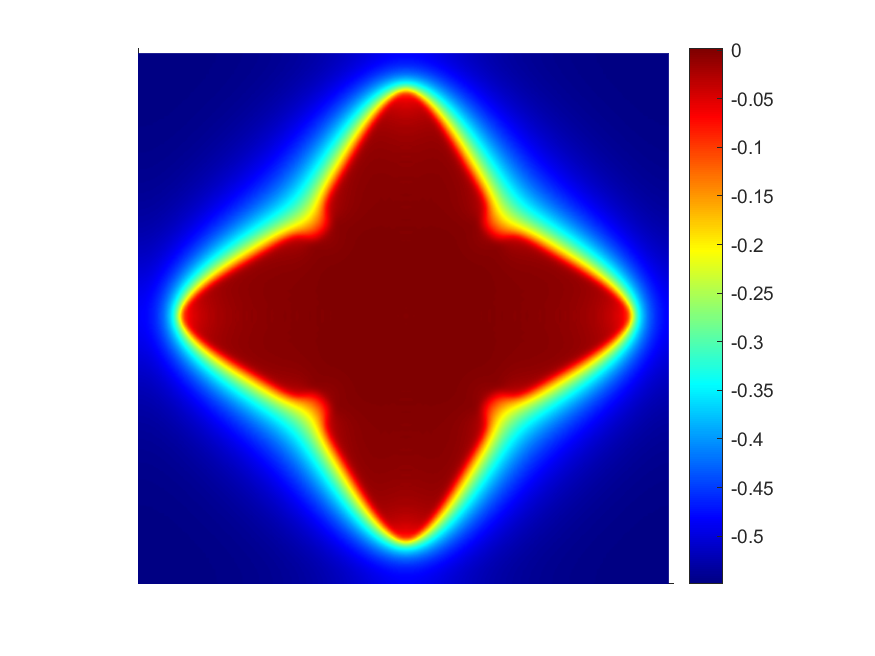}}
		\end{minipage}
	\begin{minipage}[t]{0.24\linewidth}
			\centerline{\includegraphics[scale=0.28]{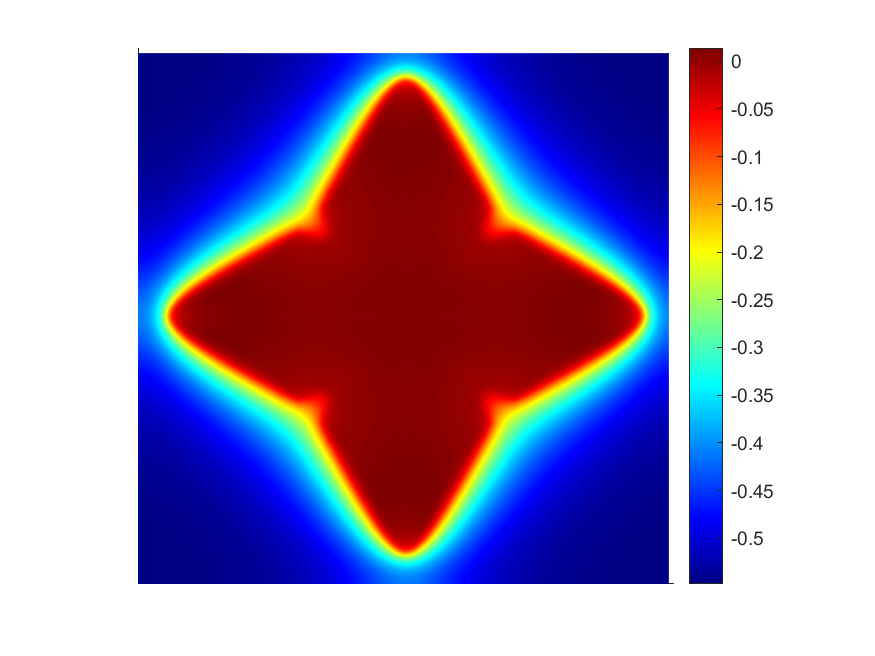}}
		\end{minipage}
	\centerline{(b) 2D cross-section $T(\cdot,\cdot,\pi)$}
	\caption{Example \ref{example6}:  Phase-field and temperature profiles along the
		section $z=\pi$ of the computed solutions for $K = 0.5, 1.0, 1.5, 2.0$ from left to right. Snapshots are taken at $t=15,20,25,30$.
		}\label{fig_example6_2d}
\end{figure*}

\begin{figure*}[htbp]
	\begin{minipage}[t]{0.49\linewidth}
		\centerline{\includegraphics[scale=0.5]{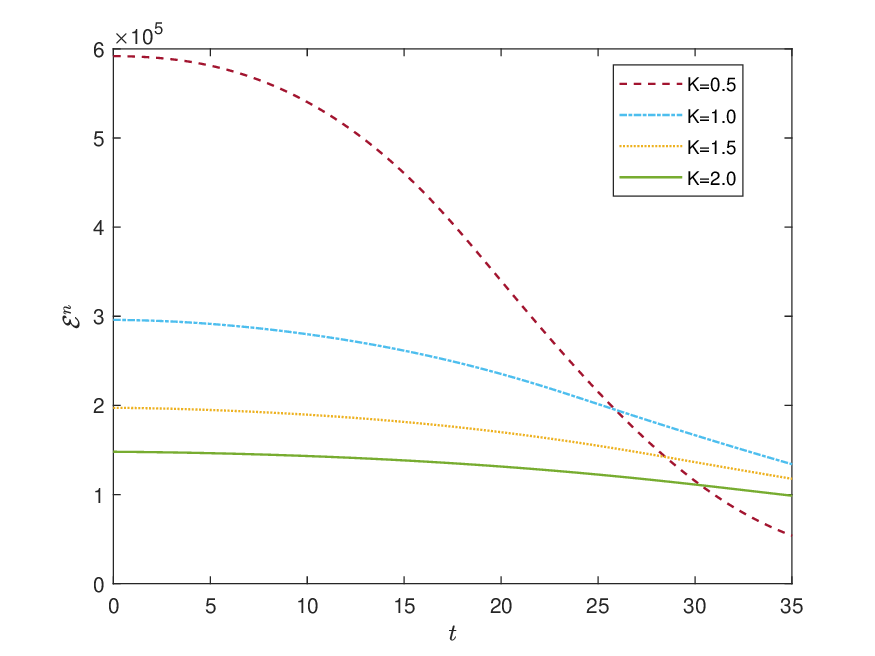}}
		\centerline{(a) Temporal behavior of $\mathcal{E}^{n}$}
	\end{minipage}
	\begin{minipage}[t]{0.49\linewidth}
		\centerline{\includegraphics[scale=0.5]{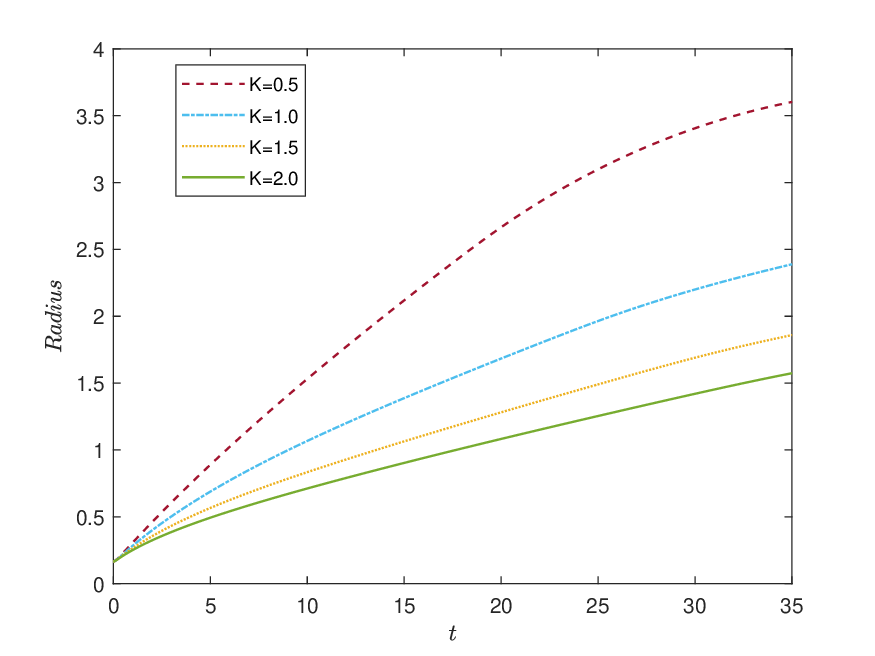}}
		\centerline{(b) Temporal evolution of the radius}
	\end{minipage}
	\caption{Example \ref{example6}:  3D temporal behavior of the modified energy and the characteristic crystal radius under different latent heat parameter $K$.
	}\label{fig_example6}
\end{figure*}

\section{Conclusion}\label{sec5}
In this work, we developed high-order numerical approximations based on matrix diagonalization for the anisotropic phase-field model of dendritic crystal growth. A series of two- and three-dimensional numerical experiments were carried out to examine the performance of the proposed method and to study the influence of several model parameters on crystal growth.
The numerical results confirm the accuracy and stability of the proposed method. Variations choices of model parameters and initial crystal configurations lead to obvious differences in dendritic morphology. Fourfold and sixfold dendritic structures were successfully reproduced, together with the interaction of multiple crystal nuclei and three-dimensional crystal growth processes.
While the numerical experiments demonstrate the effectiveness of the proposed method, several questions still require further study. In particular, a rigorous global error analysis for the anisotropic dendritic crystal growth model remains a challenging task. It would also be of interest to extend the present framework to flow-coupled phase-field models for dendritic solidification \cite{beckermann1999modeling,tong2001phase}.

\appendix

\section*{Appendix. Butcher tableaux}\label{Butcher tableaux}
For completeness, we provide the Butcher tableaux of the Gauss methods used in this paper. The corresponding coefficients are given in \cite{butcher1964implicit,ehle1969pade,tang2022arbitrarily}.

\textbf{Tableau 1.} 2-stage:
\beq
\begin{array}{cc|c}
	\dfrac{1}{4} & \dfrac{1}{4}-\dfrac{\sqrt{3}}{6} & \dfrac{1}{2}-\dfrac{\sqrt{3}}{6} \\[10pt]
	\dfrac{1}{4}+\dfrac{\sqrt{3}}{6} & \dfrac{1}{4} & \dfrac{1}{2}+\dfrac{\sqrt{3}}{6} \\[10pt]
	\hline
	\rule{0pt}{20pt} \dfrac{1}{2} & \rule{0pt}{20pt}  \dfrac{1}{2} &
\end{array}.
\eeq

\textbf{Tableau 2.} 3-stage:
\beq
\begin{array}{ccc|c} 
	\dfrac{5}{36} & \dfrac{2}{9}-\dfrac{\sqrt{15}}{15} & \dfrac{5}{36}-\dfrac{\sqrt{15}}{30} & \dfrac{1}{2}-\dfrac{\sqrt{15}}{10} \\[10pt]
	\dfrac{5}{36}+\dfrac{\sqrt{15}}{24} & \dfrac{2}{9} & \dfrac{5}{36}-\dfrac{\sqrt{15}}{24} & \dfrac{1}{2} \\[10pt]
	\dfrac{5}{36}+\dfrac{\sqrt{15}}{30} & \dfrac{2}{9}+\dfrac{\sqrt{15}}{15} & \dfrac{5}{36} & \dfrac{1}{2}+\dfrac{\sqrt{15}}{10} \\[10pt]
	\hline
	\rule{0pt}{20pt}\dfrac{5}{18} & \rule{0pt}{20pt}\dfrac{4}{9} & \rule{0pt}{20pt}\dfrac{5}{18} &
\end{array}.
\eeq

\textbf{Tableau 3.} 4-stage:
\beq
\begin{array}{cccc|c}
	\omega_1 & \omega_1'-\omega_3+\omega_4' & \omega_1'-\omega_3-\omega_4' & \omega_1-\omega_5 & \dfrac{1}{2}-\omega_2 \\[10pt]
	\omega_1-\omega_3'+\omega_4 & \omega_1' & \omega_1{ }'-\omega_5{ }' & \omega_1-\omega_3{ }'-\omega_4 & \dfrac{1}{2}-\omega_2' \\[10pt]
	\omega_1+\omega_3'+\omega_4 & \omega_1'+\omega_5' & \omega_1' & \omega_1+\omega_3'-\omega_4 & \dfrac{1}{2}+\omega_2' \\[10pt]
	\omega_1+\omega_5 & \omega_1'+\omega_3+\omega_4' & \omega_1'+\omega_3-\omega_4' & \omega_{1} & \dfrac{1}{2}+\omega_2 \\[8pt]
	\hline 	
	\rule{0pt}{12pt}2\omega_1 & \rule{0pt}{12pt}2 \omega_1{ }' & \rule{0pt}{12pt}2 \omega_1{ }' & \rule{0pt}{12pt}2 \omega_1 &
\end{array},
\eeq
where 
\begin{equation*}
\begin{aligned}
	&\omega_1 =\tfrac{1}{8}-\tfrac{\sqrt{30}}{144},\ \omega_1'=\tfrac{1}{8}+\tfrac{\sqrt{30}}{144},\ \omega_2=\tfrac{1}{2}\sqrt{\tfrac{15+2\sqrt{30}}{35}},\ \omega_2'=\tfrac{1}{2}\sqrt{\tfrac{15-2\sqrt{30}}{35}},\\
	&\omega_3=\omega_2\left(\tfrac{1}{6}+\tfrac{\sqrt{30}}{24}\right),\ \omega_3'=\omega_2'\left(\tfrac{1}{6}-\tfrac{\sqrt{30}}{24}\right),\ 
	\omega_4=\omega_2\left(\tfrac{1}{21}+\tfrac{5\sqrt{30}}{168}\right),\\
	&\omega_4'=\omega_2'\left(\tfrac{1}{21}-\tfrac{5\sqrt{30}}{168}\right),\
	\omega_5=\omega_2-2\omega_3,\omega_5'=\omega_2'-2\omega_3'.
\end{aligned}
\end{equation*}

\vspace{2mm}
\textbf{Tableau 4.} 5-stage:
\beq
\begin{array}{ccccc|c}
	\eta_1 & \eta_1'-\eta_3+\eta_4' & \eta_8-\eta_5 & \eta_1'-\eta_3-\eta_4' & \eta_1-\eta_6 & \dfrac{1}{2}-\eta_2 \\[10pt]
	\eta_1-\eta_3'+\eta_4 & \eta_1' & \eta_8-\eta_5' & \eta_1'-\eta_6' & \eta_1-\eta_3'-\eta_4 & \dfrac{1}{2}-\eta_2' \\[10pt]
	\eta_1+\eta_7 & \eta_1'+\eta_7' & \eta_8 & \eta_1'-\eta_7' & \eta_1-\eta_7 & \dfrac{1}{2} \\[10pt]
	\eta_1+\eta_3'+\eta_4 & \eta_1'+\eta_6' & \eta_8+\eta_5' & \eta_1' & \eta_1+\eta_3'-\eta_4 & \dfrac{1}{2}+\eta_2' \\[10pt]
	\eta_1+\eta_6 & \eta_1'+\eta_3+\eta_4' & \eta_8+\eta_5 & \eta_1'+\eta_3-\eta_4' & \eta_1 & \dfrac{1}{2}+\eta_2 \\[8pt]
	\hline 	
	\rule{0pt}{12pt}2\eta_1 & \rule{0pt}{12pt}2\eta_1' & \rule{0pt}{12pt}2\eta_8 & \rule{0pt}{12pt}2\eta_1' & \rule{0pt}{12pt}2\eta_1 &
\end{array},
\eeq
where 
\begin{equation*}
	\begin{aligned}
		&\eta_1 = \tfrac{322 - 13\sqrt{70}}{3600},\ \eta_1' = \tfrac{322 + 13\sqrt{70}}{3600},\ \eta_2 = \tfrac{1}{2}\sqrt{\tfrac{35 + 2\sqrt{70}}{63}},\ \eta_2' = \tfrac{1}{2}\sqrt{\tfrac{35 - 2\sqrt{70}}{63}}, \\
		&\eta_3 = \eta_2\left(\tfrac{452 + 59\sqrt{70}}{3240}\right),\ \eta_3' = \eta_2'\left(\tfrac{452 - 59\sqrt{70}}{3240}\right),\ \eta_4 = \eta_2\left(\tfrac{64 + 11\sqrt{70}}{1080}\right),\ \eta_4' = \eta_2'\left(\tfrac{64 - 11\sqrt{70}}{1080}\right), \\
		&\eta_5 = 8\eta_2\left(\tfrac{23 - \sqrt{70}}{405}\right),\ \eta_5' = 8\eta_2'\left(\tfrac{23 + \sqrt{70}}{405}\right),\ \eta_6 = \eta_2 - 2\eta_3 - \eta_5,\ \eta_6' = \eta_2' - 2\eta_3' - \eta_5', \\
		&\eta_7 = \eta_2\left(\tfrac{308 - 23\sqrt{70}}{960}\right),\ \eta_7' = \eta_2'\left(\tfrac{308 + 23\sqrt{70}}{960}\right),\ \eta_8=\tfrac{32}{225}.
	\end{aligned}
\end{equation*}

\bibliographystyle{unsrt}
\bibliography{reference}
\end{document}